\def\maxwidth{ %
  \ifdim\Gin@nat@width>\linewidth
    \linewidth
  \else
    \Gin@nat@width
  \fi
}
\definecolor{fgcolor}{rgb}{0.345, 0.345, 0.345}
\definecolor{shadecolor}{rgb}{.97, .97, .97}
\definecolor{messagecolor}{rgb}{0, 0, 0}
\definecolor{warningcolor}{rgb}{1, 0, 1}
\definecolor{errorcolor}{rgb}{1, 0, 0}
\newtheorem{theorem}{Theorem}
\theoremstyle{condition}
\newtheorem{condition}{Condition}
\theoremstyle{definition}
\newtheorem{definition}{Definition}
\theoremstyle{remark}
\newtheorem{remark}{Remark}
\theoremstyle{corollary}
\newtheorem{corollary}{Corollary}
\theoremstyle{lemma}
\newtheorem{lemma}{Lemma}
\theoremstyle{proposition}
\newtheorem{proposition}{Proposition}
\theoremstyle{assumption}
\theoremstyle{comment}
\theoremstyle{acknowledgments}
\title{Minimax rates of convergence for nonparametric location-scale models} 
\date{}
\author[a]{Bingxin Zhao}
\author[a]{Yuhong Yang}
\affil[a]{School of Statistics, University of Minnesota, 313 Ford Hall, 224 Church St SE, Minneapolis, USA}
\begin{document}

\maketitle
\vspace{-1.5cm}

\begin{abstract}
	This paper studies minimax rates of convergence for nonparametric  location-scale models, which include mean, quantile and expectile regression settings. Under Hellinger differentiability on the error distribution and other mild conditions, we show that the minimax rate of convergence for estimating the regression function under the squared $L_2$ loss is determined by the metric entropy of the nonparametric function class. Different error distributions, including asymmetric Laplace distribution, asymmetric connected double truncated gamma distribution, connected normal-Laplace distribution, Cauchy distribution and asymmetric normal distribution are studied as examples. Applications on low order interaction models and multiple index models are also given.
\end{abstract}

$\textbf{\large Keywords:}$ quantile regression, expectile regression, minimax, metric entropy.

\newpage
\section{Introduction}
Nonparametric regression is a widely used tool to characterize relationship between a response variable and the predictors. In many economic and related applications, the random errors often exhibit heavy tail behaviors and regression methods under normal or other light-tail error distributions may lead to overly optimistic conclusions via the poorly estimated regression relationship. For instance, quantile and expectile regressions are frequently applied for estimation of Value-at-risk (VaR) and expected shortfall (ES), which are standard risk measures for financial institutions (see, e.g., \cite{Taylor}, \cite{Gaglianone}, \cite{Bayer},  and \cite{Yongqiao}). By definition, VaR and ES attempt to quantify extreme losses (in somewhat different ways). As such, they can be very sensitive to the tail behaviors of the error distribution. Obviously, overly optimistic VaR and ES estimates do not serve the intended purpose of preventing financial disasters. In this background, it is important to study nonparametric regression under heavy-tailed error distributions. Our goal in this work is to provide a theoretical understanding on intrinsically how well one can estimate a target regression function in a general nonparametric location-scale framework. Below, we briefly review quantile and expectile regressions, and discuss the lack of a general understanding in the existing literature of nonparametric regression from perspectives of minimax rates of convergence especially when the regression error has a heavy-tail distribution (e.g., Cauchy), which motivates our work. 

\subsection{Quantile regression}
Quantile regression (QR), initiated by \cite{RogerKoenker}, is naturally associated with an asymmetric absolute loss function $\rho_\tau(y)=y(\tau-I(y<0))$ (also known as check loss), where $0<\tau<1$ is the quantile level. With the quantile level ranging from 0 to 1, QR allows one to make inference on the entire conditional distribution,  which goes much beyond the conditional mean/median regression. 

Linear QR has been researched extensively. \cite{RogerKoenker}, \cite{Pollard} and \cite{Chernozhukov} considered the asymptotic behaviors of linear estimators. Penalized linear QR has also been investigated by \cite{LanWang} and \cite{Kato}. 

To alleviate restrictiveness of linear modeling,  semiparamtric and nonparametric models have been studied in the literature. \cite{XumingHe} and \cite{Lee} studied the partially linear additive model and investigated the asymptotic properties of the proposed estimators. \cite{Sherwood} considered nonconvex penalized estimators with the oracle property under high dimensional partially linear additive QR. 

Moreover, high dimensional additive QR has been examined by \cite{Horowitz2}, \cite{ShaogaoLv} and \cite{Kato}. Specifically, \cite{Horowitz2} proposed a nonparametric additive estimator with the oracle property. \cite{ShaogaoLv} proposed a two-fold LASSO-type regularized learning approach in sparse additive QR in a reproducing kernel Hilbert space (RKHS) framework. \cite{Kato} analyzed the group LASSO estimator under a high dimensional additive setting. \cite{TracyWu} studied the single-index QR, in which the unknown link function is estimated by local linear QR. \cite{KejiaShan} advocated model combination for QR and proposed the adaptive quantile regression by mixing (AQRM) to average parametric and/or nonparametric QR methods so as to achieve their best performance. 

\subsection{Expectile regression}
Expectile regression (ER), initiated by \cite{Aigner} and further developed by \cite{Newey}, also offers an approach to understanding the full range of conditional distribution of the response given the covariates. ER is based on minimizing asymmetrically weighted square residuals, using the asymmetric square loss $l_\phi(y)=y^2\lvert \phi-I(y<0)\rvert$ at the expectile level $\phi\in(0,1)$.

There are strengths and weaknesses of ER compared with QR as discussed in the literature. Since the asymmetric square loss (ASL) is differentiable everywhere while the check loss is nondifferentiable at zero, ER with the ASL	 can be computationally more advantageous than QR based on the check loss (\cite{Newey}). 

On the other hand, the ASL is sensitive to outliers, while the check loss is more robust. Also, expectiles lack an intuitive interpretation. 

Linear ER has been well studied. \cite{Newey} and \cite{Efron} pioneered the investigation of asymmetric least squares (ALS). \cite{YuwenGu} and \cite{QFXu} studied penalized linear ER. 

Without the restriction of linearity of the regression function, \cite{YaoQiwei}, \cite{Sobotka}, \cite{MengmengGuo}, \cite{Adam} and \cite{JunZhao2} investigated semiparametric or nonparametric ER. \cite{YiYang2} adopted the regression tree based gradient boosting estimator by \cite{Friedman}. \cite{YiYang} discussed the nonparametric multiple regression in the RKHS framework and proposed an estimator based on majorization-minimization (MM) algorithm. \cite{Spiegel2} extended the single index model (\cite{Ichimura}) to ER and proposed an algorithm to estimate the nonlinear function and covariates jointly. \cite{YuwenGu3} used the exponential weighting scheme to aggregate the different ER estimators and derived oracle inequalities for the aggregated estimator. \cite{CuixiaJiang} developed the expectile regression neural network (ERNN), which is estimated through standard gradient based optimization algorithms. 

\subsection{Existing minimax results on QR, ER and related problems}
The aforementioned asymptotic results on quantile and expectile regressions in the literature are mostly on risk upper bounds or pointwise asymptotic performance for specific QR and ER methods. Indeed, very little has been done on minimax QR and ER.

\cite{Horowitz} solved an "ill-posed" inverse problem of nonparametric instrumental variables estimator of QR, and derived an upper bound of optimal convergence rate in probability, which depends on the severity of the "ill-posed" inverse problem and the complexity of nonparametric estimator.  \cite{Kenko} considered functional QR, where the dependent variable is a scalar and the covariate is a function. He considered an estimator for the slope function by the principal component analysis, and derived an upper bound of minimax rate of convergence under some smoothness assumptions on the covariance kernel of the covariate and the slope function. \cite{XiaohongChen2} considered an "ill-posed" inverse problem under both nonparametric indirect regression (NPIR) and nonparametric instrumental variables regression (NPIV). They established an informative minimax lower bound under some approximation number and the link conditions. \cite{XiaohongChen} derived an upper bound for the convergence rate of a penalized sieve minimum distance (PSMD) estimator under the nonparametric additive quantile instrumental variable regression model, and showed the resulting convergence rate coincides with the known optimal rate. \cite{ShaogaoLv} proposed a two-fold LASSO-type regularized learning approach for sparse additive QR in a high dimensional RKHS framework. They derived an upper bound of convergence rate of the proposed estimator, which attains the minimax lower bound (up to a logarithmic factor in the ambient dimension) for sparse additive mean regression in \cite{Raskutti}.

\cite{Muhammad} explored the use of kernel-based regularized empirical risk minimization methods for ER. They considered the Gaussian kernels and ALS loss function, and derived an upper bound of convergence rate. Also, the established rate is minimax optimal up to a polynomial order of logarithmic factor of the sample size $n$.

In summary, while various interesting results have been obtained, there are no minimax results that hold for general quantile or expectile function classes. In fact, to our knowledge, no formal risk lower bounds have been established for ER and QR under a general non-normal error distribution. In particular, it is unknown if a severely heavy tailed distribution such as Cauchy would lead to a slower minimax rate of convergence for the estimation of a quantile function. The objective of this paper is to offer such a theory.

\subsection{Contribution and organization of this paper}
This work studies the minimax rate of convergence for nonparametric regression in a location-scale framework in terms of the metric entropy of the class of regression functions of interest. Metric entropy is known to be the key quality that determines the minimax rate of convergence in mean regression, see, e.g, \cite{Yang} for key results and references. The error distribution of QR and ER models typically does not follow normality as considered in, e.g, \cite{Yang} and \cite{XiaohongChen2}, and may have much heavier and asymmetric tails. The main contribution of our paper is that we derive the minimax rates of convergence precisely under the squared $L_2$ distance for a general class of regression functions, under mild conditions. In particular, our results are applicable for QR and ER with asymmetric Laplace distribution, asymmetric connected double truncated gamma distribution, connected normal-Laplace distribution, Cauchy distribution (for QR) and asymmetric normal distribution. 

Since metric entropy orders are known (or can be derived) for most of familiar function classes, our results readily enable the determination of minimax rates of convergence for QR and ER in the nonparametric location-scale framework. It is helpful to note that in previous work on convergence of QR and ER methods, minimax lower rates for mean regression under normal errors are typically used to justify optimality of the proposed methods (sometimes with gaps between upper and lower rates in logarithmic factors in the sample size). This does not seem to be a generally rigorous argument, especially for heavy-tail error distributions such as Cauchy that does not even have the first moment. Our work provides optimal minimax lower rates that are valid for general error distributions under minor conditions. Interestingly, it turns out that heavy-tailedness of the regression error does not damage the minimax rate of convergence for regression estimation. For example, for estimating a quantile function in a Lipschitz class on $[0,1]^d$ with smoothness parameter $\alpha >0$, the minimax rate of convergence under squared $L_2$ loss is $n^{-2\alpha/(2\alpha +d)}$, whether the error is Gaussian, double-exponential or Cauchy. Therefore, while the long tail of Cauchy is certainly likely to produce a significant number of outliers, they do not affect the rate of convergence if the regression function is estimated properly (see \Cref{non_normal_lower_rates} for more discussions).

The rest of this article is organized as follows. In \cref{problem_setup and some preliminaries}, we set up the problem with some useful definitions. In \cref{main_results}, we present the general results on minimax rates for location-scale models including nonparametric QR and ER. \Cref{error_qr} examines the examples of the asymmetric Laplace error distribution, the asymmetric connected double truncated gamma error distribution, the Cauchy distribution and the connected normal-Laplace error distribution. \Cref{error_er} considers the asymmetric normal error distribution for ER. In \cref{examples}, we provide applications on low order interaction models and multiple index models. \Cref{feasible_estimator} discusses several important issues and considers a feasible estimator with (near) minimax optimality. \Cref{conclusion} concludes our paper. The proofs of the main results are given in the \Cref{appendix} and Supplementary Materials.

\section{Problem setup}\label{problem_setup and some preliminaries}
In this paper, in a nonparametric location-scale regression framework, we unify mean, quantile, expectile regressions and their generalizations. In the following content, when comparing nonnegative sequences, we will use the symbols $\succeq$ and $\asymp$, where $k_n\succeq s_n$ is the same as $s_n=O(k_n)$ and $k_n\asymp s_n$ means $k_n\succeq s_n$ and $k_n\preceq s_n$.
\subsection{Location-scale models}
We assume the regression model:
\begin{equation}\label{eq:1}
	Y_i=\mu(X_i)+\sigma(X_i)\epsilon_i, i=1,2,\cdots,n,
\end{equation}
where $Y_i$ is a real valued response random variable, $\mu(x)$ is the regression function, $\sigma (x)$ is a scale function that, together with $\epsilon$, determine the heterogeneous random noise. Here $\epsilon_i$ has a distribution with known density $f$ with respect to the Lebesgue measure, which may have a heavy tail (e.g., without a finite mean), and the covariate $X_i$ and $\epsilon_i$ are independent. The observations $(X_i,Y_i)_{i=1}^n$ are i.i.d. from a distribution with a generic copy $(X, Y)$, and the explanatory variable $X\in {\cal{X}}$ has distribution $P$ with density $h$ belonging to a density class $\mathcal{H}$. The regression function $\mu$ is assumed to be in a nonparametric function class $\mathcal{U}$, where $\mathcal{U}$ satisfies $\underset{\mu\in \mathcal{U}}{\sup}\lVert \mu\rVert_\infty<\infty$. The scale function $\sigma$ is assumed to be in some function class $\Sigma$ with lower and upper bounds: there exists a fixed $\overline{\sigma}>1$ and $\underline{\sigma}=\overline{\sigma}^{-1}$ such that $\underline{\sigma}\le \sigma(x)\le \overline{\sigma}$ for all $x\in \cal{X}$. For example, $\Sigma = \{ \exp{\{\zeta (x; \nu)}\}: \nu\in \Psi\}$, where $\zeta (x; \nu)$ is of a parametric form (e.g., linear in the covariates) and  $\Psi$ is a compact parameter space. 

We call  \eqref{eq:1} a location-scale regression model. It specializes to different types of regression depending on the property of $f$: e.g., if $f$ has mean 0, then \eqref{eq:1} is a usual mean regression model; if $f$ has $\tau$-th quantile equal 0, then \eqref{eq:1} corresponds to a QR model at level $\tau$; if $f$ has $\tau$-th expectile equal 0, then \eqref{eq:1} is an ER model. More details are given below.

\subsection{Mean regression}
If the error $\epsilon$ has mean 0, the location-scale model \eqref{eq:1} corresponds to a mean regression model with heteroscedastic error. Here the regression function is the conditional mean of $Y$ given $X=x$:
\begin{equation*}
	\mu(x)=\mathbb E(Y|X=x).
\end{equation*}
	Note that $\mu(x)$ is well defined if $\mathbb{E}(|Y||X=x)$ exists almost surely. However, if an estimator is obtained by least squares, a finite second moment of $Y$ given $X=x$ is typically required.

\subsection{Quantile regression}

If the error $\epsilon$ has a distribution with the $\tau$-th quantile being 0,  \eqref{eq:1} is just a QR model. The regression function $\mu (x)=\mu_\tau (x)$ is the $\tau$-th conditional quantile of $Y$ given $X=x$, which is defined as
\begin{equation*}
	\mu(x)=F_{Y}^{-1}(\tau|x)=\inf\left\{ y:F_{Y}(y|X=x)\geq\tau\right\},
\end{equation*}
where $F_{Y}(y|x)$ denotes the cumulative distribution function (CDF) of $Y$ given $X=x$.
Note that $\mu(x)$ is defined based on the inverse of the CDF, without any requirement of moment conditions of $Y|X=x$. However, when using the check loss $\rho_\tau(y)=y(\tau-I(y<0))$ to obtain a QR estimator, the existence of $E(|Y| |X=x)$ is required.

\subsection{Expectile regression}
If the error $\epsilon$ has a distribution with the $\phi$-th expectile equal 0 ($0<\phi<1$), \eqref{eq:1} corresponds to an ER model. The expectile regression function is the $\phi$-th conditional expectile of $Y$ given $X=x$:
\begin{equation*}
	\mu(x)=\left\{z:\int_{-\infty}^{z}(z-y)dF_{Y}(y|X=x)=\phi \int_{-\infty}^\infty\lvert z-y\rvert dF_{Y}(y|X=x)\right\}.
\end{equation*}
When $\phi=1/2$, expectile regression reduces to the mean regression.
Note that $\mu(x)$ is well defined if $\mathbb{E}(|Y||X=x)<\infty$. However, a finite second moment of $Y$ given $X=x$ is required in order to obtain the ER estimator using asymmetric least squares (ALS) based on the asymmetric square loss $l_\phi(y)=y^2\lvert \phi-I(y<0)\rvert$.

\subsection{Momentile and $\psi$-tile regressions} \label{psi-tile}
Expectile regression can be generalized to momentile regression. Let $k\geq 1$ be an integer and assume the $k$-th moment of the conditional distribution of $Y$ given $X=x$ exists for $x\in \cal{X}$. Then the $k$-th momentile regression function is the $\phi$-th conditional momentile of $Y$ given $X=x$ as defined by
\begin{equation*}
	\mu(x)= \left\{z:\int_{-\infty}^{z}(z-y)^kdF_{Y}(y|X=x)=\phi \int_{-\infty}^\infty \lvert z-y\rvert^k dF_{Y}(y|X=x)\right\}.
\end{equation*}
For $k$-momentile regression at level $\phi$, the error $\epsilon$ in (\ref{eq:1}) is assumed to have the $\phi$-th $k$-momentile equal 0. Similar to the case of expectile, when existing, there is a unique choice of $z$ that satisfies the integral equation above.  

The concept can be further generalized to facilitate e.g., robust regression under the Huber loss (to be discussed soon). Let $\psi: [0, \infty )\rightarrow [0, \infty )$ be a non-negative function satisfying $\int_{-\infty}^{\infty} \psi(|t-a|)f(t)dt<\infty$ for all $a\in \mathbb{R}$.  Then the $\psi$-tile regression function is the $\phi$-th conditional $\psi$-tile of $Y$ given $X=x$ as defined by
\begin{equation*}
	\mu_\phi (x;\psi)=\inf \left\{z:\int_{-\infty}^{z} \psi(z-y)dF_{Y}(y|X=x)=\phi \int_{-\infty}^\infty \psi(\lvert z-y\rvert) dF_{Y}(y|X=x)\right\}.
\end{equation*}
For $\psi$-tile regression at level $\phi$, the error $\epsilon$ is assumed to have the $\phi$-th $\psi$-tile equal 0.
Note that when $\psi=1$, $\psi$-tile regression reduces to the quantile regression. Similar to the case of quantile, the set of $z$ satisfying the above integral equality may not be unique and we choose their infimum for $\psi$-tile to be well-defined. For convenience, we will use $\psi$-tile regression as a general term to include quantile regression, mean regression, expectile regression, or regression under another sensible choice of $\psi$.

A non-standard choice of $\psi$ than those for the familiar quantile and expectile regressions can be useful. For instance, let $\psi_H(x)=2\min (x,1)$, which corresponds to the (differentiable) Huber loss function $x^2I_{\{0\leq x\leq 1\}}+(2x-1)I_{\{x>1\}}$. Then the resulting Huber-tile regression emphasizes robustness to outliers compared to the expectile regression while maintaining the computational advantage of expectile over quantile regression. If the distribution of $\epsilon$ is symmetric about 0, then the $1/2$-th Huber-tile of $Y$ given $x$ is simply the same as the median (and mean, when existing) regression function. But Huber-tile regression is oriented towards robustness to outliers. 

The $\psi$-tile regression at level $\phi$ corresponds to the loss function $L(y,\hat{y})=J(y-\hat{y})$, where $J(z)=\phi \int_0^z \psi(t)dt I_{\{z\geq 0\}} +(1-\phi) \int_0^{|z|} \psi(t)dt I_{\{z< 0\}}$. It yields the check loss and the asymmetric square loss with $\psi=1$ and $\psi(x)=x$ respectively. For Huber-tile $\psi_H$, we have $J(z)=\phi \big(z^2I_{\{0\le z\le 1\}}+(2z-1)I_{\{z>1\}})\big)+(1-\phi)\big(z^2I_{\{-1\le z< 0\}}-(2z+1)I_{\{z<-1\}}\big)$.

Based on model (\ref{eq:1}) with a general error distribution of density $f$, the conditional $\psi$-tile of $Y$ given $X=x$, when existing, is of the form
\begin{equation} 
\mu(x)+\sigma(x) t_{\phi, \psi}(f), \label{psi-tile}
\end{equation}
where $t_{\phi, \psi}(f)$ denotes $\phi$-th $\psi$-tile of the error distribution with density $f$. Therefore, in the framework of location-scale model, the models for quantile, expectile, momentile, or $\psi$-tile, when existing, are closely related in terms of a simple shifting. For instance, if we assume model (\ref{eq:1}) with $\tau$-th quantile of the distribution of $\epsilon$ being zero, then the $\tau$-th conditional quantile function of $Y$ given $X=x$ is just $\mu(x)$, but the  $\phi$-th conditional $\psi$-tile is $\mu(x)+\sigma(x) t_{\phi, \psi}(f)$. 

\subsection{Minimax risk and metric entropy}
Let $\left\|{\mu-\upsilon}\right\|^2_{2,h}=\int (\mu(x)-\upsilon(x))^2h(x)dx$ be the squared $L_2(h)$ distance with respect to the measure induced by X. To assess estimation performance, we use the squared $L_2$ loss.

\begin{definition}
(Minimax risk) The minimax risk of estimating $\mu \in \mathcal{U}$ for the location-scale model under the squared $L_2$ loss is
\begin{equation*}
R_n=\underset{\hat{\mu}}{\inf} \underset{\mu\in \mathcal{U},\sigma\in\Sigma, h\in \mathcal{H}}{\sup}\mathbb{E}\lVert\mu-\hat{\mu}\rVert_{2,h}^2,
\end{equation*}
where $\hat\mu$ is over all possible estimators of $\mu$ based on the data.
\end{definition}

It is well-known that metric entropy plays a fundamental role in determining the minimax rate of convergence in statistical models (e.g, \cite{LeCam}, \cite{Birge}, \cite{Yatracos} and \cite{Yang} for results and more historical references). Consider a totally bounded function class $\mathcal{U}$ under the $L_2(h)$ distance.

\begin{definition}
(Packing $\varepsilon$-entropy) A finite set $N_\varepsilon \subset \mathcal{U}$ is said to be an $\varepsilon$-packing set in $\mathcal{U}$ with separation $\varepsilon>0$, if for any functions $\mu,\mu^\prime \in N_\varepsilon, \mu \ne \mu^\prime$, we have $\left\|{\mu-\mu^\prime}\right\|_{2,h}>\varepsilon$. The logarithm of the maximum cardinality of $\varepsilon$-packing sets is called the packing $\varepsilon$-entropy of $\mathcal{U}$ in the $L_2(h)$ distance, and is denoted $M(\varepsilon;\mathcal{U})$.
\end{definition}

In this paper, we are interested in nonparametric regression under location-scale model. For a nonparametric class $\mathcal{U}$, it usually satisfies the following richness property.
\begin{definition}
(Richness) A function class $\mathcal{U}$ has the richness property, if for some $0<\alpha<1$, it satisfies
\begin{equation*}
\begin{aligned}
\lim \underset{\varepsilon \to 0}{\inf} \frac{M(\alpha \varepsilon)}{M(\varepsilon)}>1,
\end{aligned}
\end{equation*}
where $M(\varepsilon)$ is a presumed known order of $M(\varepsilon;\mathcal{U})$.
\end{definition}

Note that for nonparametric function classes, the metric entropy order is typically $(\frac{1}{\varepsilon})^{c_1}(\log \frac{1}{\varepsilon})^{c_2}$ with $c_1>0$ and $c_2\in \mathbb{R}$. It clearly satisfied the richness condition.

\begin{definition}
	(Order regularity) A function class is (metric entropy) order regular if for every constant $c>0$, $M(\varepsilon)$ and $M(c\varepsilon)$ are of the same order, i.e.,
	\begin{equation*}
		M(\varepsilon) \asymp M(c\varepsilon), \text{as~} \varepsilon\to 0,
	\end{equation*}
	where $M(\varepsilon)$ is a presumed known order of $M(\varepsilon;\mathcal{U})$.
\end{definition}

Recall the definitions of Kullback-Leibler (K-L) divergence and Hellinger distance between two probability densities $q_1(x)$ and $q_2(x)$ with respect to a $\sigma$-finite measure $\nu$:  $D(q_1||q_2)=\int q_1(x)\log\frac{q_1(x)}{q_2(x)}d\nu$ and $d_H(q_1,q_2)=\Big (\int(\sqrt{q_1(x)}-\sqrt{q_2(x)})^2d\nu \Big)^{\frac{1}{2}}$, respectively.

\section{Main results}\label{main_results}
Recall $f(y)$ is the probability density function of the error $\epsilon_i$ with respect to the Lebesgue measure in the location-scale model \eqref{eq:1}. Clearly, conditions on $f$ are needed to derive the minimax rate of convergence for estimating $\mu$. Consider the location-scale family
\begin{equation}\label{loc_sca}
	\left\{ \frac{1}{\sigma}f\left(\frac{y-\eta}{\sigma}\right),\eta\in \mathbb{R}, \sigma\in (0,+\infty)\right\},
\end{equation}
and let $f_{\eta,\sigma}$ denote the density. We assume $D(f_{0,1}||f_{\eta,\sigma})<\infty$ for $\eta\in \mathbb{R}$ and $\sigma\in (0,\infty)$ and it is continuous in $(\eta,\sigma)$.

\begin{condition}\label{con1}
	The K-L divergence between $f_{0,1}$ and $f_{\eta,\sigma}$ is locally upper bounded in order by $\eta^2+(1-\sigma)^2$ for $(\eta,\sigma)$ near $(0,1)$. That is,
    	\begin{equation*}
    		D(f_{0,1}||f_{\eta,\sigma})= O(\eta^2+(1-\sigma)^2),
    	\end{equation*}
    	for $(\eta,\sigma)$ near $(0,1)$.
\end{condition}
\begin{remark}
\Cref{con1} is rather mild and it is satisfied by Gaussian, double-exponential, Student's t and many other distributions, see \cite{Yang3}, \cite{GangCheng} and \cite{GangCheng2}. 
\end{remark}

Note that if $D(f_{0,1}||f_{\eta,\sigma})<\infty$ is twice differentiable at $(\eta,\sigma)=(0,1)$, then \Cref{con1} is readily seen to be satisfied.

Hellinger differentiability plays an important role in our derivation of sensible minimax upper bounds. Let $\mathcal F=\{f_\theta:\theta\in \Theta\}$ be a subset of nonnegative functions that are integrable, indexed by $\theta$ in a subset $\Theta$ of $\mathbb R^2$. In our case, it is said to be Hellinger differentiable (see \cite{Hajek}, \cite{LeCam2} and \cite{Pollard2}) at a point $\theta_0$ of $\Theta$ if the map $\theta\to \xi_\theta(y):=\sqrt{f_\theta(y)}$ is differentiable in $L_2$ norm at $\theta_0$. That is, if there exists a vector $\dot{\xi}_{\theta_0}(y)$ of square integrable functions, called the Hellinger derivative at $\theta_0$, such that
\begin{equation}
\label{2}
    \xi_{\theta}(y)=\xi_{\theta_0}(y)+(\theta-\theta_0)^\prime\dot{\xi}_{\theta_0}(y)+r_{\theta}(y),
\end{equation}
where $\left\|r_{\theta}\right\|=\sqrt{\int r_{\theta}^2(y)dy}=o(\lvert \theta-\theta_0 \rvert)$ near $\theta_0$ and $\lvert \theta-\theta_0 \rvert$ is the Euclidean distance between $\theta$ and $\theta_0$.

In particular, we say the location-scale family \eqref{loc_sca} is Hellinger differentiable at $(\eta,\sigma)=(0,1)$ if the map $(\eta,\sigma)\to \xi_{\eta,\sigma}(y):=\sqrt{f_{\eta,\sigma}(y)}$ satisfies \eqref{2} at $(\eta,\sigma)=(0,1)$. 

\begin{condition}\label{con2}
	The location-scale family \eqref{loc_sca} is Hellinger differentiable at $(\eta,\sigma)=(0,1)$ and the components of the Hellinger derivative are not linearly dependent.
\end{condition}

Besides conditions on the error distribution, we also need some constraint on the density function class $\mathcal{H}$.
\begin{condition}\label{con4}
	There exists a density $h_0\in \mathcal{H}$ and a constant $C<\infty$ such that for all $h\in \mathcal{H}$, we have
	\begin{equation*}
		\sup_{x\in \mathcal{X}}\frac{h(x)}{h_0(x)}\le C.
	\end{equation*}
\end{condition}
Let $M_{h_0}(\varepsilon;\mathcal{U})$ and $M_{h_0}(\varepsilon;\Sigma)$ denote the metric entropy of $\mathcal{U}$ and $\Sigma$ under the $L_2(h_0)$ distance, respectively.
\begin{condition}\label{con3}
	 $M_{h_0}(\varepsilon;\mathcal{U})$ is rich and regular. In addition, $M_{h_0}(\varepsilon;\Sigma)=O(M_{h_0}(\varepsilon;\mathcal{U}))$ as $\varepsilon\to 0$.
\end{condition}

Condition \ref{con3} is sensible. In real applications, often parametric families of  $\Sigma$ are considered. For instance, for the family $\Sigma = \{ \exp{\{\zeta (x; \nu)}\}: \nu\in \Psi\}$, where $\zeta (x; \nu)$ is of a parametric form and  $\Psi$ is a compact parameter space, the metric entropy of  $\Sigma$ is of order $\log (1/\varepsilon)$, which is dominated by the metric entropy order of a nonparametric class $\mathcal{U}$. See \Cref{scale_function_discussion} for more discussion on the requirement $M_{h_0}(\varepsilon;\Sigma)=O(M_{h_0}(\varepsilon;\mathcal{U}))$.

\begin{theorem}\label{th1}
	For the location-scale model, under Conditions 1-4, the minimax rate of convergence of estimating the regression function $\mu\in\mathcal{U}$ is
	\begin{equation*}
    \begin{aligned}
    \underset{\hat{\mu}}{\inf}~ \underset{\mu\in\mathcal{U},\sigma\in \Sigma, h\in \mathcal{H}}{\sup}\mathbb{E}\left\|{\mu-\hat{\mu}}\right\|_{2,h}^2 \asymp \varepsilon_n^2,
    \end{aligned}
    \end{equation*}
    where $\varepsilon_n$ is determined by $ M_{h_0}(\varepsilon_n;\mathcal{U})=n\varepsilon_n^2$.
\end{theorem}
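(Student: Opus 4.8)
The plan is to establish matching upper and lower bounds, both governed by the critical rate $\varepsilon_n$ solving $M_{h_0}(\varepsilon_n;\mathcal{U})=n\varepsilon_n^2$, using standard metric-entropy machinery (in the spirit of \cite{Yang} and \cite{Yatracos}) but carefully adapted to the location-scale structure with a general, possibly heavy-tailed error density $f$. The key point throughout is that Conditions \ref{con1} and \ref{con2} let us control the statistical distance between two data-generating distributions (induced by different regression functions $\mu$, scale functions $\sigma$) by the $L_2(h_0)$ distance between the underlying functions, so that the problem effectively behaves like a bounded-variance Gaussian-type regression problem at the level of information-theoretic quantities.

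\textbf{Lower bound.} First I would fix $h=h_0$ and $\sigma\equiv 1$ (legitimate since $1\in\Sigma$, as $\underline\sigma<1<\overline\sigma$), reducing to a pure location problem. I would then invoke the generalized Fano / Assouad-type argument: pick a maximal $\varepsilon$-packing set $\{\mu_1,\dots,\mu_N\}$ of $\mathcal{U}$ in $L_2(h_0)$ with $\log N\asymp M_{h_0}(\varepsilon;\mathcal{U})$, and for each $\mu_j$ form the $n$-fold product density of $(X_i,Y_i)$ with $Y_i=\mu_j(X_i)+\epsilon_i$. The chief technical step is to bound the pairwise K-L divergence: $D\big(P_{\mu_j}^{(n)}\,\|\,P_{\mu_k}^{(n)}\big)=n\,\mathbb{E}_{X\sim h_0}\,D\big(f_{0,1}\,\|\,f_{\mu_j(X)-\mu_k(X),\,1}\big)$, and here Condition \ref{con1} gives $D(f_{0,1}\|f_{\eta,1})=O(\eta^2)$ for $\eta$ near $0$; since $\sup_{\mu\in\mathcal{U}}\|\mu\|_\infty<\infty$ the displacements $\eta=\mu_j(X)-\mu_k(X)$ are uniformly bounded, so (after a standard truncation/boundedness argument to cover the non-local regime, or by simply rescaling the packing into a small $L_\infty$-ball, which is permissible for the rate) the divergence is $O(n\varepsilon^2)$ uniformly over the packing. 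Choosing $\varepsilon\asymp\varepsilon_n$ so that $n\varepsilon^2\asymp M_{h_0}(\varepsilon;\mathcal{U})=\log N$ makes the Fano inequality yield a constant lower bound on the probability of error, hence $R_n\succeq\varepsilon_n^2$; richness of $M_{h_0}(\cdot;\mathcal{U})$ is what guarantees the packing can be taken at a separation comparable to the critical $\varepsilon_n$ while keeping $\log N$ of the right order.

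\textbf{Upper bound.} Here I would construct an estimator over a discretized net and bound its risk, again via the entropy calculus of \cite{Yang}. Take an $\varepsilon_n$-net of $\mathcal{U}$ and of $\Sigma$ (the latter has entropy $O(M_{h_0}(\varepsilon_n;\mathcal{U}))$ by Condition \ref{con3}, so the product net still has log-cardinality $\asymp M_{h_0}(\varepsilon_n;\mathcal{U})$) and use a Hellinger-based aggregated/test estimator, e.g. a pairwise-comparison or Birgé $T$-estimator, over the induced family of densities $f_{\eta,\sigma}$. The two facts needed are: (i) Condition \ref{con2} (Hellinger differentiability with linearly independent derivative components) plus the uniform bounds on $\mu$ and the two-sided bounds $\underline\sigma\le\sigma\le\overline\sigma$ give a two-sided comparison $d_H^2\big(f_{\mu(X),\sigma(X)},\,f_{\mu'(X),\sigma'(X)}\big)\asymp (\mu(X)-\mu'(X))^2+(\sigma(X)-\sigma'(X))^2$ uniformly, hence after integrating against $h$ and using Condition \ref{con4} to transfer from $h$ to $h_0$, the squared Hellinger affinity between product densities is comparable to $\|\mu-\mu'\|_{2,h_0}^2+\|\sigma-\sigma'\|_{2,h_0}^2$; (ii) the standard resolvability/index-of-resolvability bound then gives a risk of order $\varepsilon_n^2 + (\log|\text{net}|)/n \asymp \varepsilon_n^2 + M_{h_0}(\varepsilon_n;\mathcal{U})/n \asymp \varepsilon_n^2$, where order regularity of $M_{h_0}(\cdot;\mathcal{U})$ absorbs the constant-factor mismatch between the net resolution and the critical radius, and the $\Sigma$-entropy being dominated ensures the nuisance scale does not inflate the rate. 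Uniformity over $h\in\mathcal{H}$ is handled by Condition \ref{con4}, which sandwiches every $\|\cdot\|_{2,h}$ by a constant times $\|\cdot\|_{2,h_0}$.

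\textbf{Main obstacle.} The delicate part is step (i)/(ii) of the upper bound: Hellinger differentiability is an infinitesimal statement at $(0,1)$, but the estimator must compare densities whose parameters $(\eta,\sigma)=(\mu(X)-\mu'(X),\,\sigma(X)/\sigma'(X))$ range over a \emph{fixed} compact set (of diameter set by $2\sup\|\mu\|_\infty$ and $[\underline\sigma/\overline\sigma,\overline\sigma/\underline\sigma]$), not a shrinking neighborhood. I would handle this by a compactness argument: Hellinger differentiability at $(0,1)$ together with the assumed continuity of $D(f_{0,1}\|f_{\eta,\sigma})$ (and hence of $d_H^2$) on the compact parameter region, plus nonvanishing of $d_H$ away from $(0,1)$ (which follows from the linear-independence clause of Condition \ref{con2} ruling out a flat direction), yields the desired uniform two-sided equivalence $d_H^2(f_{0,1},f_{\eta,\sigma})\asymp \eta^2+(1-\sigma)^2$ on the whole compact set via a continuity-of-the-ratio argument. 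Verifying that this compactness step goes through under only the stated mild conditions — in particular that no extra tail condition on $f$ is secretly needed for the heavy-tailed examples like Cauchy — is where I expect most of the real work to lie, and it is exactly the point the introduction advertises as the novelty over prior Gaussian-error analyses.
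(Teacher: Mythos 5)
Your overall architecture---matching entropy-determined upper and lower bounds, with Conditions~\ref{con1}--\ref{con2} used to tie the K-L/Hellinger distances between location-scale densities to the $L_2$ distances between the regression and scale functions, and a compactness argument to upgrade the infinitesimal Hellinger differentiability at $(0,1)$ to a uniform comparison over the bounded parameter range---is the same as the paper's, and you correctly single out that compactness step as the crux: it is exactly the content of \Cref{relating-to-L2}, proved there by a subsequence/contradiction argument using identifiability, continuity, and the linear independence of the Hellinger derivative components (note the paper only needs the one-sided bound $d_H^2\succeq(\mu_1-\mu_2)^2$ in the location component, not your full two-sided equivalence including the scale part, though the latter also follows from the same argument). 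Where you genuinely diverge is in the implementation of both halves. For the upper bound the paper does not use a Birg\'e-type testing/selection estimator with an index-of-resolvability bound; it builds a predictive-density estimator by averaging Bayes predictive densities over a K-L net (so that $\mathbb{E}D(p_{\mu,\sigma,h}\|\hat p_n)\le V_K(\varepsilon_n)/n+\varepsilon_n^2$), then projects onto the location-scale family in Hellinger distance at each $x$, and finally converts Hellinger to squared $L_2$ via part 1 of \Cref{relating-to-L2}; your route buys the same rate and is a standard alternative, while the paper's construction has the convenient feature that the unknown design density $h$ visibly cancels. For the lower bound the paper bounds the mutual information in Fano's inequality by the K-L covering of the \emph{whole} class, $I\le\log\lvert G_{\varepsilon_n}\rvert+n\varepsilon_n^2$, so it never needs control of pairwise divergences between packing points.

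This last point is where your proposal, as literally written, has a gap: you claim the pairwise K-L divergences over the packing are $O(n\varepsilon^2)$, but for a \emph{global} $\varepsilon$-packing two elements can be order-one apart in $L_2(h_0)$, so the divergence is only $O(n)$ and Fano with pairwise bounds fails. Your parenthetical fixes need to be made precise: ``rescaling the packing into a small $L_\infty$-ball'' is not available for a class known only through its metric entropy, and the ball should in any case be in $L_2(h_0)$. What does work is a pigeonhole localization enabled by the richness condition (a maximal $\varepsilon$-packing is an $\varepsilon$-cover, so some $L_2(h_0)$-ball of radius $\varepsilon$ contains at least $\exp\{M(\alpha\varepsilon)-M(\varepsilon)\}$ points that are mutually $\alpha\varepsilon$-separated, and richness makes this log-cardinality of order $M(\varepsilon)$), combined with the compact-range K-L bound of part 2 of \Cref{relating-to-L2}---needed because even inside the localized packing the pointwise displacements $\mu_j(x)-\mu_k(x)$ are only small in $L_2$, not uniformly, so the purely local statement of \Cref{con1} does not suffice. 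Alternatively, adopt the paper's mixture bound on the mutual information and avoid localization altogether; with either repair your lower bound goes through at the stated rate.
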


\begin{remark}

In the derivation of the minimax rate of convergence, as seen in the proof of the theorem, matching (in order) minimax upper and lower bounds are obtained, both in terms of the metric entropy of the regression class.

\end{remark}

\begin{remark}

In this paper, we focus on nonparametric location-scale model. Many results on parametric quantile/expectile regression have been obtained as mentioned in the introduction, with the familiar parametric rate of convergence. To show such a rate is minimax optimal, a local entropy argument is needed in the derivation of the minimax lower bounds via Fano's inequality (see \cite{Yang} and \cite{WangZhan}).  

\end{remark}

\begin{remark}

In the above theorem, we assume the density $f$ of $\epsilon$ is known. If $f$ is unknown, but known in a finite set of densities that satisfy Conditions 1-2, then the minimax rate of convergence stays the same. It is interesting to observe that in the literature,  upper bound results are often obtained with moment type conditions on the error distribution (e.g., \cite{ChenXiaohong}). While the density $f$ is not required to be known, such results work with specific smoothness function classes to facilitate the construction of an optimal-rate estimator. In contrast, our approach captures the essential role of the metric entropy of the regression function class at the expense of restricting $f$ to be known. 

\end{remark}

See \Cref{appendix} for the proof of \Cref{th1}. Note that for the commonly encountered smoothness function classes (e.g., Sobolev and Besov classes), the metric entropy orders are well-known (see, e.g., \cite{Lorentz}; \cite{Yang} for examples). Therefore the above theorem immediately characterizes the optimal rate of convergence for estimating a quantile/expectile (or $\psi$-tile) regression function in such a class. It is also important to point out that the conditions required for the theorem do not force the existence of the second or even the first moment of the error distribution as is often required for risk upper bounds on regression estimation.  To be fair, previous results in the literature may have the advantage of allowing general distributions of possibly dependent errors (e.g., \cite{XiaohongChen}).  It seems that in the literature, minimax lower bounds for regression are typically derived under normality. Therefore the lower bounding part of Theorem \ref{th1}   
provides a much needed understanding of the fastest possible rate of convergence as the right target when examining specific regression methods on their optimality in estimating the regression function where normality is inappropriate.  

For example, consider Lipschitz class $Lip(\alpha; C; [0,1]^d)$ that consists of all functions on $[0,1]^d$ with all $\lfloor \alpha \rfloor$ partial derivatives satisfying $\sup_{x_1,x_2\in [0,1]^d} |g(x_1)-g(x_2)|\leq C||x_1-x_2||^{\alpha-\lfloor \alpha \rfloor}$. Then the packing entropy of the class under the $L_2$ distance is known to be of order $\epsilon ^{-d/\alpha}$ as $\epsilon\rightarrow 0$. Besides the normal distribution, as will be seen in the next sections, asymmetric Laplace distributions, and Cauchy distributions all satisfy the conditions for the theorem. Therefore, solving the simple equation $\epsilon ^{-d/\alpha}=n\epsilon^2$, which gives the solution $\epsilon_n=n^{-\alpha/(2\alpha+d)}$, by Theorem 1, we know the minimax rate of convergence under the square $L_2$ loss is  $n^{-2\alpha/(2\alpha+d)}$ for the aforementioned error distributions, regardless of the degree of heavy-tailness.

\section{Error distribution examples of quantile regression}\label{error_qr}
\subsection{Asymmetric Laplace error distribution}

The asymmetric Laplace distribution (ALD) has been widely used in economics, finance and related applications (\cite{Hinkley-Revankar, Madan-Seneta}). More recently, \cite{QianChen} and \cite{Taylor2} investigated on the VaR and ES estimation based on ALD. It has the following probability density function:
\begin{equation*}
f(y|\eta,\sigma,\tau)=\frac{\tau(1-\tau)}{\sigma}\exp\Big[{-\rho_{\tau}\Big(\frac{y-\eta}{\sigma}\Big)}\Big],-\infty<y<\infty,
\end{equation*}
where $0<\tau<1$ is the skewness parameter, $\sigma>0$ is the scale parameter, $-\infty<\eta<\infty$ is the location parameter, and $\rho_\tau$ is the check loss. Also, the $\tau$-th quantile of the above ALD equals 0 if the location parameter $\eta=0$.

\subsection{Asymmetric connected double truncated gamma error distribution}

Although ALD has been widely used in asymmetric distribution scenarios, the peaky behavior at the origin is not desirable in many applications. To eliminate this behavior, \cite{Nassiri} constructed an asymmetric connected double truncated gamma (ACDTG) distribution (see \cite{Gijbels2019} for related properties of asymmetric densities). 

For $t>0$ and $k\in \mathbb{R}$, define the upper incomplete gamma function by
\begin{equation*}
	\Gamma(t,k)=\int_k^{+\infty}x^{t-1}e^{-x}dx.
\end{equation*}

The ACDTG distribution has the following probability density function:
\begin{equation*}
	f(y|\eta,\sigma,\alpha,\tau)=\frac{\tau(1-\tau)}{\sigma\Gamma(\alpha+1,\alpha)}[\alpha+\rho_\tau(\frac{y-\eta}{\sigma})]^\alpha \exp\{-[\alpha+\rho_\tau(\frac{y-\eta}{\sigma})]\},-\infty<y<\infty,
\end{equation*}
where $\alpha\ge 0$ is the shape parameter, $0<\tau<1$ is the skewness parameter, $\sigma$ is the scale parameter, and $-\infty<\eta<\infty$ is the location parameter. Also, the $\tau$-th quantile of ACDTG is 0 if the location parameter $\eta=0$.

\subsection{Connected normal-Laplace error distribution}
The previous asymmetric distributions have the same form in the density expression but different slope parameters on the two sides of $\eta$. It is conceivable that the asymmetry can go beyond. Here, we consider an example where the shapes of the density are different on the left and right sides. Specifically, the left side follows a normal distribution and the right side follows a Laplace distribution. We name it connected normal-Laplace (CNL) error distribution. The probability density function $f(y|\eta,\sigma,\tau,p)$  is 
\begin{equation*}
	\frac{2\tau}{\sqrt{2\pi}\sigma}\exp[-\frac{(y-\eta)^2}{2\sigma^2}]I(y\le \eta)+\frac{1-\tau}{\beta\sigma}\exp[-\frac{1}{\beta\sigma}(y-\eta)]I(y>\eta),-\infty<y<\infty,
\end{equation*}
where $\sigma>0$ is the global scale parameter, $\beta>0$ is the additional scale parameter for the right side of the distribution, $-\infty<\eta<\infty$ is the location parameter and $0<\tau<1$ is the proportion parameter. Also, we have the constraint $\frac{1-\tau}{\beta}=\frac{2\tau}{\sqrt{2\pi}}$, which guarantees that \Cref{con1} is satisfied. This distribution has $\tau$-th quantile being 0 if the location parameter $\eta=0$.

\subsection{Cauchy distribution}
The above distributions have sub-exponential behaviors. However,  heavier tails could be more proper in some applications. Thus, we consider an example with heavier tails than the previous distributions, which is the Cauchy distribution. It has the probability density function:

\begin{equation*}
	f(y|\eta, \sigma) = \frac{1}{\pi \sigma[1+(\frac{y-\eta}{\sigma})^2]}, -\infty < y < \infty,
\end{equation*}
where $-\infty < \eta <\infty$ is the location parameter and $\sigma>0$ is the scale parameter. 
\begin{remark}
	Since the mean of Cauchy distribution does not exist, the estimation of the quantile function cannot be realized through the use of the check loss. However, the quantile regression function can still be estimated optimally by other methods. 
\end{remark}

\subsection{Minimax rates under the above error distributions}
\begin{corollary}\label{cor1}
	Under Conditions 3-4, if the error of quantile regression follows one of the distributions presented in the previous subsections, the minimax rate of convergence of estimating the quantile regression function $\mu\in\mathcal{U}$ under the squared $L_2$ distance is
	\begin{equation*}
    \begin{aligned}
    \underset{\hat{\mu}}{\inf}~ \underset{\mu\in\mathcal{U},\sigma\in \Sigma,h\in\mathcal{H}}{\sup}\mathbb{E}\left\|{\mu-\hat{\mu}}\right\|_{2,h}^2 \asymp \varepsilon_n^2,
    \end{aligned}
    \end{equation*}
    where $\varepsilon_n$ is determined by $ M_{h_0}(\varepsilon_n;\mathcal{U})=n\varepsilon_n^2$.
\end{corollary}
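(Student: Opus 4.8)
The plan is to derive \Cref{cor1} directly from \Cref{th1}. Since \Cref{cor1} already assumes Conditions 3--4, and \Cref{th1} requires in addition only Conditions 1--2 (which constrain solely the error density $f$ through the induced location-scale family \eqref{loc_sca}), it suffices to verify that each of the four candidate error densities---ALD, ACDTG, CNL and Cauchy---satisfies \Cref{con1} and \Cref{con2}. Once these verifications are in place, \Cref{th1} immediately gives $R_n\asymp\varepsilon_n^2$ with $\varepsilon_n$ solving $M_{h_0}(\varepsilon_n;\mathcal U)=n\varepsilon_n^2$, which is exactly the asserted conclusion. So the whole argument reduces to four pairs of routine-but-careful checks, which I would carry out distribution by distribution; the detailed divergence and Fisher-information computations can be relegated to the appendix and Supplementary Materials.

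For \Cref{con1} I would invoke the observation recorded just after its statement: the bound $D(f_{0,1}\|f_{\eta,\sigma})=O(\eta^2+(1-\sigma)^2)$ holds whenever $D(f_{0,1}\|f_{\eta,\sigma})$ is finite and twice differentiable in $(\eta,\sigma)$ at $(0,1)$, because $(0,1)$ is a global minimizer of the divergence (so its gradient there vanishes) and a second-order Taylor expansion then yields the quadratic bound. For the ALD and ACDTG, $D$ has the form $\log\sigma+\text{const}+\mathbb E_{Z\sim f_{0,1}}\big[g((Z-\eta)/\sigma)\big]$ for a piecewise-smooth $g$ built from $\rho_\tau$; since $Z$ has a bounded density, the expectation is twice differentiable in $(\eta,\sigma)$ even though $g$ itself has a kink. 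For the Cauchy family one can use the closed form $D(f_{0,1}\|f_{\eta,\sigma})=\log\big((1+\sigma)^2+\eta^2\big)-\log(4\sigma)$, which is smooth near $(0,1)$ and vanishes there. For the CNL density the constraint $\tfrac{1-\tau}{\beta}=\tfrac{2\tau}{\sqrt{2\pi}}$ makes the density continuous at $y=\eta$, which is precisely what allows $D$ to be twice differentiable at $(0,1)$; I would compute $D$ by splitting the integral at $y=\eta$ and checking that the boundary terms produced by differentiating under the integral sign cancel.

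For \Cref{con2} I would verify Hellinger differentiability of \eqref{loc_sca} at $(0,1)$ together with linear independence of the two components of the Hellinger derivative (the location score and the scale score). The standard route (via the criteria in \cite{Hajek}, \cite{LeCam2}, \cite{Pollard2}) is to show that for a.e.\ $y$ the map $(\eta,\sigma)\mapsto\xi_{\eta,\sigma}(y)=\sqrt{f_{\eta,\sigma}(y)}$ is continuously differentiable near $(0,1)$ and that the Fisher information matrix $\int\dot\xi_{\eta,\sigma}\dot\xi_{\eta,\sigma}^{\top}$ is finite and continuous at $(0,1)$. For Cauchy the density is smooth and strictly positive, so this is immediate, and the two scores are visibly not proportional. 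For ALD, ACDTG and CNL, $\xi_{\eta,\sigma}(y)$ fails to be differentiable in $\eta$ only on the single hyperplane $y=\eta$, where each density is continuous (for CNL because of the stated constraint); hence $\sqrt{f_{\eta,\sigma}}$ is absolutely continuous in $y$ with a square-integrable weak derivative, and the $L_2$ notion of differentiability survives the kink. Linear independence of the two scores I would check by direct inspection---for instance, for the ALD the location score at $(0,1)$ is $\tau-I(y<0)$ and the scale score is $\rho_\tau(y)-1$, and no nontrivial linear combination of these can vanish a.e.\ since the scale score is unbounded while the location score is piecewise constant.

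The main obstacle, and the step that deserves genuine care, is exactly the behavior at the junction $y=\eta$ for the three non-Cauchy densities: one must argue that perturbing $\eta$---which moves the non-smooth point---contributes only an $o(|\theta-\theta_0|)$ error in the $L_2$ norm appearing in \eqref{2}, rather than an $O(|\theta-\theta_0|)$ error. This is where the continuity of the density at the junction (automatic for ALD and ACDTG, enforced by the constraint for CNL) enters: one shows that the remainder's mass over a shrinking neighborhood of $y=\eta$ is of order $|\eta|$ times a factor that is itself $o(1)$, hence negligible. This is precisely the classical advantage of quadratic-mean (Hellinger) differentiability over pointwise differentiability of the log-likelihood, and I would make the estimate explicit for at least one representative case (say ALD) and indicate that the other two are analogous. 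Everything else is mechanical bookkeeping.
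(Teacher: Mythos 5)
Your proposal is correct and follows essentially the same route as the paper: reduce \Cref{cor1} to \Cref{th1} and verify \Cref{con1} by showing $D(f_{0,1}\|f_{\eta,\sigma})$ is finite and twice differentiable at $(0,1)$ (using the closed form for Cauchy and the junction constraint for CNL), and verify \Cref{con2} via a Pollard-type quadratic-mean differentiability criterion handling the kink at $y=\eta$ — the paper packages this last step as its \Cref{lm2} (an extension of \Cref{prop1}) and checks its conditions (i)--(v) for each distribution, which is the same idea as your a.e.-differentiability-plus-continuous-Fisher-information argument. The only notable difference is that you explicitly verify linear independence of the location and scale scores (e.g., for ALD), a point the paper's supplementary proofs leave implicit, so your write-up is if anything slightly more complete there.
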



\section{An error distribution example of expectile regression}\label{error_er}
\subsection{Asymmetric normal distribution}
Let us study a well-known distribution called asymmetric normal distribution, see \cite{OHagan} and \cite{Azzalini}. Some existing results on this distribution and the application on economics data can be found in \cite{JijiXing}. It has probability density function $f(y|\eta,\sigma,\phi)$:
\begin{equation*}
	\frac{2}{\sqrt{\pi}\sigma}\frac{\sqrt{\phi(1-\phi)}}{\sqrt{\phi}+\sqrt{1-\phi}}\exp[-\frac{\phi}{\sigma^2}(y-\eta)^2I(y\ge \eta)-\frac{1-\phi}{\sigma^2}(y-\eta)^2I(y<\eta)], -\infty<y<\infty,
\end{equation*}
where $0<\phi<1$ is the skewness parameter, $\sigma>0$ is the scale parameter, and $-\infty < \eta<\infty$ is the location parameter. Given the location parameter $\eta=0$, the $\phi$-th expectile equals 0.

\subsection{Minimax rate under the asymmetric normal distribution}
\begin{corollary}\label{cor2}
	Under Conditions 3-4, if the error of expectile regression follows the asymmetric normal distribution, the minimax rate of convergence of estimating the expectile regression function $\mu\in\mathcal{U}$ under the squared $L_2$ distance is
	\begin{equation*}
    \begin{aligned}
    \underset{\hat{\mu}}{\inf}~ \underset{\mu\in\mathcal{U},\sigma\in \Sigma,h\in\mathcal{H}}{\sup}\mathbb{E}\left\|{\mu-\hat{\mu}}\right\|_{2,h}^2 \asymp \varepsilon_n^2,
    \end{aligned}
    \end{equation*}
    where $\varepsilon_n$ is determined by $ M_{h_0}(\varepsilon_n;\mathcal{U})=n\varepsilon_n^2$.
\end{corollary}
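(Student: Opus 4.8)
The plan is to derive \Cref{cor2} as an immediate consequence of \Cref{th1}. Since Conditions~3 and 4 are already assumed in the statement, it suffices to check that the asymmetric normal location-scale family meets the standing regularity requirement (finiteness and continuity of $D(f_{0,1}\|f_{\eta,\sigma})$) together with Conditions~1 and 2; the conclusion then follows verbatim from \Cref{th1}. Throughout, write $f_{\eta,\sigma}(y)=\sigma^{-1}f_{0,1}((y-\eta)/\sigma)$, where $f_{0,1}$ is the density displayed above with $\eta=0$, $\sigma=1$. The log-ratio $\log\!\big(f_{0,1}(y)/f_{\eta,\sigma}(y)\big)$ equals $\log\sigma$ plus a piecewise-quadratic function of $y$, and $f_{0,1}$ has Gaussian-type decay on both tails, so $D(f_{0,1}\|f_{\eta,\sigma})<\infty$ for all $\eta\in\mathbb{R}$, $\sigma>0$, and dominated convergence gives joint continuity in $(\eta,\sigma)$.

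For Condition~1 I would show that $(\eta,\sigma)\mapsto D(f_{0,1}\|f_{\eta,\sigma})$ is twice continuously differentiable near $(0,1)$; by the observation following \Cref{con1} this gives the required bound $O(\eta^2+(1-\sigma)^2)$, because $D$ attains its minimum value $0$ at $(0,1)$ so its gradient vanishes there. A short computation gives $D(f_{0,1}\|f_{\eta,\sigma})=c_0+\log\sigma+\sigma^{-2}G(\eta)$, where $c_0$ is a constant and $G(\eta)=\phi\int_\eta^{\infty}f_{0,1}(y)(y-\eta)^2\,dy+(1-\phi)\int_{-\infty}^{\eta}f_{0,1}(y)(y-\eta)^2\,dy$. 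Differentiating under the integral sign, every boundary term produced by the moving endpoint carries the factor $(y-\eta)^2|_{y=\eta}=0$ and hence drops out, yielding $G''(\eta)=2\phi\int_\eta^\infty f_{0,1}+2(1-\phi)\int_{-\infty}^\eta f_{0,1}$, which is continuous in $\eta$. Hence $G\in C^2$ and $D$ is $C^2$ near $(0,1)$.

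For Condition~2 I would invoke the classical quadratic-mean-differentiability criterion for location-scale families. The density $f_{0,1}$ is absolutely continuous --- in fact $C^1$, since both pieces $\exp(-\phi y^2)$ and $\exp(-(1-\phi)y^2)$ have vanishing derivative at the origin and therefore glue together in a $C^1$ fashion --- and the Fisher-information integrals $\int (f_{0,1}'/f_{0,1})^2 f_{0,1}$ and $\int (1+y f_{0,1}'/f_{0,1})^2 f_{0,1}$ are finite because $f_{0,1}'/f_{0,1}$ is piecewise linear while $f_{0,1}$ decays like a Gaussian. Consequently $(\eta,\sigma)\mapsto\sqrt{f_{\eta,\sigma}}$ is Hellinger differentiable at $(0,1)$, with derivative components proportional to $f_{0,1}'/\sqrt{f_{0,1}}$ and $(f_{0,1}+y f_{0,1}')/\sqrt{f_{0,1}}$. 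To see these are not linearly dependent, suppose $a f_{0,1}'(y)+b\big(f_{0,1}(y)+y f_{0,1}'(y)\big)=0$ for a.e.\ $y$; restricting to $y>0$, where $f_{0,1}'(y)/f_{0,1}(y)=-2\phi y$, this becomes the polynomial identity $-2\phi b\,y^2-2\phi a\,y+b=0$ for all $y>0$, which forces $b=0$ and then $a=0$.

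The only subtle point is that $f_{0,1}$ fails to be twice differentiable at the junction $y=\eta$, where $f_{0,1}''$ jumps between $-2(1-\phi)$ and $-2\phi$, so one must confirm that this kink spoils neither the $C^2$ smoothness of the K--L divergence nor the Hellinger differentiability. As the computation of $G''$ shows, integrating against $f_{0,1}$ absorbs the singularity because the relevant boundary contributions vanish to second order at $y=\eta$; and Hellinger differentiability only requires absolute continuity of $\sqrt{f_{0,1}}$ together with finite Fisher information, neither of which is affected by a jump in $f_{0,1}''$. With Conditions~1 and 2 verified, \Cref{th1} yields the asserted minimax rate $\varepsilon_n^2$ with $\varepsilon_n$ solving $M_{h_0}(\varepsilon_n;\mathcal{U})=n\varepsilon_n^2$.
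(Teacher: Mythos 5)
Your proposal is correct and follows the same overall skeleton as the paper's proof: verify \Cref{con1} and \Cref{con2} for the asymmetric normal location-scale family and then quote \Cref{th1}. The details, however, are genuinely different and in places cleaner. For \Cref{con1}, the paper splits into the cases $\eta>0$ and $\eta<0$ and grinds out all first and second partial derivatives of $D(f_{0,1}\|f_{\eta,\sigma})$ by the Leibniz rule, exhibiting the Hessian at $(0,1)$; your closed-form decomposition $D=c_0+\log\sigma+\sigma^{-2}G(\eta)$ reduces the whole issue to showing $G\in C^2$, which your boundary-term argument does correctly, and it reproduces the paper's diagonal Hessian entries ($G''(0)=2\sqrt{\phi(1-\phi)}$ and $-1+6G(0)=2$); incidentally your formula gives cross-partial $-2G'(0)=0$ at $(0,1)$, whereas the paper's displayed off-diagonal entry is nonzero, but this discrepancy is immaterial since \Cref{con1} only needs twice differentiability plus the vanishing gradient at the minimum, which you invoke. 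For \Cref{con2}, the paper verifies conditions $(i)$--$(v)$ of its own \Cref{lm2} (the two-parameter extension of Pollard's criterion), while you appeal to the classical quadratic-mean-differentiability criterion for location-scale families (absolute continuity of the $C^1$ density plus finite location and scale Fisher information); since $f_{0,1}$ is strictly positive, $C^1$, with piecewise-linear score and Gaussian tails, that criterion applies, and within the paper's self-contained framework the same facts would let you check \Cref{lm2} directly, so nothing is lost. A genuine plus of your write-up is the explicit verification that the two components of the Hellinger derivative are not linearly dependent (the polynomial-identity argument on $y>0$), a requirement of \Cref{con2} that the paper's supplementary proof does not spell out.
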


\begin{remark}
	The expectiles (when existing) and quantiles are related to each other. In the location-scale model, \cite{YaoQiwei} gave a mapping between quantiles and expectiles. Hence, under the same distribution of $\epsilon$, the $\tau$-th quantile is equivalent to the $\phi$-th expectile at a corresponding level $\phi$ that depends on $\tau$ and the distribution of $\epsilon$. Therefore, except the Cauchy distribution, the error distributions in Section 4 can be used under ER and the error distribution in Section 5 can also be used under QR.
\end{remark}

\section{Minimax rates of convergence for some dimension reduction models}\label{examples}
In this section, we apply the main theorem to derive minimax rates for some dimension reduction models. In the following two subsections, we consider $\pmb x\in [0,1]^d$ and let $\mathcal{H}$ be the class of densities $h$ on $[0,1]^d$ that satisfied $1/\overline{C}\le h(x)\le \overline{C}$ for a constant $\overline{C}>1$. Suppose $\Sigma=\{\exp(\sum_{i=1}^dc_ix_i):\underset{1\le i\le d}{\max}\lvert c_i\rvert\le C\}$ for some known $C>0$. Let the error distribution be any of the examples considered in the previous sections.

\subsection{Additive and low order interaction models}
We first consider the regression functions in Sobolev classes with different interaction orders and smoothness.

For $r\ge 1$, let $\textbf{z}=(z_1,\cdots,z_r)\in[0,1]^r$ and $\textbf{k}=(k_1,\cdots,k_r)$ with integer $k_i\ge 0, i=1,\cdots,r$. Also, let $D^{\textbf{k}}$ denote the differentiation operator $D^{\textbf{k}}=\frac{\partial^{\lVert \textbf{k}\rVert_1}}{\partial z_1^{k_1}\cdots\partial z_r^{k_r}}$. For integer $\alpha$, define the $r$-dimensional Sobolev norm 
\begin{equation*}
	\lVert g\rVert_{\mathcal{G}_2^{\alpha,r}}=\lVert g\rVert_2+\left( \sum_{\lVert \textbf{k}\rVert_1=\alpha}\int_{[0,1]^r}\lvert D^{\textbf{k}}g\rvert^2d\textbf{z}\right)^{1/2}.
\end{equation*}
Let $\mathcal{G}_2^{\alpha,r}(C)$ denote the set of all functions $g$ on $[0,1]^r$ with $\lVert g\rVert_{\mathcal{G}_2^{\alpha,r}}\le C$ for some known $C>0$. Then, we consider the following function classes on $[0,1]^d$:

\begin{equation*}
\begin{aligned}
    &T_1(\alpha;C)=\{\sum_{i=1}^dg_i(x_i):g_i\in \mathcal{G}_2^{\alpha,1}(C), 1\le i\le d\},\\
	&T_2(\alpha;C)=\{\sum_{1\le i<j\le d}g_{i,j}(x_i,x_j):g_{i,j}\in \mathcal{G}_2^{\alpha,2}(C), 1\le i<j\le d\},\\
	&\vdots\\
	&T_d(\alpha;C)=\mathcal{G}_2^{\alpha,d}(C),
	\end{aligned}
\end{equation*}
where $\alpha\ge 1$ and $C>0$. Note that $T_1(\alpha;C)$ contains additive functions and $T_r(\alpha;C),1<r\le d$ have higher order interactions. The $L_2$ metric entropies of these classes are of the same orders as $\mathcal{G}_2^{\alpha,1}(C),\mathcal{G}_2^{\alpha,2}(C),\cdots,\mathcal{G}_2^{\alpha,d}(C)$, respectively, which are $\epsilon^{-\frac{1}{\alpha}},\epsilon^{-\frac{2}{\alpha}},\cdots,\epsilon^{-\frac{d}{\alpha}}$, respectively (see, e.g, \cite{Yang4}).

By \Cref{th1}, we have the following result.
\begin{corollary}
	For the nonparametric regression function over class $T_r(\alpha;C)$, the minimax rate of convergence under the squared $L_2$ loss for estimating the regression function is $n^{-\frac{2\alpha}{2\alpha+r}}$ for all $\alpha\ge 1$ and $1\le r\le d$.
\end{corollary}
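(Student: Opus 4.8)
The claim is obtained by specializing \Cref{th1} — or equivalently \Cref{cor1} and \Cref{cor2}, which already record that each of the listed error densities (ALD, ACDTG, CNL, Cauchy, asymmetric normal) satisfies Conditions~1--2 — to the concrete triple $\mathcal{U}=T_r(\alpha;C)$, $\mathcal{H}=\{h\text{ on }[0,1]^d:1/\overline{C}\le h\le\overline{C}\}$, and $\Sigma=\{\exp(\sum_{i=1}^d c_i x_i):\max_{1\le i\le d}|c_i|\le C\}$. So the plan is simply to (i) confirm the standing requirement $\sup_{\mu\in\mathcal{U}}\|\mu\|_\infty<\infty$, (ii) verify Conditions~3--4 with an appropriate dominating density $h_0$, and (iii) solve $M_{h_0}(\varepsilon_n;\mathcal{U})=n\varepsilon_n^2$ to read off the rate.

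For (i)--(ii), I would take $h_0\equiv 1$, the uniform density on $[0,1]^d$; then $h(x)/h_0(x)=h(x)\le\overline{C}$ for every $h\in\mathcal{H}$, so Condition~4 holds. Boundedness of $\mathcal{U}$ follows from the Sobolev embedding $\mathcal{G}_2^{\alpha,r}\hookrightarrow C([0,1]^r)$, which bounds each component function of an element of $T_r(\alpha;C)$ by a fixed multiple of $C$; alternatively, one may intersect $T_r(\alpha;C)$ with a fixed $L_\infty$ ball, which leaves the metric entropy order unchanged. For Condition~3, recall that $T_r(\alpha;C)$ is a sum of $\binom{d}{r}$ copies of $\mathcal{G}_2^{\alpha,r}(C)$, so its $L_2(h_0)$ (Lebesgue) packing entropy is of order $M_{h_0}(\varepsilon;\mathcal{U})\asymp\varepsilon^{-r/\alpha}$ (see \cite{Yang4}). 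A pure power $\varepsilon^{-r/\alpha}$ is rich (for any $0<a<1$, $M(a\varepsilon)/M(\varepsilon)=a^{-r/\alpha}>1$) and order regular ($M(c\varepsilon)\asymp M(\varepsilon)$ for every $c>0$); and since $\Sigma$ is a compact parametric family, $M_{h_0}(\varepsilon;\Sigma)=O(\log(1/\varepsilon))=O(\varepsilon^{-r/\alpha})$. Hence Conditions~3--4 are met.

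Applying \Cref{th1}, the minimax rate is $\varepsilon_n^2$ where $\varepsilon_n$ solves $\varepsilon_n^{-r/\alpha}=n\varepsilon_n^2$, i.e. $\varepsilon_n^{-(2\alpha+r)/\alpha}=n$, so $\varepsilon_n=n^{-\alpha/(2\alpha+r)}$ and the rate equals $n^{-2\alpha/(2\alpha+r)}$, as claimed. There is no real obstacle here beyond the entropy bound for the low-order interaction classes $T_r(\alpha;C)$, which is classical: everything else is a matter of checking that the hypotheses of \Cref{th1} survive the specialization, and the asymmetry or heavy-tailedness of the error plays no role once Conditions~1--2 have been verified for it in the earlier corollaries.
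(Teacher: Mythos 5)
Your proposal is correct and follows essentially the same route as the paper, which likewise establishes this corollary by citing the metric entropy order $\varepsilon^{-r/\alpha}$ of $T_r(\alpha;C)$ (via the same reference), checking that Conditions 3--4 hold for the stated $\mathcal{H}$ and parametric $\Sigma$, and solving $M_{h_0}(\varepsilon_n;\mathcal{U})=n\varepsilon_n^2$ in Theorem~\ref{th1}. The only caveat is that the Sobolev embedding $\mathcal{G}_2^{\alpha,r}\hookrightarrow C([0,1]^r)$ requires $\alpha>r/2$ and so does not cover all pairs $(\alpha,r)$ with $\alpha\ge 1$, but your fallback of intersecting with a fixed $L_\infty$ ball (which preserves the entropy order) is exactly what is needed to match the paper's standing assumption $\sup_{\mu\in\mathcal{U}}\lVert\mu\rVert_\infty<\infty$.
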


From the result above, we see that the minimax rate of convergence is much slower when the interaction order is high. Also, for the additive model, the rate of convergence is just decided by the marginal smoothness, which can significantly increase the estimation accuracy compared with the models that contain high-order interactions.

Instead of Sobolev classes considered above, similar results can be readily stated for H\"{o}lder classes, in which the smoothness parameter $\alpha$ is no longer  restricted to be a positive integer, but can be any positive number. 

\subsection{Multiple Index Models}
Differently from the additive or low interaction order modeling for the regression function, we may consider multiple index models instead in the location-scale model setting. For recent research on multiple index models, see e.g, \cite{ChenD} and \cite{MaS}. We here focus on estimation of the whole regression function instead of the linear space parameters or the one dimensional functions. For $p\ge 1$, the regression function is of the form
\begin{equation*}
	\lambda_1(\pmb\beta_{(1)}^{T}\pmb x)+\lambda_2(\pmb\beta_{(2)}^{T}\pmb x)+\cdots +\lambda_p(\pmb\beta_{(p)}^{T}\pmb x),
\end{equation*}
where $\pmb x\in [0,1]^d$ and $\lambda_i,1\le i\le p$ are non-constant one dimensional functions (the details will be given later). Moreover, $\pmb\beta_{(i)}\in \mathbb{R}^d,1\le i\le p$ are distinct $d$-dimensional vectors satisfying $\lVert\pmb\beta_{(i)}\rVert_2=1,1\le i\le p$. When $p=1$, it corresponds to the single index model. Since we are interested in the convergence rate of minimax estimation of the whole regression function, we are not concerned with the identifiability issues and do not specify the identifiability conditions for the model we study.

Assume $\lambda_i,1\le i\le p$, are non-constant and belong to a H\"{o}lder class $\Lambda^{s,\gamma}(L)$ including all of the functions satisfying $\gamma$-H\"{o}lder continuous condition on the $s$-th derivative of $\lambda$:
\begin{equation*}
	\lvert \lambda^{(s)}(z_1)-\lambda^{(s)}(z_2)\rvert\le L\lvert z_1-z_2\rvert^\gamma,
\end{equation*}
where $z_1,z_2\in\mathbb{R}$, $L$ is a constant, $s$ is a nonnegative integer and $0<\gamma\le 1$, and assume for each $k$, $0\le k\le s$, the $k$-th derivative of $\lambda$ is uniformly upper bounded. Let $\alpha=s+\gamma$. We know that the metric entropy of $\Lambda^{s,\gamma}(L)$ is $M_d(\varepsilon;\Lambda)\asymp \varepsilon^{-\frac{1}{\alpha}}$ under the $L_2$ distance as shown in \cite{Lorentz}.

\begin{corollary}\label{cor4}
    If the regression function is assumed to belong to the multiple index model over the H\"{o}lder classes $\Lambda^{s,\gamma}(L)$, the minimax rate of convergence under the squared $L_2$ loss for estimating the regression function is $n^{-\frac{2\alpha}{2\alpha+1}}$ for all $\alpha> 0$.
\end{corollary}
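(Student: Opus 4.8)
The plan is to reduce the statement to the general machinery already in place: since the error distribution is one of the examples of \Cref{error_qr} or \Cref{error_er}, Conditions~1--2 are in force (this is exactly what underlies \Cref{cor1} and \Cref{cor2}), so it remains only to check Conditions~3--4 for the present setup and to identify the order of the packing entropy $M_{h_0}(\varepsilon;\mathcal{U})$ of the multiple index class $\mathcal{U}=\{\sum_{i=1}^{p}\lambda_i(\pmb\beta_{(i)}^{T}\pmb x):\lambda_i\in\Lambda^{s,\gamma}(L),\ \lVert\pmb\beta_{(i)}\rVert_2=1\}$ in the $L_2(h_0)$ distance on $[0,1]^d$; \Cref{th1} (or directly \Cref{cor1}/\Cref{cor2}) then delivers the rate. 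First I would take $h_0$ to be the uniform density on $[0,1]^d$; since every $h\in\mathcal{H}$ satisfies $1/\overline C\le h\le\overline C$, Condition~4 holds with $C=\overline C^{2}$. Because $\Sigma=\{\exp(\sum_i c_i x_i):\max_i|c_i|\le C\}$ is a parametric family indexed by a compact set, a standard volumetric argument gives $M_{h_0}(\varepsilon;\Sigma)=O(\log(1/\varepsilon))$, so Condition~3 will follow once $M_{h_0}(\varepsilon;\mathcal{U})$ is shown to be of the polynomial order $\varepsilon^{-1/\alpha}$, which is manifestly rich and order regular.

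For the entropy upper bound I would construct an $\varepsilon$-net of $\mathcal{U}$ by discretizing the index vectors and the link functions separately. Cover the unit sphere $S^{d-1}$ by $\asymp\delta^{-(d-1)}$ points to precision $\delta$, and cover the H\"older ball $\Lambda^{s,\gamma}(L)$ in the sup norm by $\exp(c_1\delta^{-1/\alpha})$ functions. Given $\lambda,\tilde\lambda$ with $\lVert\lambda-\tilde\lambda\rVert_\infty\le\delta$ and $\pmb\beta,\tilde{\pmb\beta}$ with $\lVert\pmb\beta-\tilde{\pmb\beta}\rVert_2\le\delta$, the triangle inequality gives $\lVert\lambda(\pmb\beta^{T}\pmb x)-\tilde\lambda(\tilde{\pmb\beta}^{T}\pmb x)\rVert_{2,h_0}\le\delta+\lVert\lambda(\pmb\beta^{T}\pmb x)-\lambda(\tilde{\pmb\beta}^{T}\pmb x)\rVert_{2,h_0}$, and the last term is controlled uniformly in $\pmb x\in[0,1]^d$ using that $\lambda$ is Lipschitz when $s\ge1$ (uniformly bounded first derivative) and $\gamma$-H\"older continuous when $s=0$, so $|\lambda(\pmb\beta^{T}\pmb x)-\lambda(\tilde{\pmb\beta}^{T}\pmb x)|\lesssim(\sqrt d\,\delta)^{\min(1,\gamma)}$. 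Summing $p$ such terms only multiplies the radius by $p$, so choosing $\delta$ a suitable fixed power of $\varepsilon$ yields a net of log-cardinality $\asymp\varepsilon^{-1/\alpha}+p(d-1)\log(1/\varepsilon)\asymp\varepsilon^{-1/\alpha}$ (recall $d,p$ fixed). For the matching lower bound I would freeze the $\pmb\beta_{(i)}$ at distinct unit vectors with $\pmb\beta_{(1)}$ equal to the first coordinate vector, set $\lambda_2=\cdots=\lambda_p\equiv0$, and let $\lambda_1$ range over an $\varepsilon'$-packing of $\Lambda^{s,\gamma}(L)$ in $L_2[0,1]$; since the pushforward of $h_0\,d\pmb x$ under $\pmb x\mapsto x_1$ has density bounded below by $1/\overline C$ on $[0,1]$, we get $\lVert\lambda_1(x_1)-\lambda_1'(x_1)\rVert_{2,h_0}\ge\overline C^{-1/2}\lVert\lambda_1-\lambda_1'\rVert_{2,[0,1]}$, hence $M_{h_0}(c\varepsilon;\mathcal{U})\succeq\varepsilon^{-1/\alpha}$. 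With order regularity of $\varepsilon^{-1/\alpha}$ this gives $M_{h_0}(\varepsilon;\mathcal{U})\asymp\varepsilon^{-1/\alpha}$.

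It then remains to apply \Cref{th1}: $\varepsilon_n$ solves $\varepsilon_n^{-1/\alpha}=n\varepsilon_n^{2}$, i.e.\ $\varepsilon_n^{(2\alpha+1)/\alpha}=n^{-1}$, so $\varepsilon_n=n^{-\alpha/(2\alpha+1)}$ and the minimax rate is $\varepsilon_n^{2}=n^{-2\alpha/(2\alpha+1)}$, for every $\alpha=s+\gamma>0$.

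The step I expect to be the main obstacle is the entropy computation for the composed class, specifically the joint control of the finite-dimensional index part and the nonparametric link part: one must verify that perturbing $\pmb\beta$ costs only a H\"older-modulus factor in $L_\infty$ (routine for $s\ge1$, a little more delicate for $s=0$ where only $\gamma$-H\"older continuity is available), and, on the lower-bound side, that the $L_2(h_0)$ geometry on $[0,1]^d$ faithfully reflects the one-dimensional $L_2$ geometry of the link function, which is where boundedness of $h_0$ and the structure of the projection $\pmb x\mapsto\pmb\beta^{T}\pmb x$ enter. Everything else---checking Conditions~3--4 and solving the defining equation for $\varepsilon_n$---is bookkeeping.
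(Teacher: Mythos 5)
Your proposal is correct and follows essentially the same route as the paper's proof: a product $\varepsilon$-net for the link functions (sup-norm net of the H\"{o}lder ball) and the index vectors, with the H\"{o}lder/Lipschitz modulus of $\lambda$ absorbing the perturbation of $\pmb\beta$, so the index part contributes only a $\log(1/\varepsilon)$ term and $M_{h_0}(\varepsilon;\mathcal{U})\asymp\varepsilon^{-1/\alpha}$, after which \Cref{th1} gives $n^{-2\alpha/(2\alpha+1)}$. Your explicit packing lower bound (freezing the $\pmb\beta_{(i)}$ and varying one link function) is a detail the paper leaves implicit, but it is the obvious justification of the asserted entropy order and does not change the argument.
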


See \Cref{appendix} for the proof of \Cref{cor4}.

Through this example, we see that for the multiple index model with fixed $p$, the minimax rate of convergence is just decided by the overall smoothness parameters $\alpha$. This can significantly improve the convergence speed compared with models with $d$-variate smooth function $ \lambda^*(x_1,\cdots,x_d)$ with overall smoothness $\alpha$, in which case the convergence rate is only $n^{-\frac{2\alpha}{2\alpha+d}}$ as seen in the previous subsection. This reflects the well-known curse of dimensionality.

\section{Discussions}\label{feasible_estimator}
In this section, we discuss on relation of our work to the literature, our assumption on size of the scale function class relative to that of the target regression function class, and feasible estimation to achieve the minimax rate of convergence.

\subsection{Minimax lower rates under non-normal error distributions}\label{non_normal_lower_rates}

As pointed out in the introduction, in the literature, for regression problems, estimators have been developed for the estimation of the $\psi$-tile (mean, quantile or expectile) and shown to archive a certain rate of convergence. For claiming the estimators are minimax-rate optimal, typically the argument that the established upper rate matches the known minimax lower rate under the Gaussian assumption on the regression error is used. Such a claim is certainly valid in the case of the error distribution assumed in a class that includes the normal distribution. 
It is helpful to emphasize here that the worst-case performance in the minimax evaluation includes that under the normal error. 
However, this type of results have two limitations. First, it leaves the problem unresolved if the lower bound can actually be improved in rate under a class of error distributions that do not contain the normal distribution. For instance, consider a light tail distribution $f(y)=c_1\exp{(-c_2y^4)}$, $-\infty <y< \infty$, where $c_1$ and $c_2$ are positive constants. Clearly, its tail is much lighter compared to the normal distribution. The lower bounding results in the literature, to our knowledge, does not answer the question if the minimax rate of convergence for expectile or quantile regression under this error distribution is faster than the normal distribution. \Cref{th1} in this paper is applicable, which shows that the rates of convergence is identical to the normal regression case. Second, nor is it known that a severely heavy tailed error distribution would damage the minimax rate of convergence. In particular, to the best of our knowledge, for example, it is unknown if the minimax-rate of mean/expectile regression (or quantile regression) under an asymmetric Laplace distribution (or Cauchy distribution for quantile regression) stays the same as under the normal error. 
Our results in this paper fill in the aforementioned gaps in the literature and offer a general understanding on optimal $\psi$-tile regression in the location-scale framework. 

Another interesting point is the following. While our results show that heavy tail of the error distribution does not affect the minimax rate of convergence for quantile estimation, it should not be interpreted as that the error distribution does not really matter for optimal-rate convergence. It actually does! For instance, suppose the analyst knows that the error distribution is symmetric about 0 and wants to estimate the median (and mean if it exists). If the error is Cauchy but the analyst mistakenly treats the error as normal and consequently applies a normal mean regression tool to estimate the median function. Then, the estimators (e.g., histogram/kernel regression) typically may not converge at all due to the simple fact that the sample mean of Cauchy random variables has the same Cauchy distribution and thus local averaging of the responses does not even produce a consistent estimator. In contrast, an estimator that properly targets the median instead of the non-existing mean (such as our estimator in the derivation of the minimax upper bound, albeit difficult to implement) achieves the minimax rate of convergence. Therefore, in presence of heavy-tailed random errors, unlike the light-tail situation, it is critically important to choose the right regression method for optimal rate of convergence.    

\subsection{Massiveness of the scale functions relative to that of the regression functions}\label{scale_function_discussion}

For our main results, \Cref{con3} is needed, which requires that $M_{h_0}(\varepsilon;\Sigma)=O(M_{h_0}(\varepsilon;\mathcal{U}))$ as $\varepsilon\to 0$. It means that the class of the scale functions is no larger than that of the regression functions in terms of the metric entropy order. In our view, the condition is sensible for $\psi$-tile regression for the following two reasons.

First, for mean, expectile or quantile regressions, when the errors are suspected to be heteroskedastic, a common approach is to model the variance function parametrically for feasibility and interpretability purposes, and it has been successfully applied in various applications (see, e.g., \cite{Davidian}). This approach is also natural because in $\psi$-tile regression, the regression function is of primary interest and the error standard deviation (or scale) function is a nuisance and simplicity in its modeling is preferred. With a parametric family for the scale functions,  $M_{h_0}(\varepsilon;\Sigma)=O(M_{h_0}(\varepsilon;\mathcal{U}))$ certainly holds.

Second, when one is interested in $\psi$-tile regression at multiple $\tau$ levels, the condition $M_{h_0}(\varepsilon;\Sigma)=O(M_{h_0}(\varepsilon;\mathcal{U}))$ automatically holds. 
Given model (\ref{eq:1}) with an error distribution with density $f$, the conditional quantile, expectile, momentile and $\psi$-tile at arbitrarily level $\tau\in (0, 1)$ are all of the form of $\tilde{\mu}(x)=\mu(x)+c\sigma(x)$ for some constant $c$ that depends on $\tau$, $f$ and $\psi$. Unless $c=0$, which can happen for at most a single $\tau$ value, the metric entropy of the associated class of the regression function $\tilde{\mu}(x)$ is of order $\max(M_{h_0}(\varepsilon; {\cal{U}}), M_{h_0}(\varepsilon; \Sigma))$ as $\varepsilon\rightarrow 0$. Therefore, as long as one is interested in at least two $\tau$ levels, the regression function classes  are automatically as large as the class of the scale functions. 

It should be pointed out that the case $M_{h_0}(\varepsilon;\Sigma)$ is much larger than $O(M_{h_0}(\varepsilon;\mathcal{U}))$ can still be of interest. As a reviewer pointed out, parametric regression with independent errors with standard deviation $\sigma (x)$ uniformly bounded enjoys the usual parametric rate of convergence for estimation of the regression function. In this case, clearly the class of scale functions is really large (in fact with infinite metric entropy for a small enough packing radius). It is intriguing that the tremendous complexity of the class of scale functions actually does not damage the rate of convergence for the estimation of the regression function. On the other hand, as soon as one intends to estimate another $\psi$-tile, even if it is of the same nature but at a slightly different $\tau$ level, the rate of convergence is completely different. For example, under model (\ref{eq:1}) with the strong assumption of normal error and $E\varepsilon =0$, suppose $\cal{U}$ is a parametric function class and $\Sigma$ consists of all functions $\underline{\sigma}\leq \sigma (x)\leq \overline{\sigma}$ for some positive constants $  \underline{\sigma}  < \overline{\sigma}$. Then for mean regression, the parametric rate $1/n$ is achieved by standard methods such as least squares. However, for expectile or quantile regression with $\tau \neq 1/2$, the regression function is $u(x)+c\sigma (x)$ for some constant $c\neq 0$. Clearly, the resulting regression function class is as complex as $\Sigma$ in terms of metric entropy order and consequently we know there cannot exist any convergent estimator of the regression function in the uniform sense no matter how $\tau$ is close to $1/2$. In this example, the estimand of the parametric mean function is an ``outlier'' in the grand scheme of expectile or quantile  (or more generally $\psi$-tile) regression that intends to offer a broader understanding than a single sliced view at a fixed and unusually lucky $\tau$. The above said, the study of minimax estimation at an ``outlier'' $\tau$ is certainly of interest on its own, which we do not pursue in this work. 

\subsection{Estimators with minimax optimality}\label{minimax_estimator}
In this paper, our focus is on developing a general theoretical understanding on unifying minimax mean, expectile, and quantile regressions in the location-scale nonparametric framework. As such, we highlight the most essential aspects in their generality on the determination of the minimax rate of convergence of regression estimation. In particular, as far as the nature of the regression function class is concerned, it turns out that its metric entropy order alone is enough and other characteristics of the function class are irrelevant as far as  the minimax rate of convergence is concerned. For establishing the minimax rate of convergence, we need to develop estimators of the regression function whose worst-case risks match the minimax lower bounds in order. To this end, given the generality of our results that do not require knowledge of the regression function class beyond the metric entropy order, the construction of the estimators is of a theoretical nature that is in tune with the abstract metric entropy characterization of the class $\cal{U}$. Therefore, not surprisingly, these 
theory-oriented $\epsilon$-net based estimators are not easy for real implementation. In a specific application, typically one works with a concrete function class (or classes) with known helpful characterizations of the regression functions (such as smoothness, monotonicity, additivity, etc.). Then while our theory provides a precise understanding of the minimax rate of convergence, the knowledge of the specific  function classes can be naturally taken into account in deriving practical estimators that achieve the minimax rate of convergence. An example is given below to illustrate this point.     

Consider expectile regression under the asymmetric Laplace error distribution, with the expectile function $f$ assumed to be bounded and belong to the Besov class $B_{2,\infty}^{\alpha}$ on $[-1,1]^d$, whose metric entropy order is $\epsilon^{-d/\alpha}$, where $\alpha >0$ is the smoothness parameter (see, e.g., \cite{DeVore}). Based on \Cref{th1} and \Cref{cor1} in this paper, we know the minimax rate of convergence for estimating the regression function is $n^{-\frac{2\alpha}{2\alpha+d}}$ (and the same rate also holds under the other error distributions studied in \Cref{error_qr}). As pointed out earlier, our $\epsilon$-net based estimators in deriving the minimax result is not practical in its implementation.

\cite{Farooq} proposed a support vector machine (SVM) method for estimation. Given a regularization parameter $\lambda>0$, a given $\tau$ in $(0,1)$ and a reproducing kernel Hilbert space $H$ on $[-1,1]^d$ with a bounded, measurable kernel $k: [-1,1]^d\times [-,1]^d\to \mathbb{R}$, the SVM method builds an estimate $u_\lambda$ by solving an optimization problem of the form
\begin{equation}\label{svm}
	u_\lambda=\arg \underset{u\in H}{\min}\big ( \lambda \lVert u\rVert_H^2 + \frac{1}{n}\sum_{i=1}^nL (y_i,u(x_i)\big),
\end{equation}
where $L$ is the ALS for ER.

They obtained an algorithm for solving \eqref{svm} using Gaussian radial basis kernels and established the learning rate $n^{-\frac{2\alpha}{2\alpha+d}}\log n$ of the above estimator for any error distribution decaying exponentially fast. Therefore, their practically feasible estimator achieves the minimax rate of convergence up to an extra logarithmic factor. It remains to be seen how to come up with a computationally feasible algorithm to achieve the minimax rate of convergence exactly, especially when $\alpha$ is unknown.

\section{Conclusion}\label{conclusion}

Location-scale models cover familiar mean, quantile and expectile regression problems that are widely used in econometric and statistical applications. While a general understanding on minimax rate for normal mean regression is available in the literature, prior to our work, little was given on the minimax optimal rates for estimating a conditional quantile or expectile function in a general function class under non-Gaussian errors.  In this paper, in the location-scale model framework, we have derived the minimax rates of convergence for regression learning under the square $L_2$ loss for nonparametric function classes. Not surprisingly, metric entropy continues to play a central role in the determination of the optimal rate of convergence. The results are readily applicable for various function classes that are commonly considered in data science even when the error distribution does not have a finite mean.

A limitation of our work is that it deals with i.i.d.\ observations. \cite{ChenXiaohong} showed that in some settings similar regression results hold under weakly dependent random errors. Under Gaussianity, effects of short- and long-range dependences of the errors are well understood for minimax nonparametric regression with homoscedastic errors (see, e.g., \cite{Yang2001}, \cite{YangYang2019}). It remains open to understand if a general error distribution with heteroscedasticity can be handled in the same spirit that short-range dependences do not damage the minimax rate of convergence for $\psi$-tile regression but long-range dependence of the errors, when strong enough relative to the massiveness of the regression function class, can substantially slow down the rate of convergence as shown in the aforementioned articles. Another limitation of our work is that unlike results in \cite{Horowitz2} and  \cite{XiaohongChen}, our framework does not deal with endogeneity in the regression relationship. It remains to be seen how to modify our  minimax upper and lower bounding techniques for identifying the minimax rate of convergence in a generality similar to the present work when instrumental variables are included.

\appendix
\numberwithin{equation}{section}
\section{Appendix}\label{appendix}
\subsection{A lemma on Kullback-Leibler divergence, Hellinger distance and $L_2$ distance} We need a lemma for the proof of \Cref{th1}.
For the following result, $\mu$ is taken to be a constant and $f_{\mu,\sigma} (y)=\frac{1}{\sigma}f(\frac{y-\mu}{\sigma})$.
    
\begin{lemma} \label{relating-to-L2} For the error distribution family, under \Cref{con1} and \Cref{con2} respectively, we have the following characterizations of the K-L divergence and Hellinger distance.
    \begin{enumerate}
    	\item Under \Cref{con2}, for any $J>0$, $\overline{\sigma}>1$ and $\underline{\sigma}=\overline{\sigma}^{-1}$, we can find a constant $\underline{C}>0$ (which may depend on $f$, $J$ and $\overline{\sigma}$) such that
    	\begin{equation*}
    		d^2_H(f_{\mu_1,\sigma_1},f_{\mu_{2},\sigma_2})\ge \underline{C}(\mu_1-\mu_{2})^2
    	\end{equation*}
    	holds for all $\mu_1,\mu_2\in [-J,J]$ and $\sigma_1,\sigma_2\in [\underline{\sigma},\overline{\sigma}]$.
        \item Under \Cref{con1}, for any $J>0$, $\overline{\sigma}>1$ and $\underline{\sigma}=\overline{\sigma}^{-1}$, there exists a constant $\overline{C}>0$ (which may depend on $f$, $J$ and $\overline{\sigma}$) such that
    	\begin{equation*}
    		D(f_{\mu_1,\sigma_1}||f_{\mu_{2},\sigma_2})\leq \overline{C}[(\mu_1-\mu_{2})^2+(\sigma_1-\sigma_2)^2]
    	\end{equation*}
    	holds for all $\mu_1,\mu_2\in [-J,J]$ and $\sigma_1,\sigma_2\in [\underline{\sigma},\overline{\sigma}]$.
    \end{enumerate}
\end{lemma}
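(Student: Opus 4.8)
The plan is to reduce both inequalities to comparisons of the standardized density $f_{0,1}=f$ with a \emph{single} shifted–rescaled density $f_{\eta,\sigma}$, and then to patch a local estimate near $(\eta,\sigma)=(0,1)$ to a compactness argument. The change of variables $z=(y-\mu_1)/\sigma_1$ carries the probability measure $f_{\mu_1,\sigma_1}(y)\,dy$ to $f(z)\,dz$ and carries the likelihood ratio $f_{\mu_1,\sigma_1}(y)/f_{\mu_2,\sigma_2}(y)$ to $f_{0,1}(z)/f_{\eta,\sigma}(z)$, where $\eta=(\mu_2-\mu_1)/\sigma_1$ and $\sigma=\sigma_2/\sigma_1$. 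Since $d_H^2$ and $D(\cdot\,||\,\cdot)$ are invariant under a common measure-preserving reparametrization, this yields $d_H^2(f_{\mu_1,\sigma_1},f_{\mu_2,\sigma_2})=d_H^2(f_{0,1},f_{\eta,\sigma})$ and $D(f_{\mu_1,\sigma_1}||f_{\mu_2,\sigma_2})=D(f_{0,1}||f_{\eta,\sigma})$. Moreover $\eta^2=(\mu_1-\mu_2)^2/\sigma_1^2$ and $(1-\sigma)^2=(\sigma_1-\sigma_2)^2/\sigma_1^2$, so, using $\underline{\sigma}\le\sigma_1\le\overline{\sigma}$, both assertions are equivalent to uniform bounds over the compact set $K=\{(\eta,\sigma):|\eta|\le 2J/\underline{\sigma},\ \overline{\sigma}^{-2}\le\sigma\le\overline{\sigma}^{2}\}$ (which contains $(0,1)$), namely (i) $d_H^2(f_{0,1},f_{\eta,\sigma})\ge c\,\eta^2$ and (ii) $D(f_{0,1}||f_{\eta,\sigma})\le c'(\eta^2+(1-\sigma)^2)$; then item~1 follows with $\underline{C}=c/\overline{\sigma}^2$ and item~2 with $\overline{C}=c'\overline{\sigma}^2$.

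For (i), write $\rho=(\eta^2+(1-\sigma)^2)^{1/2}$. By \Cref{con2} and Hellinger differentiability, $\xi_{\eta,\sigma}=\xi_{0,1}+(\eta,\sigma-1)^\prime\dot{\xi}_{0,1}+r_{\eta,\sigma}$ with $\lVert r_{\eta,\sigma}\rVert=o(\rho)$. Let $I$ be the Gram matrix of the two components of $\dot{\xi}_{0,1}$ in $L_2$; their linear independence makes $I$ nonsingular, hence positive definite, so $\lVert(\eta,\sigma-1)^\prime\dot{\xi}_{0,1}\rVert^2=(\eta,\sigma-1)\,I\,(\eta,\sigma-1)^\prime\ge\lambda_{\min}(I)\rho^2$. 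Choosing a neighborhood $U$ of $(0,1)$ on which $\lVert r_{\eta,\sigma}\rVert\le\tfrac12\sqrt{\lambda_{\min}(I)}\,\rho$, the triangle inequality gives $d_H(f_{0,1},f_{\eta,\sigma})\ge\tfrac12\sqrt{\lambda_{\min}(I)}\,\rho\ge\tfrac12\sqrt{\lambda_{\min}(I)}\,|\eta|$, i.e. $d_H^2\ge\tfrac14\lambda_{\min}(I)\,\eta^2$ on $U$. On $K\setminus U$ the map $(\eta,\sigma)\mapsto d_H^2(f_{0,1},f_{\eta,\sigma})$ is continuous (from $L_2$-continuity of the translation and dilation action on $\sqrt{f}$) and its only zero on $K$ is $(0,1)\in U$ by identifiability of the error distribution, which is immediate for the densities treated here; hence it is bounded below by some $c_4>0$ on $K\setminus U$, and since $|\eta|\le 2J/\underline{\sigma}$ there, $d_H^2\ge c_4\ge c_4(\underline{\sigma}/2J)^2\eta^2$. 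Taking $c=\min(\tfrac14\lambda_{\min}(I),c_4(\underline{\sigma}/2J)^2)$ proves (i); note that when $\mu_1=\mu_2$ (i.e.\ $\eta=0$) item~1 is vacuous, so only non-vanishing of $d_H^2$ at points with $\eta\ne0$ is actually needed.

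For (ii), \Cref{con1} directly gives a neighborhood $V$ of $(0,1)$ and a constant $c_2$ with $D(f_{0,1}||f_{\eta,\sigma})\le c_2(\eta^2+(1-\sigma)^2)$ on $V$. On the compact set $K\setminus V$ one has $\eta^2+(1-\sigma)^2\ge\delta^2$ for some $\delta>0$, while by the standing assumption $D(f_{0,1}||f_{\eta,\sigma})$ is finite everywhere and continuous in $(\eta,\sigma)$, hence bounded by some $c_3<\infty$ on $K\setminus V$; so $D(f_{0,1}||f_{\eta,\sigma})\le c_3\le(c_3/\delta^2)(\eta^2+(1-\sigma)^2)$ there. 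Taking $c'=\max(c_2,c_3/\delta^2)$ proves (ii), and translating back gives item~2.

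The change of variables and the bookkeeping of constants are routine; the one genuinely delicate point is upgrading the local estimates — which come directly from \Cref{con1} and \Cref{con2} — to bounds that are uniform over the compact parameter set $K$. For item~2 this rests on the assumed continuity and finiteness of the K-L divergence; for item~1 it additionally uses continuity of the squared Hellinger distance in the location–scale parameters together with the (mild) non-degeneracy that $d_H^2$ stays positive away from $(0,1)$. This patching is the main, though modest, obstacle.
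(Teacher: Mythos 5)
Your proposal is correct and follows essentially the same route as the paper's proof: reduce by location--scale invariance to comparing $f_{0,1}$ with $f_{\eta,\sigma}$, use Hellinger differentiability with linear independence of the derivative components (resp.\ Condition 1) for the local quadratic bounds near $(0,1)$, and patch with continuity, identifiability and compactness away from $(0,1)$. The only difference is presentational: you argue directly with an explicit neighborhood and a compactness infimum (and a Gram-matrix/minimum-eigenvalue formulation of the local Hellinger bound), whereas the paper phrases the same argument as a contradiction along sequences.
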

\begin{proof}      

     We first prove part 1. Suppose the conclusion does not hold. Then, there exists a sequence $C_n\to 0$, and we can find $\mu_{1n},\mu_{2n}$ in $[-J,J]$ and $\sigma_{1n},\sigma_{2n}\in [\underline{\sigma},\overline{\sigma}]$ such that
     \begin{equation}
     \label{1}
    		d^2_H(f_{\mu_{1n},\sigma_{1n}},f_{\mu_{2n},\sigma_{2n}})< C_n(\mu_{1n}-\mu_{2n})^2.
     \end{equation}
     %

     Since Hellinger distance is location and scale invariant, we have
     \begin{equation*}
     	d^2_H(f_{{\mu_{1n}},\sigma_{1n}},f_{\mu_{2n},\sigma_{2n}})=d^2_H(f_{{0},1},f_{\kappa_n,\varpi_n}),
     \end{equation*}
     where $\kappa_n=\frac{\mu_{2n}-\mu_{1n}}{\sigma_{1n}}$ and $\varpi_n=\frac{\sigma_{2n}}{\sigma_{1n}}$.

     Since $\{(\kappa_n,\varpi_n), n\ge 1\}$ are bounded, we must have a convergent subsequence $n_k$ such that $\kappa_{n_k}\to \kappa$ and $\varpi_{n_k}\to \varpi$ for some $\kappa\in [-\frac{2J}{\underline{\sigma}},\frac{2J}{\underline{\sigma}}]$ and $\varpi\in [\underline{\sigma}/\overline{\sigma},\overline{\sigma}/\underline{\sigma}]$. If $\kappa_{n_k}\to \kappa \neq 0$ or $\varpi_{n_k}\to\varpi\neq 1$, since our location-scale family is continuous under the Hellinger distance, together with that our location-scale family is identifiable, we have
     \begin{equation*}
     	\underset{n\to\infty}{\lim}d^2_H(f_{0,1},f_{\kappa_{n_k},\varpi_{n_k}})=d^2_H(f_{0,1},f_{\kappa,\varpi})>0,
     \end{equation*}
     which contradicts \eqref{1} since $C_n\to 0$. Thus we must have $\kappa_{n_k}\to \kappa=0$ and $\varpi_{n_k}\to \varpi=1$.

    In the following, we denote $\sqrt{f_{\kappa,\varpi}}$ as $\xi_{\kappa,\varpi}$. Based on \Cref{con2}, taking Hellinger derivative of $\xi_{\kappa_{n_k},\varpi_{n_k}}$ at $\xi_{0,1}$ we have
    \begin{equation*}
	\xi_{\kappa_{n_k},\varpi_{n_k}}=\xi_{0,1}+(\kappa_{n_k},\varpi_{n_k}-1)\dot{\xi}_{0,1}+r_{\kappa_{n_k},\varpi_{n_k}},
    \end{equation*}
    where $\dot{\xi}_{0,1}=(\dot{\xi}_{(0,1)_\kappa},\dot{\xi}_{(0,1)_\varpi})^T$ is a two dimensional vector, $\frac{\left\|r_{\kappa_{n_k},\varpi_{n_k}}\right\|}{\sqrt{\kappa_{n_k}^2+(\varpi_{n_k}-1)^2}}=o(1)$, $\lVert\dot{\xi}_{(0,1)_\kappa}\rVert>0$ and $\lVert\dot{\xi}_{(0,1)_\varpi}\rVert>0$. Then we obtain
    \begin{equation*}
    \begin{aligned}
    d_H^2(f_{0,1},f_{\kappa_{n_k},\varpi_{n_k}})
    &=\int(\xi_{{0},1}-\xi_{\kappa_{n_k},\varpi_{n_k}})^2dy \\
    &=\int(\kappa_{n_k}\dot{\xi}_{(0,1)_\kappa}+(\varpi_{n_k}-1)\dot{\xi}_{(0,1)_\varpi}+r_{\kappa_{n_k},\varpi_{n_k}})^2dy \\
    &=\int(\kappa_{n_k}\dot{\xi}_{(0,1)_\kappa}+(\varpi_{n_k}-1)\dot{\xi}_{(0,1)_\varpi})^2dy+\int(r_{\kappa_{n_k},\varpi_{n_k}}(y))^2dy\\
    &~~~~+2\int(\kappa_{n_k}\dot{\xi}_{(0,1)_\kappa}+(\varpi_{n_k}-1)\dot{\xi}_{(0,1)_\varpi})r_{\kappa_{n_k},\varpi_{n_k}}dy.\\
    \end{aligned}
    \end{equation*} 
    Since $\dot{\xi}_{0,1_\kappa}(y)$ and $\dot{\xi}_{0,1_\varpi}(y)$ are not linear related, as $k \rightarrow \infty$, we get
    $$
    		\int(\kappa_{n_k}\dot{\xi}_{(0,1)_\kappa}+(\varpi_{n_k}-1)\dot{\xi}_{(0,1)_\varpi})^2dy \asymp \kappa_{n_k}^2+(\varpi_{n_k}-1)^2 \succeq \kappa_{n_k}^2.
$$

    By Cauchy-Schwarz inequality, we have 
    \begin{equation*}
    \begin{aligned}
    	    2\int(\kappa_{n_k}\dot{\xi}_{(0,1)_\kappa}+(\varpi_{n_k}-1)\dot{\xi}_{(0,1)_\varpi})r_{\kappa_{n_k},\varpi_{n_k}}dy&\le 2 \lVert \kappa_{n_k}\dot{\xi}_{(0,1)_\kappa}+(\varpi_{n_k}-1)\dot{\xi}_{(0,1)_\varpi}\rVert\lVert r_{\kappa_{n_k},\varpi_{n_k}}\rVert\\
    	    &=O(\sqrt{\kappa_{n_k}^2+(\varpi_{n_k}-1)^2})*o((\sqrt{\kappa_{n_k}^2+(\varpi_{n_k}-1)^2)})\\
    	    &=o(\kappa_{n_k}^2+(\varpi_{n_k}-1)^2).
    \end{aligned}
    \end{equation*}
    Therefore, $d_H^2(f_{0,1},f_{\kappa_{n_k},\varpi_{n_k}})$ is at least of order $\kappa_{n_k}^2$, which contradicts \cref{1}. This proves the first part of the lemma.
    
    Now, we prove the second part. Suppose the conclusion does not hold. Then, there exists a sequence $C_n\to \infty$, and we can find $\mu_{1n},\mu_{2n}$ in $[-J,J]$ and $\sigma_{1n},\sigma_{2n}\in [\underline{\sigma},\overline{\sigma}]$ such that
     \begin{equation*}
    		D(f_{\mu_{1n},\sigma_{1n}}||f_{\mu_{2n},\sigma_{2n}})> C_n[(\mu_{1n}-\mu_{2n})^2+(\sigma_{1n}-\sigma_{2n})^2].
     \end{equation*}

     We also have
     \begin{equation*}
     	D(f_{{\mu_{1n}},\sigma_{1n}}||f_{\mu_{2n},\sigma_{2n}})=D(f_{{0},1}||f_{\kappa_n,\varpi_n}),
     \end{equation*}
     where $\kappa_n=\frac{\mu_{2n}-\mu_{1n}}{\sigma_{1n}}$ and $\varpi_n=\frac{\sigma_{2n}}{\sigma_{1n}}$.

     Since $\{(\kappa_n,\varpi_n), n\ge 1\}$ are bounded, we must have a convergent subsequence $n_l$ such that
     \begin{equation*}
     	D(f_{{0},1}||f_{\kappa_{n_l},\varpi_{n_l}})>C_{n_l}[\kappa_{n_l}^2+(1-\varpi_{n_l}^2)],
     \end{equation*}
     which contradicts \Cref{con1}, based on the earlier arguments.
\end{proof}

\subsection{Proof of \Cref{th1}}
\begin{proof}
	To obtain the minimax rate of convergence, we shall derive lower and upper bounds that match in order under the squared $L_2$ distance.

	Let $V_K(\varepsilon)$ denote the covering $\varepsilon$-entropy of $\{p_{\mu,\sigma,h}:\mu\in \mathcal{U},\sigma\in \Sigma\}$ under the square root of K-L divergence (recall that the covering $\varepsilon$-entropy is the logarithm of the minimum cardinality of $\varepsilon$-nets), where $p_{\mu,\sigma,h}$ is the joint density of $(X,Y)$ with marginal density $h(x)$ for $X$, regression function $\mu(x)$ and scale function $\sigma(x)$. Let $M_{h_0}(\varepsilon;\mathcal{U})$ be the $\varepsilon$-packing entropy of $\mathcal{U}$ under the $L_2(h_0)$ distance. Based on part 2 of \Cref{relating-to-L2}, under the assumptions that the regression functions in the class $\mathcal{U}$ are uniformly bounded and the scale functions in the class $\Sigma$ are uniformly upper and lower bounded, we know 
	\begin{equation}\label{kl_upper}
		\begin{aligned}
			D(p_{\mu_1,\sigma_1,h}|| p_{\mu_2,\sigma_2,h})&=D(h(x)f_{\mu_1,\sigma_1}(y)|| h(x)f_{\mu_2,\sigma_2}(y))\\
			&\le c(\left\|{\mu_1-\mu_2}\right\|^2_{2,h}+\left\|{\sigma_1-\sigma_2}\right\|^2_{2,h})\\
			&\le cC(\left\|{\mu_1-\mu_2}\right\|^2_{2,h_0}+\left\|{\sigma_1-\sigma_2}\right\|^2_{2,h_0}),
			\end{aligned}
	\end{equation}
	where the constant $c$ depends on the density $f$, constant $J=\underset{\mu\in\mathcal{U}}{\sup}\lVert\mu\rVert_\infty$ and $\overline{\sigma}$ (recall $1/\overline{\sigma}\le \sigma(x)\le \overline{\sigma}$ for all $\sigma\in \Sigma$), and $C$ is the constant in \Cref{con4}.
	
	Therefore, we know $V_K(\varepsilon)$ is upper bounded by order of $M_{h_0}(\tilde{c}\varepsilon;\mathcal{U})+M_{h_0}(\tilde{c}\varepsilon;\Sigma)$ for some constant $\tilde{c}>0$, which has the same order as $M_{h_0}(\varepsilon;\mathcal{U})$ by \Cref{con3}. In addition, we can construct a covering set for $\{\mathcal{U},\Sigma\}$ under $h_0$. Let $\varepsilon_n$ and $\underline\varepsilon_{n}$ be determined by
	\begin{equation*}
		V_K(\varepsilon_n)=n\varepsilon_n^2,
	\end{equation*}
	\begin{equation*}
		M_{h_0}(\underline \varepsilon_{n};\mathcal{U})=4n\varepsilon_n^2+2\log 2.
	\end{equation*}

	Below we first give a risk upper bound of relevant estimation in terms of the Hellinger distance and then show the squared $L_2(h)$ risk on estimation of $\mu$ is actually upper bounded by a multiple of the Hellinger risk.

    We connect the joint density estimation with regression. Suppose we have a density estimator $\hat p_n(x,y)$ of the joint density of $(\pmb X, \pmb Y)$, which is of the form $h(x)\hat q_n(y|x)$. Let $(\hat \mu,\hat\sigma)$ be obtained by minimizing $d_H\big(\hat q_n(y|x),\frac{1}{\tilde\sigma(x)}f(\frac{y-\tilde\mu(x)}{\tilde\sigma(x)})\big)$ over $\underline{\sigma}\le \tilde\sigma(x)\le \overline{\sigma}$ and $-J\le \tilde\mu(x)\le J$ at each $x\in \cal X$. Then because $d_H(p_{\mu,\sigma,h},\hat p_n)\ge d_H(p_{\hat\mu,\hat\sigma, h},\hat p_n)$, we have
$$d_H(p_{\mu,\sigma,h},p_{\hat\mu,\hat\sigma, h})\le d_H(p_{\mu,\sigma,h},\hat p_n)+d_H(p_{\hat\mu,\hat\sigma,h},\hat p_n)
    \le 2d_H(p_{\mu,\sigma,h},\hat p_n).$$  
    
    We can construct an estimator $\hat p_n$ satisfying $\sup_{\mu\in\mathcal{U},\sigma\in\Sigma,h\in\mathcal{H}}\mathbb{E}D(p_{\mu,\sigma,h}||\hat{p}_n)\leq 2\varepsilon_n^2$. To be more specific on how to construct the estimator $\hat{p}_n$, let $A_n$ and $B_n$ be $(c^*\varepsilon_n)$-nets of $\mathcal{U}$ and $\Sigma$, respectively, under $\lVert\cdot\rVert_{2,h_0}$, where $c^*$ is a constant such that for any $(\mu,\sigma)\in \mathcal{U}\times \Sigma$, there exists $\tilde{\mu}\in A_n$ and $\tilde\sigma \in B_n$ s.t. $D(p_{\mu,\sigma,h}||p_{\tilde{\mu},\tilde{\sigma},h})\le \varepsilon^2_n$ based on \eqref{kl_upper}. Then, $G_{\varepsilon_n}=A_n\times B_n$ is an $\varepsilon_n$-net of $\{\mathcal{U},\Sigma\}$ under the square root of K-L divergence. For each pair of $(\mu,\sigma)\in \mathcal{U}\times \Sigma$, there exists $p_j:=p_{\mu_j,\sigma_j,h}$ with $(\mu_j,\sigma_j)\in G_{\varepsilon_n}$ such that $D(p_{\mu,\sigma,h}||p_j)\le \varepsilon_n^2$. Let $ p^n:=p(x_1,y_1,x_2,y_2,\cdots,x_n,y_n)=\frac{1}{\lvert G_{\varepsilon_n}\rvert}\sum_{i=1}^{\lvert G_{\varepsilon_n}\rvert} p_i(x_1,y_1)p_i(x_2,y_2)\cdots p_i(x_n,y_n)$, we have 
    \begin{equation*}
    	\begin{aligned}
    		&D(p^n_{\mu,\sigma,h}||p^n)&\\
    		&=\int p_{\mu,\sigma,h}(x_1,y_1)\cdots p_{\mu,\sigma,h}(x_n,y_n)\log \frac{p_{\mu,\sigma,h}(x_1,y_1)\cdots p_{\mu,\sigma,h}(x_n,y_n)}{\frac{1}{{\lvert G_{\varepsilon_n}\rvert}}\sum_{i=1}^{\lvert G_{\varepsilon_n}\rvert} p_i(x_1,y_1)\cdots p_i(x_n,y_n)}dx_1dy_1\cdots dx_ndy_n\\
    		&\le \int p_{\mu,\sigma,h}(x_1,y_1)\cdots p_{\mu,\sigma,h}(x_n,y_n)\log \frac{p_{\mu,\sigma,h}(x_1,y_1)\cdots p_{\mu,\sigma,h}(x_n,y_n)}{\frac{1}{{\lvert G_{\varepsilon_n}\rvert}} p_j(x_1,y_1)\cdots p_j(x_n,y_n)}dx_1dy_1\cdots dx_ndy_n\\
    		&=\log {\lvert G_{\varepsilon_n}\rvert}\\
    		&~~~~+\int p_{\mu,\sigma,h}(x_1,y_1)\cdots p_{\mu,\sigma,h}(x_n,y_n)\log \frac{p_{\mu,\sigma,h}(x_1,y_1)\cdots p_{\mu,\sigma,h}(x_n,y_n)}{p_j(x_1,y_1)\cdots p_j(x_n,y_n)}dx_1dy_1\cdots dx_ndy_n\\
    		&=\log {\lvert G_{\varepsilon_n}\rvert}+nD(p_{\mu,\sigma,h}||p_j)\\
    		&\le V_K(\varepsilon_n)+n\varepsilon_n^2.
    	\end{aligned}
    \end{equation*}

    Note $p(x_1,y_1,\cdots,x_n,y_n)=\tilde{p}_1(x_1,y_1)\tilde{p}_2(x_2,y_2|x_1,y_1)\cdots \tilde{p}_n(x_n,y_n|x^{n-1},y^{n-1})$, where $x^{k-1}=(x_1,\cdots,x_{k-1})$ and $y^{k-1}=(y_1,\cdots,y_{k-1})$ for $2\le k\le n$. Let $\hat{p}_n=\frac{1}{n}[\tilde{p}_1(x,y)+\tilde{p}_2(x,y|x_1,y_1)+\cdots +\tilde{p}_n(x,y|x^{n-1},y^{n-1})]$, we have
    \begin{equation*}
    	\begin{aligned}
    		\mathbb{E}D(p_{\mu,\sigma,h}||\hat{p}_n)&=\mathbb{E}\int p_{\mu,\sigma,h}\log\frac{p_{\mu,\sigma,h}}{\hat{p}_n}dxdy\\
    		&\le\mathbb{E}\frac{1}{n}\sum_{i=1}^n\int p_{\mu,\sigma,h}(x_i,y_i)\log\frac{p_{\mu,\sigma,h}(x_i,y_i)}{\tilde{p}_i(x_i,y_i|x^{i-1},y^{i-1})}dx_idy_i\\
    		&=\frac{1}{n}\sum_{i=1}^n\mathbb{E}\log\frac{p_{\mu,\sigma,h}(x_i,y_i)}{\tilde{p}_i(x_i,y_i|x^{i-1},y^{i-1})}\\
    		&=\frac{1}{n}\mathbb{E}\log\frac{p_{\mu,\sigma,h}(x_1,y_1)\cdots p_{\mu,\sigma,h}(x_n,y_n)}{\tilde{p}_1(x_1,y_1)\tilde{p}_2(x_2,y_2|x_1,y_1)\cdots \tilde{p}_n(x_n,y_n|x^{n-1},y^{n-1}))}\\
    		&=\frac{1}{n}D(p^n_{\mu,\sigma,h}||p^n)\\
    		&\le \frac{V_K(\varepsilon_n)}{n}+\varepsilon_n^2\\
    		&=2\varepsilon_n^2.
    	\end{aligned}
    \end{equation*}
    
    It is important to note that $\hat p_n$ is of the form $h(x)\hat q_n(y|x)$ and $h$ needs not to be known in the construction of $\hat q_n(y|x)$.
    
    Since the Hellinger distance is upper bounded by the square root of K-L divergence, we have $\sup_{\mu\in\mathcal{U},\sigma\in \Sigma,h\in\mathcal{H}}\mathbb{E}d_H^2(p_{\mu,\sigma,h},\hat{p}_n)\leq 2\varepsilon_n^2$. It follows that $\sup_{\mu\in\mathcal{U},\sigma\in\Sigma,h\in\mathcal{H}}\mathbb{E}d_H^2(p_{\mu,\sigma,h},p_{\hat{\mu},\hat\sigma,h}) \le 4\sup_{\mu\in\mathcal{U},\sigma\in\Sigma,h\in\mathcal{H}}\mathbb{E}d_H^2(p_{\mu,\sigma,h},\hat{p}_n)\leq 8\varepsilon_n^2$.

    Now we show that the squared $L_2$ distance is upper bounded by a multiple of the Hellinger distance through Hellinger differentiability. By part 1 of \Cref{relating-to-L2} we have for some $\underline{C}>0$,
    \begin{equation*}
    	d_H^2(f_{\mu (x),\sigma(x)},f_{\hat{\mu} (x),\hat\sigma(x)})\ge\underline{C}(\mu(x)-\hat{\mu}_n(x))^2.
    \end{equation*}
    
    \noindent
    Furthermore, we have
    \begin{equation*}
    	\begin{aligned}
    		d_H^2(p_{\mu,\sigma,h},\hat p_n)&\ge \frac{1}{4}d_H^2(p_{\mu,\sigma,h},p_{\hat{\mu},\hat\sigma, h})\\
    		&=\frac{1}{4}\int d_H^2(f_{\mu (x),\sigma(x)},f_{\hat{\mu} (x),\hat\sigma(x)})h(x)dx\\
    		&\ge \frac{1}{4}\underline{C}\int (\mu(x)-\hat{\mu}_n(x))^2h(x)dx\\
    		&=\frac{1}{4}\underline{C}\lVert\mu-\hat\mu\rVert_{2,h}^2.
    	\end{aligned}
    \end{equation*}
    
    From all above, we conclude
    	$\mathbb{E}\lVert\mu-\hat\mu\rVert_{2,h}^2\preceq \varepsilon_n^2$.

	Then, let's derive the lower bound. It suffices to assume $\sigma$ is a fixed constant assumed to be in $\Sigma$ and focus on $h_0$ only, because the lower bound already matches the upper bound in order. With more choices in $\sigma$ and $h$, the lower bound cannot be smaller. Let $N_{\underline{\varepsilon}_n}$ be the $\underline{\varepsilon}_n$-packing set of $\mathcal{U}$ under the $L_2(h_0)$ distance.  By Fano's inequality (See Theorem 1 in \cite{Yang}), we have
	\begin{equation*}
		\underset{\hat\mu}{\inf}~ \underset{\mu\in \mathcal{U}}{\sup}\mathbb{E}[\lVert\mu-\hat\mu\rVert_{2,h_0}^2] \ge \frac{1}{4}(1-\frac{I(N_{\underline{\varepsilon}_n},Y^n)+\log 2}{\log \lvert N_{\underline{\varepsilon}_n}\rvert})\underline{\varepsilon}_n^2,
	\end{equation*}
	where $I(N_{\underline{\varepsilon}_n},Y^n)$ is the Shannon's mutual information between the uniformly distributed random parameter on $N_{\underline{\varepsilon}_n}$ and the random sample. We have
	\begin{equation*}
		\begin{aligned}
			I(N_{\underline{\varepsilon}_n},Y^n)=\sum_{\mu\in N_{\underline{\varepsilon}_n}} \omega(\mu)\int p(y^n|\mu)\log\frac{p(y^n|\mu)}{p_\omega(y^n)}dy^n,
		\end{aligned}
	\end{equation*}
	in which $\omega(\mu)$ is the uniform distribution on $\mu\in N_{\underline{\varepsilon}_n}$ and $p_\omega(y^n)=\sum_{\mu\in N_{\underline{\varepsilon}_n}} \omega(\mu)p(y^n|\mu)$ is the marginal distribution of $y$. Since the Bayes mixture density $p_\omega(y^n)$ minimize $I(N_{\underline{\varepsilon}_n},Y^n)$ over other candidates of marginal distribution $q(y^n)$, we have

		$$	I(N_{\underline{\varepsilon}_n},Y^n)\le \sum_{\mu\in N_{\underline{\varepsilon}_n}} \omega(\mu)\int p(y^n|\mu)\log\frac{p(y^n|\mu)}{q(y^n)}dy^n
			\le \max_{\mu\in N_{\underline{\varepsilon}_n}} D(p_\mu^n||q^n),$$
	where we denote $p_{\mu,\sigma,h_0}$ as $p_\mu$, because $\sigma$ and $h_0$ are fixed.

	Let $\tilde\omega$ be the uniform distribution on $G_{\varepsilon_n}$ and let $q(y^n)=p_{\tilde{\omega}}(y^n)=\sum_{\mu\in N_{\underline{\varepsilon}_n}} \tilde\omega(\mu)p(y^n|\mu)$, we have that for any $\mu\in \mathcal{U}$,
	\begin{equation*}
		\begin{aligned}
			D(p_\mu^n||q^n)&= E\log\frac{p(y^n|\mu)}{\frac{1}{\lvert G_{\varepsilon_n}\rvert}\sum_{\mu^\prime\in G_{\varepsilon_n}}p(y^n|\mu^\prime)}\\
			&\le \log \lvert G_{\varepsilon_n}\rvert + E\log\frac{p(y^n|\mu)}{p(y^n|\mu^\prime)}\\
			&= \log \lvert G_{\varepsilon_n}\rvert + D(p_\mu^n||p_{\mu^\prime}^n)\\
			&\le V_K(\varepsilon_n)+n\varepsilon_n^2.
		\end{aligned}
	\end{equation*}

	Hence, we have
	\begin{equation*}
		1-\frac{I(N_{\underline{\varepsilon}_n},Y^n)+\log 2}{\log \lvert N_{\underline{\varepsilon}_n}\rvert}\ge \frac{1}{2},
	\end{equation*}
	which leads to
	\begin{equation*}
		\underset{\hat\mu}{\inf}~ \underset{\mu\in \mathcal{U}}{\sup}\mathbb{E}[\lVert\mu-\hat\mu\rVert_{2,h_0}^2] \ge \frac{1}{8}\underline{\varepsilon}_n^2.
	\end{equation*}

	It follows
	\begin{equation*}
		\underset{\hat\mu}{\inf}~ \underset{\mu\in \mathcal{U},\sigma\in \Sigma, h\in \mathcal{H}}{\sup}\mathbb{E}[\lVert\mu-\hat\mu\rVert_{2,h}^2] \ge \frac{1}{8}\underline \varepsilon_{n}^2.
	\end{equation*}
	
	Under Conditions 1, 2 and 4, we know $V_K(\varepsilon_n)$ and $M_{h_0}(\varepsilon_n; \mathcal{U})$ are actually of the same order and the resulting $\varepsilon_n$ and $\underline{\varepsilon}_n$ converge at the same rate. This completes the proof of \Cref{th1}. 
\end{proof}

\subsection{Proof of Corollary 4}
\begin{proof}
    We define the parametric class
    \begin{equation*}
	\mathcal{B}=\{\pmb\beta^T\pmb x |\pmb x\in [0,1]^d, \pmb\beta\in \mathbb{R}^d, \lVert \pmb\beta\rVert_2=1\}.
    \end{equation*}

    Let $\mathcal{S}$ be an $\frac{\varepsilon}{2p}$-net for the H\"{o}lder class $\Lambda^{\alpha,\gamma}(L)$ under the $L_\infty$ distance with respect to the Lebesgue measure. Also, let $\mathcal{T}$ be an $(\frac{\varepsilon}{2pL_*})^{\frac{1}{\gamma}}$-net for the parametric class $\mathcal{B}$ under the $L_2$ distance with respect to the probability measure on $\pmb x$, in which $L_*$ is a constant and will be given later.
   
    Consider the class ${\cal{Q}}=\{\lambda_1(\pmb\beta_{(1)}^{T}\pmb x)+\lambda_2(\pmb\beta_{(2)}^{T}\pmb x)+\cdots +\lambda_p(\pmb\beta_{(p)}^{T}\pmb x): \lambda_i\in \Lambda^{\alpha,\gamma}(L), \pmb\beta_{(i)}\in \mathcal{B}, 1\le i\le p\}$, for the pairs $(\tilde{\lambda}_i,\pmb{\tilde{\beta}}_{(i)})$, in which $\tilde{\lambda}_i\in \mathcal{S}$ and $\pmb{\tilde{\beta}}_{(i)}\in \mathcal{T}$ with $1\le i\le p$, they form an $\varepsilon$-net for the class, as shown below.
    
    For $1\le i\le p$, for any $\lambda_i\in \Lambda^{\alpha,\gamma}(L)$, there exists $\tilde{\lambda}_i\in \mathcal{S}$ so that 
    \begin{equation*}
    	\begin{aligned}
    		\int_{[0,1]^d}\lvert \lambda_i(\pmb\beta_{(i)}^{T}\pmb x)-\tilde \lambda_i(\pmb\beta_{(i)}^{T}\pmb x)\rvert^2q_x(\pmb x)d\pmb x
    		&\le \frac{\varepsilon^2}{4p^2},
    	\end{aligned}
    \end{equation*}
    where $q_x(\pmb x)$ is the joint density of $\pmb x$.
    Since the $s$-th derivative of $\tilde \lambda_i$ satisfy $\gamma$-H\"{o}lder condition, we can easily verify that
    \begin{equation*}
    	\lvert \tilde \lambda_i(\pmb\beta_{(i)}^{T}\pmb x)-\tilde \lambda_i(\tilde{\pmb\beta}_{(i)}^{T}\pmb x)\lvert \le L_*\lvert \pmb\beta_{(i)}^{T}\pmb x-\tilde{\pmb\beta}_{(i)}^{T}\pmb x\rvert^\gamma,
    \end{equation*}
    where $L_*$ is a constant depending on $L$, $s$ and $d$. For $1\le i\le p$, since $\pmb\beta_{(i)}\in \mathcal{B}$ and $\pmb{\tilde{\beta}}_{(i)}\in \mathcal{T}$, we obtain
    \begin{equation*}
    	\begin{aligned}
    		\int_{[0,1]^d}\lvert\tilde \lambda_i(\pmb\beta_{(i)}^{T}\pmb x)-\tilde{\lambda}_i(\tilde{\pmb\beta}_{(i)}^{T}\pmb x)\rvert^2q_x(\pmb x)d\pmb x&\le L_*^2\int_{[0,1]^d}\lvert\pmb\beta_{(i)}^{T}\pmb x-\tilde{\pmb\beta}_{(i)}^{T}\pmb x\rvert^{2\gamma}q_x(\pmb x)\lambda(d\pmb x) \\
    		&\le L_*^2(\int_{[0,1]^d}\lvert\pmb\beta_{(i)}^{T}\pmb x-\tilde{\pmb\beta}_{(i)}^{T}\pmb x\rvert^{2}q_x(\pmb x)\lambda(d\pmb x))^\gamma\\
    		&\le \frac{\varepsilon^2}{4p^2}.
    	\end{aligned}
    \end{equation*}

    Hence, for $1\le i\le p$, we have
    \begin{equation*}
    \begin{aligned}
    	   &\int_{[0,1]^d}\lvert \lambda_i(\pmb\beta_{(i)}^{T}\pmb x)-\tilde{\lambda}_i(\tilde{\pmb\beta}_{(i)}^{T}\pmb x)\rvert^2q_x(\pmb x)d\pmb x\\
    	   &\le 2\int_{[0,1]^d}\lvert \lambda_i(\pmb\beta_{(i)}^{T}\pmb x)-\tilde \lambda_i(\pmb\beta_{(i)}^{T}\pmb x)\rvert^2q_x(\pmb x)\lambda(d\pmb x) +2\int_{[0,1]^d}\lvert\tilde \lambda_i(\pmb\beta_{(i)}^{T}\pmb x)-\tilde{\lambda}_i(\tilde{\pmb\beta}_{(i)}^{T}\pmb x)\rvert^2q_x(\pmb x)d\pmb x \\
    	   &\le \frac{\varepsilon^2}{p^2}.
    \end{aligned}
    \end{equation*}

    Therefore, we get
    \begin{equation*}
    \begin{aligned}
    	  &\int_{[0,1]^d}\lvert \lambda_1(\pmb\beta_{(1)}^{T}\pmb x)+\lambda_2(\pmb\beta_{(2)}^{T}\pmb x)+\cdots+ \lambda_p(\pmb\beta_{(p)}^{T}\pmb x)-\tilde{\lambda}_1(\tilde{\pmb\beta}_{(1)}^{T}\pmb x)-\tilde \lambda_2(\tilde{\pmb\beta}_{(2)}^{T}\pmb x)-\cdots -\tilde \lambda_p(\tilde{\pmb\beta}_{(p)}^{T}\pmb x)\lvert^2q_x(\pmb x)d\pmb x\\
    	  &\le p\int_{[0,1]^d}\lvert \lambda_1(\pmb\beta_{(1)}^{T}\pmb x)-\tilde{\lambda}_1(\tilde{\pmb\beta}_{(1)}^{T}\pmb x)\rvert^2q_x(\pmb x)d\pmb x+p\int_{[0,1]^d}\lvert \lambda_2(\pmb\beta_{(2)}^{T}\pmb x)-\tilde{\lambda}_2(\tilde{\pmb\beta}_{(2)}^{T}\pmb x)\rvert^2q_x(\pmb x)d\pmb x\\
    	  &~~~~+\cdots+p\int_{[0,1]^d}\lvert \lambda_p(\pmb\beta_{(p)}^{T}\pmb x)-\tilde{\lambda}_p(\tilde{\pmb\beta}_{(p)}^{T}\pmb x)\rvert^2q_x(\pmb x)d\pmb x\\
    	  &\le \varepsilon^2.
    \end{aligned}
    \end{equation*}

    Based on the known metric entropy order of the H\"{o}lder  class, we know the size of $\mathcal{S}$ is of order $(\frac{\varepsilon}{2p})^{-\frac{1}{\alpha+\gamma}}\asymp \varepsilon^{-\frac{1}{\alpha+\gamma}}$ and the size of $\mathcal{T}$ is of order $\log (\frac{2pL_*}{\varepsilon})^{\frac{1}{\gamma}}\asymp\log\frac{1}{\varepsilon}$. It follows
    that  the product of $\mathcal{S}$ and $\mathcal{T}$ is of size of order $\varepsilon^{-\frac{1}{\alpha+\gamma}}$. Therefore the metric entropy of $\cal{Q}$ is of order $\varepsilon^{-\frac{1}{\alpha+\gamma}}$.

    By \Cref{th1}, we have the minimax rate of convergence under the squared $L_2$ loss for estimating the regression function is $n^{-\frac{2(\alpha+\gamma)}{2(\alpha+\gamma)+1}}$ for all $\alpha\ge 0$ and $0<\gamma\le 1$.
\end{proof}

\subsection{A useful proposition on Hellinger differentiability and its extension}

The following result is helpful for showing Hellinger differentiability.

\begin{proposition}\label{prop1}
    (\cite{Pollard2}) Let $\mathcal L=\{l_\beta(x):\lvert\beta\rvert<J\}$ be a subset of nonnegative functions that are integrable with respect to Lebesgue measure $\lambda$ for some $J>0$.
    Suppose that \\
	(i) the map $(x,\beta)\mapsto l_\beta(x)$ is product measurable;\\
	(ii) the function $\beta\mapsto l_\beta(x)$ is absolutely continuous on $[-J,J]$ for almost all x;\\
	(iii) the function $\beta\mapsto l_\beta(x)$ has almost sure derivative $\dot{l}_\beta(x)$;\\
	(iv) for each $\beta$ the function $\dot{\xi}_\beta(x):=\frac{1}{2}\dot{l}_\beta(x)I(l_\beta(x)>0)/\sqrt{l_\beta(x)}$ is square integrable and $\mathbb{E} \dot{\xi}_\beta^2 \mapsto \mathbb{E} \dot{\xi}_0^2$ as $\beta \mapsto 0$.\\
	Then $\mathcal{L}$ has Hellinger derivative $\dot{\xi}(x)$ with respect to $\beta$ at 0.
\end{proposition}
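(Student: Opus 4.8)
The plan is to follow the classical H\'ajek--Le Cam route for checking differentiability in quadratic mean: exhibit the candidate Hellinger derivative explicitly, use the fundamental theorem of calculus along the one-parameter path to reduce the claim to an $L_2$-continuity statement for that derivative, and then extract the $L_2$-continuity from the norm convergence in hypothesis (iv). Write $\xi_\beta(x)=\sqrt{l_\beta(x)}$; the candidate derivative is the square-integrable function $\dot\xi:=\dot\xi_0$ from (iv). The goal is to show $\lVert \xi_\beta-\xi_0-\beta\,\dot\xi_0\rVert_2=o(|\beta|)$ as $\beta\to 0$, where $\lVert\cdot\rVert_2$ is the $L_2(\lambda)$ norm.

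First I would establish a fundamental-theorem-of-calculus identity: for almost every $x$, the map $\beta\mapsto\xi_\beta(x)$ is absolutely continuous on a neighbourhood of $0$ with a.e. derivative $\dot\xi_\beta(x)$, so that $\xi_\beta(x)-\xi_0(x)=\int_0^\beta\dot\xi_s(x)\,ds$ for $\beta$ near $0$. On $\{l_0(x)>0\}$ this is straightforward: by the absolute continuity (ii), $l_\beta(x)$ stays positive and $l_\beta(x)\to l_0(x)$ as $\beta\to0$, and since $t\mapsto\sqrt t$ is $C^1$ away from the origin, the chain rule together with (iii) yields absolute continuity of $\sqrt{l_\beta(x)}$ near $0$ with derivative $\dot\xi_\beta(x)$; product measurability (i) makes $(\beta,x)\mapsto\dot\xi_\beta(x)$ jointly measurable so that Fubini can be used. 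On $\{l_0(x)=0\}$ one has $\xi_0(x)=\dot\xi_0(x)=0$ and $\beta=0$ is a minimum of the nonnegative absolutely continuous map $\beta\mapsto l_\beta(x)$; here the identity needs a more delicate argument, and this is the place where hypothesis (iv) does essential work.

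Granting the identity, set $r_\beta(x):=\xi_\beta(x)-\xi_0(x)-\beta\,\dot\xi_0(x)=\int_0^\beta\big(\dot\xi_s(x)-\dot\xi_0(x)\big)\,ds$. By the generalized (integral) Minkowski inequality, $\lVert r_\beta\rVert_2\le\big|\int_0^\beta\lVert\dot\xi_s-\dot\xi_0\rVert_2\,ds\big|\le|\beta|\sup_{0<|s|\le|\beta|}\lVert\dot\xi_s-\dot\xi_0\rVert_2$, so the proposition reduces to showing that $s\mapsto\dot\xi_s$ is continuous at $0$ in $L_2(\lambda)$. To get this, expand $\lVert\dot\xi_s-\dot\xi_0\rVert_2^2=\lVert\dot\xi_s\rVert_2^2-2\langle\dot\xi_s,\dot\xi_0\rangle+\lVert\dot\xi_0\rVert_2^2$. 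Hypothesis (iv) is exactly $\lVert\dot\xi_s\rVert_2^2=\mathbb E\dot\xi_s^2\to\mathbb E\dot\xi_0^2=\lVert\dot\xi_0\rVert_2^2$, so it remains to prove $\langle\dot\xi_s,\dot\xi_0\rangle\to\lVert\dot\xi_0\rVert_2^2$. For that I would show $\dot\xi_s(x)\to\dot\xi_0(x)$ at almost every $x$ --- on $\{l_0(x)>0\}$ from $l_s(x)\to l_0(x)$ and $\dot l_s(x)\to\dot l_0(x)$ via (ii)--(iii), and on $\{l_0(x)=0\}$ as the degenerate statement $\dot\xi_s(x)\to 0$, again using (iv) to rule out mass of $\dot\xi_s^2$ escaping onto that set --- and then invoke the Hilbert-space fact that a.e.\ convergence together with convergence of the $L_2$ norms forces convergence in $L_2$. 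Chaining the estimates gives $\lVert r_\beta\rVert_2=o(|\beta|)$, i.e.\ Hellinger differentiability at $0$ with derivative $\dot\xi_0$.

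The hard part will be the set $\{l_0=0\}$. There $\beta\mapsto\sqrt{l_\beta(x)}$ need not be absolutely continuous at all --- one can construct a Lipschitz $g\ge0$ for which $\sqrt g$ has unbounded variation --- so neither the fundamental-theorem identity nor the a.e.\ convergence of $\dot\xi_s$ can be extracted from (i)--(iii) alone; it is precisely hypothesis (iv), read as a ``no escape of Fisher information to the zero set'' condition, that controls both. Off $\{l_0=0\}$ the argument is a routine $C^1$ chain-rule-plus-dominated-convergence computation, so essentially all the subtlety is concentrated in handling that set and in the Hilbert-space upgrade from norm convergence to $L_2$ convergence in the penultimate step.
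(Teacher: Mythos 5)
The paper does not actually prove this proposition; it is quoted from Pollard, so the comparison below is with the standard argument (Pollard's, essentially the same as in Bickel--Klaassen--Ritov--Wellner and van der Vaart). Your plan has a genuine gap, in fact two, and both are load-bearing. First, your reduction rests on the exact identity $\xi_\beta(x)-\xi_0(x)=\int_0^\beta\dot\xi_s(x)\,ds$ holding for a.e.\ $x$ at each fixed small $\beta$. Conditions (i)--(iii) do not give this: even on $\{l_0>0\}$ the path $s\mapsto l_s(x)$ may hit zero somewhere in $[0,\beta]$ for a non-negligible set of $x$ once $\beta$ is fixed, and on $\{l_0=0\}$ the map $\beta\mapsto\sqrt{l_\beta(x)}$ need not be absolutely continuous at all (your own $t^2\sin^2(1/t)$-type example). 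You flag this and promise that (iv) ``does essential work'' there, but you give no mechanism, and indeed (iv) is an $x$-integrated statement that cannot restore a pointwise fundamental-theorem identity. The standard proof never proves that identity: it replaces it by the one-sided bound $\lvert\xi_\beta(x)-\xi_0(x)\rvert\le\bigl|\int_0^\beta\lvert\dot\xi_s(x)\rvert\,ds\bigr|$, valid for every nonnegative absolutely continuous path (apply the chain rule to $\max(l_s(x),\epsilon)$ and let $\epsilon\downarrow0$), then uses Cauchy--Schwarz in $s$, Fubini via (i), and (iv) to get $\limsup_{\beta\to0}\lVert(\xi_\beta-\xi_0)/\beta\rVert_2\le\lVert\dot\xi_0\rVert_2$; the zero set is handled at the end by a Fatou argument showing $\int_{\{l_0=0\}}((\xi_\beta-\xi_0)/\beta)^2\,d\lambda\to0$, not by a pointwise identity there.

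Second, and more seriously, your reduction requires $L_2$-continuity of the score, $\lVert\dot\xi_s-\dot\xi_0\rVert_2\to0$, and you propose to obtain it from ``$\dot l_s(x)\to\dot l_0(x)$ via (ii)--(iii)'' plus the norm convergence in (iv). But (ii)--(iii) give no continuity of $\dot l_s(x)$ in $s$: the a.e.\ derivative of an absolutely continuous function is merely a locally integrable function of the parameter and can fail to converge as $s\to0$ at every $x$, so the a.e.\ convergence $\dot\xi_s\to\dot\xi_0$ you need as input to the Radon--Riesz upgrade is unavailable. Hypothesis (iv) only says the norms converge, which by itself gives nothing. In the standard proof the ``a.e.\ convergence plus convergence of norms implies $L_2$ convergence'' step is applied to the difference quotients $\Delta_\beta=(\xi_\beta-\xi_0)/\beta$ (whose a.e.\ convergence on $\{l_0>0\}$ follows from differentiating $\sqrt{\,\cdot\,}$ at the positive value $l_0(x)$, and whose norm bound comes from the integral inequality above), not to the scores $\dot\xi_s$; your version asks for a strictly stronger continuity property of the family than the hypotheses supply. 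So the architecture needs to be reorganized around the difference quotients rather than around an FTC identity plus $L_2$-continuity of $s\mapsto\dot\xi_s$.
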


We need the following lemma, which is an extension of \Cref{prop1}, to prove Corollaries 1 and 2.
\begin{lemma}\label{lm2}
	Let $g(y)$ denote the function from the location-scale family \eqref{loc_sca} with location parameter $\mu=0$ and scale parameter $\sigma=1$. Suppose that\\
	(i) the function $g(y)$ is absolutely continuous for almost all $y$;\\
	(ii) the function $g(y)$ has almost sure derivative $\dot g(y)$;\\
	(iii)  the function $\dot\xi_{(\mu,1)_\mu}(y):=-\frac{1}{2}\dot g(y-\mu)I(g(y-\mu)>0)/\sqrt{g(y-\mu)}$ is square integrable and $\mathbb{E}\dot \xi_{(\mu,1)_\mu}^2(y)\to \mathbb{E}\dot\xi_{(0,1)_\mu}^2(y)$ as $\mu\to 0$;\\
	(iv) $y\dot\xi_{(0,1)_\mu}(y)$ is square integrable;\\
	(v) $\lVert \dot\xi_{(0,\sigma)_\mu}(y)-\dot\xi_{(0,1)_\mu}(y)\lVert=O(\lvert \sigma-1\lvert)$ for $\sigma$ near 1.\\
    Then $\mathcal{F}$ has Hellinger derivative $(\dot\xi_{(0,1)_\mu}(y),\dot\xi_{(0,1)_\sigma}(y))^T$ with respect to $(\mu,\sigma)$ at $(0,1)$, where $\dot\xi_{(0,1)_\sigma}(y)$ is defined as $-\frac{1}{2}\sqrt{g(y)}-\frac{y}{2}\dot g(y)I(g(y)>0)/\sqrt{g(y)}$.
\end{lemma}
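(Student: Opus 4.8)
The plan is to expand $\xi_{\eta,\sigma}:=\sqrt{f_{\eta,\sigma}}$ around $(0,1)$ via the splitting $\xi_{\eta,\sigma}-\xi_{0,1}=(\xi_{\eta,\sigma}-\xi_{0,\sigma})+(\xi_{0,\sigma}-\xi_{0,1})$, to handle the location piece and the scale piece by separate applications of \Cref{prop1}, and then to reconcile the two. For the \emph{location piece} I would apply \Cref{prop1} with $l_\beta(y)=g(y-\beta)$: hypotheses (i)--(ii) of the lemma make $(y,\beta)\mapsto l_\beta(y)$ product measurable and $\beta\mapsto l_\beta(y)$ absolutely continuous with a.e.\ derivative $-\dot g(y-\beta)$, so the function $\tfrac12\dot l_\beta I(l_\beta>0)/\sqrt{l_\beta}$ is exactly $\dot\xi_{(\beta,1)_\mu}$ and hypothesis (iii) is precisely requirement (iv) of \Cref{prop1}. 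This yields Hellinger differentiability of $\eta\mapsto\xi_{\eta,1}$ at $0$ with derivative $\dot\xi_{(0,1)_\mu}$; I would record the remainder as $\|\xi_{\delta,1}-\xi_{0,1}-\delta\,\dot\xi_{(0,1)_\mu}\|=|\delta|\,\varepsilon(\delta)$ with $\varepsilon(\delta)\to0$.

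For the \emph{scale piece} I would apply \Cref{prop1} with $l_\beta(y)=(1+\beta)^{-1}g\big(y/(1+\beta)\big)$ and $\beta=\sigma-1$. Again (i)--(ii) supply requirements (i)--(iii) of \Cref{prop1}, and the derivative at $\beta=0$ works out to $\tfrac12\big(-g(y)-y\dot g(y)\big)I(g(y)>0)/\sqrt{g(y)}=\dot\xi_{(0,1)_\sigma}(y)$, which is square integrable because $y\,\dot\xi_{(0,1)_\mu}(y)=-\tfrac y2\dot g(y)I(g(y)>0)/\sqrt{g(y)}\in L_2$ by hypothesis (iv) while $\sqrt g\in L_2$. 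A change of variables $u=y/(1+\beta)$ shows the quantity ``$\mathbb E\dot\xi_\beta^2$'' in requirement (iv) of \Cref{prop1} equals $(1+\beta)^{-2}\|\dot\xi_{(0,1)_\sigma}\|^2$, hence is continuous at $\beta=0$. So $\sigma\mapsto\xi_{0,\sigma}$ is Hellinger differentiable at $\sigma=1$ with derivative $\dot\xi_{(0,1)_\sigma}$, i.e.\ $\xi_{0,\sigma}-\xi_{0,1}=(\sigma-1)\dot\xi_{(0,1)_\sigma}+o(|\sigma-1|)$.

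To \emph{reconcile}, I would first upgrade the location expansion to hold at a general scale $\sigma$ near $1$ with a remainder uniform in $\sigma$. The key observation is that the dilation $T_a\phi(y):=a^{1/2}\phi(ay)$ is unitary on $L_2$ and $\xi_{\eta,\sigma}=T_{1/\sigma}\,\xi_{\eta/\sigma,1}$, so $\eta\mapsto\xi_{\eta,\sigma}$ is a unitary image of $\eta\mapsto\xi_{\eta/\sigma,1}$; transporting the location expansion through $T_{1/\sigma}$ gives $\xi_{\eta,\sigma}-\xi_{0,\sigma}=\eta\,\dot\xi_{(0,\sigma)_\mu}+A_{\eta,\sigma}$ with $\|A_{\eta,\sigma}\|=|\eta/\sigma|\,\varepsilon(\eta/\sigma)$, which is $o(|\eta|)$ uniformly for $\sigma$ in a fixed neighbourhood of $1$ since there $|\eta/\sigma|\asymp|\eta|$. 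Adding the scale piece,
\[
\xi_{\eta,\sigma}-\xi_{0,1}=\eta\,\dot\xi_{(0,1)_\mu}+(\sigma-1)\dot\xi_{(0,1)_\sigma}+\underbrace{\eta\big(\dot\xi_{(0,\sigma)_\mu}-\dot\xi_{(0,1)_\mu}\big)+A_{\eta,\sigma}+o(|\sigma-1|)}_{=:\,r_{\eta,\sigma}} .
\]
By hypothesis (v), $\|\dot\xi_{(0,\sigma)_\mu}-\dot\xi_{(0,1)_\mu}\|=O(|\sigma-1|)$, so $\|\eta(\dot\xi_{(0,\sigma)_\mu}-\dot\xi_{(0,1)_\mu})\|=O(|\eta|\,|\sigma-1|)=O(|(\eta,\sigma-1)|^2)$; combined with the two $o$-terms this gives $\|r_{\eta,\sigma}\|=o(|(\eta,\sigma-1)|)$, which is exactly \eqref{2}. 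Hence $\mathcal F$ is Hellinger differentiable at $(0,1)$ with derivative $(\dot\xi_{(0,1)_\mu},\dot\xi_{(0,1)_\sigma})^{T}$.

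The main obstacle will be the reconciliation step: a single application of \Cref{prop1} yields the location expansion only at $\sigma=1$, whereas the splitting needs it with a remainder uniform in $\sigma$ near $1$, and it also leaves the ``off-diagonal'' term $\eta(\dot\xi_{(0,\sigma)_\mu}-\dot\xi_{(0,1)_\mu})$ to be controlled. Scale invariance of the Hellinger distance (the unitary $T_a$) takes care of the uniformity almost for free, and hypothesis (v), with its quantitative $O(|\sigma-1|)$ rate, is precisely what lets the off-diagonal term be absorbed into the $o(|(\eta,\sigma-1)|)$ remainder. The remaining ingredients --- the two one-parameter applications of \Cref{prop1} and the change-of-variables bookkeeping --- are routine.
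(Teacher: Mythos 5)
Your proposal is correct and follows essentially the same route as the paper's own proof: two one-parameter applications of \Cref{prop1} (location at a fixed scale, scale at zero location), square integrability of $\dot\xi_{(0,1)_\sigma}$ deduced from hypothesis (iv) together with Cauchy--Schwarz, and hypothesis (v) used to absorb the cross term $\eta\big(\dot\xi_{(0,\sigma)_\mu}-\dot\xi_{(0,1)_\mu}\big)=O(\lvert\eta\rvert\,\lvert\sigma-1\rvert)$ into the $o\big(\sqrt{\eta^2+(\sigma-1)^2}\big)$ remainder. Your unitary-dilation argument making the location remainder $o(\lvert\eta\rvert)$ uniformly in $\sigma$ near $1$, and the change-of-variables identity $\mathbb{E}\dot\xi_\beta^2=(1+\beta)^{-2}\lVert\dot\xi_{(0,1)_\sigma}\rVert^2$ for the scale family, are details the paper leaves implicit (it simply asserts $\lVert r_{0,\sigma}\rVert=o(\lvert\mu\rvert)$ and the corresponding continuity), so your write-up is, if anything, slightly more careful at exactly those points.
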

\begin{proof}
For the regular differentiability, we have
\begin{equation*}
	\begin{aligned}
		\frac{\partial f_{\mu,\sigma}}{\partial \mu}&=-\frac{1}{\sigma^2}\dot g(\frac{y-\mu}{\sigma}),\\
		\frac{\partial f_{\mu,\sigma}}{\partial \sigma}&=-\frac{1}{\sigma^2}g(\frac{y-\mu}{\sigma})-\frac{y-\mu}{\sigma^3}\dot g(\frac{y-\mu}{\sigma}).
	\end{aligned}
\end{equation*}

If the regular differentiation formulas were valid, the Hellinger derivative of $f_{\mu,\sigma}(y)$ with respect to $\mu$ would be 
\begin{equation*}
	\dot\xi_{(\mu,\sigma)_{\mu}}=-\frac{1}{2\sigma^{\frac{3}{2}}}\frac{\dot g(\frac{y-\mu}{\sigma})}{\sqrt{g(\frac{y-\mu}{\sigma})}}I(g(\frac{y-\mu}{\sigma})>0),
\end{equation*}
and the Hellinger derivative of $f_{\mu,\sigma}(y)$ with respect to $\sigma$ would be
\begin{equation*}
	\dot\xi_{(\mu,\sigma)_{\sigma}}=-\frac{1}{2\sigma^{\frac{3}{2}}}\sqrt{g(\frac{y-\mu}{\sigma})}-\frac{y-\mu}{2\sigma^{\frac{5}{2}}}\frac{\dot g(\frac{y-\mu}{\sigma})}{\sqrt{g(\frac{y-\mu}{\sigma})}}I(g(\frac{y-\mu}{\sigma})>0).
\end{equation*}

Hence, if $f_{\mu,\sigma}(y)$ is Hellinger differentiable with respect to $(\mu,\sigma)$ at $(0,1)$, we have \big(recall $\xi_{\mu,\sigma}=\frac{1}{\sqrt{\sigma}}\sqrt{g(\frac{y-\mu}{\sigma}})$\big)
\begin{equation*}
\begin{aligned}
	\xi_{\mu,\sigma}&=\xi_{0,1}+(\mu,\sigma-1)(\dot{\xi}_{(0,1)_\mu},\dot{\xi}_{(0,1)_\sigma})^T+r_{\mu,\sigma}.
\end{aligned}
\end{equation*}

Next, we verify that the above equations indeed are the Hellinger derivatives. We first verify that the Hellinger derivatives are square integrable. Given $(i)$-$(iii)$, by \Cref{prop1}, we have $f_{\mu,\sigma}(y)$ is Hellinger differentiable with respect to $\mu$ at 0, and we know that
\begin{equation*}
	\begin{aligned}
		\int \dot{\xi}^2_{(0,1)_\mu}dy<\infty.
	\end{aligned}
\end{equation*}
Also, we have
\begin{equation}\label{sigma_integrable}
	\begin{aligned}
	    &\int \dot{\xi}^2_{(0,1)_\sigma}dy\\
		&=\int (-\frac{1}{2}\sqrt{g(y)}+y\dot{\xi}_{(0,1)_\mu})^2dy\\
		&=\int [\frac{1}{4}g(y)+y^2\dot{\xi}^2_{(0,1)_\mu}-y\sqrt{g(y)}\dot{\xi}_{(0,1)_\mu}]dy.
	\end{aligned}
\end{equation}

By (iv) and Cauchy-Schwarz inequality, the above integral is finite. Hence, we have obtained that $\dot{\xi}_{(0,1)_\mu}$ and $\dot{\xi}_{(0,1)_\sigma}$ are square integrable.

Next, we verify that the residual term satisfies $r_{\mu,\sigma}=o(\sqrt{\mu^2+(\sigma-1)^2})$ near $(0,1)$. We have
\begin{equation*}
	\begin{aligned}
		\lVert r_{\mu,\sigma}(y)\rVert&=\lVert\xi_{\mu,\sigma}(y)-\xi_{0,1}(y)-\mu\dot\xi_{(0,1)_\mu}(y)-(\sigma-1)\dot\xi_{(0,1)_\sigma}(y)\rVert\\
		&=\lVert\xi_{\mu,1}-\xi_{0,1}-\mu\dot\xi_{(0,1)_\mu}+\xi_{0,\sigma}-\xi_{0,1}-(\sigma-1)\dot\xi_{(0,1)_\sigma}+\xi_{\mu,\sigma}-\xi_{0,\sigma}-\xi_{\mu,1}+\xi_{0,1}\rVert\\
		&\le \lVert\xi_{\mu,1}-\xi_{0,1}-\mu\dot\xi_{(0,1)_\mu}\rVert+\lVert\xi_{0,\sigma}-\xi_{0,1}-(\sigma-1)\dot\xi_{(0,1)_\sigma}\rVert+\lVert\xi_{\mu,\sigma}-\xi_{0,\sigma}-\xi_{\mu,1}+\xi_{0,1}\rVert,
	\end{aligned}
\end{equation*}
where the upper bound follows from Minkowski inequality. Since $f_{\mu,\sigma}(y)$ is Hellinger differentiable with respect to $\mu$ at 0, we have the first term in the bound to be $o(\lvert \mu\rvert)$. Given $(i)$ and $(ii)$, the conditions $(ii)$ and $(iii)$ in \Cref{prop1} are satisfied for $f_{\mu,\sigma}(y)$ with respect to $\sigma$. Given $(iii)$ and $(iv)$, by \cref{sigma_integrable}, we have $\dot{\xi}_{(0,1)_\sigma}$ is square integrable and $\mathbb{E}\dot \xi_{(0,\sigma)_\sigma}^2(y)\to \mathbb{E}\dot\xi_{(0,1)_\sigma}^2(y)$ as $\sigma\to 1$, which means $(iv)$ in \Cref{prop1} holds (condition $(i)$ in \Cref{prop1} is trivial). Hence, we obtain that $f_{\mu,\sigma}(y)$ is Hellinger differentiable with respect to $\sigma$ at 1. Then we have the second term in the bound to be $o(\lvert \sigma-1\rvert)$. Also, through $(v)$, we have
\begin{equation*}
	\begin{aligned}
		\lVert\xi_{\mu,\sigma}-\xi_{0,\sigma}-\xi_{\mu,1}+\xi_{0,1}\rVert&=\lVert\mu\dot\xi_{(0,\sigma)_\mu}+r_{0,\sigma}-\mu\dot\xi_{(0,1)_\mu}-r_{0,1}\rVert\\
		&\le \lVert\mu\dot\xi_{(0,\sigma)_\mu}-\mu\dot\xi_{(0,1)_\mu}\rVert + \lVert r_{0,\sigma}\rVert+ \lVert r_{0,1}\rVert\\
		&= \lVert\mu\dot\xi_{(0,\sigma)_\mu}-\mu\dot\xi_{(0,1)_\mu}\rVert+o(\lvert \mu\rvert)\\
		&=O(\lvert \mu(\sigma-1)\rvert)+o(\lvert \mu\rvert)\\
		&=o(\sqrt{\mu^2+(\sigma-1)^2}).
	\end{aligned}
\end{equation*}

Therefore, we obtain
\begin{equation*}
	\lVert r_{\mu,\sigma}(y)\rVert=o(\sqrt{\mu^2+(\sigma-1)^2}),
\end{equation*}
which completes the proof.
\end{proof}

\bibliographystyle{chicago}

\bibliography{bibliography}

\begin{thebibliography}{}

\bibitem[\protect\citeauthoryear{Adam and Gijbels}{Adam and
  Gijbels}{2021}]{Adam}
Adam, C. and I.~Gijbels (2021).
\newblock Local polynomial expectile regression.
\newblock {\em Annals of the Institute of Statistical Mathematics,
  \normalfont{first published online 05 May 2021}\/}.
\newblock \url{https://doi.org/10.1007/s10463-021-00799-y}.

\bibitem[\protect\citeauthoryear{Aigner, Amemiya, and Poirier}{Aigner
  et~al.}{1976}]{Aigner}
Aigner, D., T.~Amemiya, and D.~Poirier (1976).
\newblock On the estimation of production frontiers: Maximum likelihood
  estimation of the parameters of a discontinuous density function.
\newblock {\em International Economic Review\/}~{\em 17\/}(2), 377--396.

\bibitem[\protect\citeauthoryear{Azzalini}{Azzalini}{1985}]{Azzalini}
Azzalini, A. (1985).
\newblock A class of distributions which includes the normal ones.
\newblock {\em Scandinavian Journal of Statistics\/}~{\em 12\/}(2), 171--178.

\bibitem[\protect\citeauthoryear{Bayer}{Bayer}{2018}]{Bayer}
Bayer, S. (2018).
\newblock Combining value-at-risk forecasts using penalized quantile
  regressions.
\newblock {\em Econometrics and Statistics\/}~{\em 8}, 56--77.

\bibitem[\protect\citeauthoryear{Birg\'{e}}{Birg\'{e}}{1983}]{Birge}
Birg\'{e}, L. (1983).
\newblock Approximation dans les espaces metriques et theorie de
  l’estimation.
\newblock {\em Zeitschrift f\"{u}r Wahrscheinlichkeitstheorie und Verwandte
  Gebiete\/}~{\em 65}, 181--237.

\bibitem[\protect\citeauthoryear{Chen, Hall, and M\"{u}ller}{Chen
  et~al.}{2011}]{ChenD}
Chen, D., P.~Hall, and H.~M\"{u}ller (2011).
\newblock Single and multiple index functional regression models with
  nonparametric link.
\newblock {\em The Annals of Statistics\/}~{\em 39\/}(3), 1720--1747.

\bibitem[\protect\citeauthoryear{Chen, Gerlach, and Lu}{Chen
  et~al.}{2012}]{QianChen}
Chen, Q., R.~Gerlach, and Z.~Lu (2012).
\newblock Bayesian value-at-risk and expected shortfall forecasting via the
  asymmetric laplace distribution.
\newblock {\em Computational Statistics \& Data Analysis\/}~{\em 56\/}(11),
  3498--3516.

\bibitem[\protect\citeauthoryear{Chen and Christensen}{Chen and
  Christensen}{2015}]{ChenXiaohong}
Chen, X. and T.~M. Christensen (2015).
\newblock Optimal uniform convergence rates and asymp- totic normality for
  series estimators under weak dependence and weak conditions.
\newblock {\em Journal of Econometrics\/}~{\em 188\/}(2), 447--465.

\bibitem[\protect\citeauthoryear{Chen and Pouzo}{Chen and
  Pouzo}{2012}]{XiaohongChen}
Chen, X. and D.~Pouzo (2012).
\newblock Estimation of nonparametric conditional moment models with possibly
  nonsmooth generalized residuals.
\newblock {\em Econometrica\/}~{\em 80\/}(1), 277--321.

\bibitem[\protect\citeauthoryear{Chen and Reiss}{Chen and
  Reiss}{2011}]{XiaohongChen2}
Chen, X. and M.~Reiss (2011).
\newblock On rate optimality for ill-posed inverse problems in econometrics.
\newblock {\em Econometric Theory\/}~{\em 27}, 497--521.

\bibitem[\protect\citeauthoryear{Cheng, Wang, and Yang}{Cheng
  et~al.}{2015}]{GangCheng}
Cheng, G., S.~Wang, and Y.~Yang (2015).
\newblock Forecast combination under heavy-tailed errors.
\newblock {\em Econometrics\/}~{\em 3}, 797--824.

\bibitem[\protect\citeauthoryear{Cheng and Yang}{Cheng and
  Yang}{2015}]{GangCheng2}
Cheng, G. and Y.~Yang (2015).
\newblock Forecast combination with outlier protection.
\newblock {\em International Journal of Forecasting\/}~{\em 31\/}(2), 223--237.

\bibitem[\protect\citeauthoryear{Chernozhukov}{Chernozhukov}{2005}]{Chernozhukov}
Chernozhukov, V. (2005).
\newblock Extremal quantile regression.
\newblock {\em The Annals of Statistics\/}~{\em 33\/}(2), 806--839.

\bibitem[\protect\citeauthoryear{Chyzak and Nielsen}{Chyzak and
  Nielsen}{2019}]{Chyzak}
Chyzak, F. and F.~Nielsen (2019).
\newblock A closed-form formula for the {K}ullback-{L}eibler divergence between
  {C}auchy distributions.
\newblock arXiv:1905.10965v2.

\bibitem[\protect\citeauthoryear{Davidian and Carroll}{Davidian and
  Carroll}{1987}]{Davidian}
Davidian, M. and R.~J. Carroll (1987).
\newblock Variance function estimation.
\newblock {\em Journal of the American Statistical Association\/}~{\em 82},
  1079--1091.

\bibitem[\protect\citeauthoryear{DeVore and Lorentz}{DeVore and
  Lorentz}{1993}]{DeVore}
DeVore, R.~A. and G.~G. Lorentz (1993).
\newblock {\em Constructive Approximation}.
\newblock Springer.

\bibitem[\protect\citeauthoryear{Efron}{Efron}{1991}]{Efron}
Efron, B. (1991).
\newblock Regression percentiles using asymmetric squared error loss.
\newblock {\em Statistica Sinica\/}~{\em 1}, 93--125.

\bibitem[\protect\citeauthoryear{Farooq and Steinwart}{Farooq and
  Steinwart}{2017}]{Farooq}
Farooq, M. and I.~Steinwart (2017).
\newblock An svm-like approach for expectile regression.
\newblock {\em Computational Statistics \& Data Analysis\/}~{\em 109},
  159--181.

\bibitem[\protect\citeauthoryear{Farooq and Steinwart}{Farooq and
  Steinwart}{2019}]{Muhammad}
Farooq, M. and I.~Steinwart (2019).
\newblock Learning rates for kernel-based expectile regression.
\newblock {\em Machine Learning\/}~{\em 108}, 203--227.

\bibitem[\protect\citeauthoryear{Frienman}{Frienman}{2001}]{Friedman}
Frienman, J.~H. (2001).
\newblock Greedy function approximation: A gradient boosting machine.
\newblock {\em The Annals of Statistics\/}~{\em 29\/}(5), 1189--1232.

\bibitem[\protect\citeauthoryear{Gaglianone, Lima, Linton, and
  Smith}{Gaglianone et~al.}{2011}]{Gaglianone}
Gaglianone, W.~P., L.~R. Lima, O.~Linton, and D.~R. Smith (2011).
\newblock Evaluating value-at-risk models via quantile regression.
\newblock {\em Journal of Business \& Economic Statistics\/}~{\em 29\/}(1),
  150--160.

\bibitem[\protect\citeauthoryear{Gijbels, Karim, and Verhasselt}{Gijbels
  et~al.}{2019}]{Gijbels2019}
Gijbels, I., R.~Karim, and A.~Verhasselt (2019).
\newblock On quantile-based asymmetric family of distributions: Properties and
  inference.
\newblock {\em International Statistical Review\/}~{\em 87\/}(3), 471--504.

\bibitem[\protect\citeauthoryear{Gu and Zou}{Gu and Zou}{2016}]{YuwenGu}
Gu, Y. and H.~Zou (2016).
\newblock High-dimensional generalizations of asymmetric least squares
  regression and their applications.
\newblock {\em The Annals of Statistics\/}~{\em 44\/}(6), 2661--2694.

\bibitem[\protect\citeauthoryear{Gu and Zou}{Gu and Zou}{2019}]{YuwenGu3}
Gu, Y. and H.~Zou (2019).
\newblock Aggregated expectile regression by exponential weighting.
\newblock {\em Statistica Sinica\/}~{\em 29}, 671--692.

\bibitem[\protect\citeauthoryear{Guo and H\"{a}rdle}{Guo and
  H\"{a}rdle}{2012}]{MengmengGuo}
Guo, M. and W.~K. H\"{a}rdle (2012).
\newblock Simultaneous confidence bands for expectile functions.
\newblock {\em AStA Advances in Statistical Analysis\/}~{\em 96}, 517--541.

\bibitem[\protect\citeauthoryear{H\'{a}jek}{H\'{a}jek}{1962}]{Hajek}
H\'{a}jek, J. (1962).
\newblock Asymptotically most powerful rank-order tests.
\newblock {\em The Annals of Mathematical Statistics\/}~{\em 33\/}(3),
  1124--1147.

\bibitem[\protect\citeauthoryear{He and Shi}{He and Shi}{1996}]{XumingHe}
He, X. and P.~Shi (1996).
\newblock Bivariate tensor-product {B}-splines in a partly linear model.
\newblock {\em Journal of Multivariate Analysis\/}~{\em 58}, 162--181.

\bibitem[\protect\citeauthoryear{Hinkley and Revankar}{Hinkley and
  Revankar}{1977}]{Hinkley-Revankar}
Hinkley, D.~V. and N.~S. Revankar (1977).
\newblock Estimation of the pareto law from underreported data: A further
  analysis.
\newblock {\em Journal of Econometrics\/}~{\em 5\/}(1), 1--11.

\bibitem[\protect\citeauthoryear{Horowitz and Lee}{Horowitz and
  Lee}{2007a}]{Horowitz2}
Horowitz, J.~L. and S.~Lee (2007a).
\newblock Nonparametric estimation of an additive quantile regression model.
\newblock {\em Journal of the American Statistical Association\/}~{\em
  100\/}(472), 1238--1249.

\bibitem[\protect\citeauthoryear{Horowitz and Lee}{Horowitz and
  Lee}{2007b}]{Horowitz}
Horowitz, J.~L. and S.~Lee (2007b).
\newblock Nonparametric instrumental variables estimation of a quantile
  regression model.
\newblock {\em Econometrica\/}~{\em 75}, 1191--1208.

\bibitem[\protect\citeauthoryear{Ichimura}{Ichimura}{1993}]{Ichimura}
Ichimura, H. (1993).
\newblock Semiparametric least squares ({SLS}) and weighted {SLS} estimation of
  single-index models.
\newblock {\em Journal of Econometrics\/}~{\em 58\/}(1), 71--120.

\bibitem[\protect\citeauthoryear{Jiang, Jiang, Xu, and Huang}{Jiang
  et~al.}{2017}]{CuixiaJiang}
Jiang, C., M.~Jiang, Q.~Xu, and X.~Huang (2017).
\newblock Expectile regression neural network model with applications.
\newblock {\em Neurocomputing\/}~{\em 247}, 73 -- 86.

\bibitem[\protect\citeauthoryear{Kato}{Kato}{2011}]{Kato}
Kato, K. (2011).
\newblock Group {L}asso for high dimensional sparse quantile regression models.
\newblock arXiv:1103.1458v2.

\bibitem[\protect\citeauthoryear{Kenko}{Kenko}{2012}]{Kenko}
Kenko, K. (2012).
\newblock Estimation in functional linear quantile regression.
\newblock {\em The Annals of Statistics\/}~{\em 40\/}(6), 3108--3136.

\bibitem[\protect\citeauthoryear{Koenker and Bassett}{Koenker and
  Bassett}{1978}]{RogerKoenker}
Koenker, R. and G.~Bassett (1978).
\newblock Regression quantiles.
\newblock {\em Econometrica\/}~{\em 46}, 33--50.

\bibitem[\protect\citeauthoryear{LeCam}{LeCam}{1970}]{LeCam2}
LeCam, L.~M. (1970).
\newblock On the assumptions used to prove asymptotic normality of maximum
  likelihood estimates.
\newblock {\em The Annals of Mathematical Statistics\/}~{\em 41\/}(3),
  802--828.

\bibitem[\protect\citeauthoryear{LeCam}{LeCam}{1973}]{LeCam}
LeCam, L.~M. (1973).
\newblock Convergence of estimates under dimensionality restrictions.
\newblock {\em The Annals of Statistics\/}~{\em 1\/}(1), 38--53.

\bibitem[\protect\citeauthoryear{Lee}{Lee}{2003}]{Lee}
Lee, S. (2003).
\newblock Efficient semiparametric estimation of a partially linear quantile
  regression model.
\newblock {\em Econometric Theory\/}~{\em 19}, 1--31.

\bibitem[\protect\citeauthoryear{Lorentz, Golitschek, and Makovoz}{Lorentz
  et~al.}{1996}]{Lorentz}
Lorentz, G.~G., M.~V. Golitschek, and Y.~Makovoz (1996).
\newblock {\em Constructive approximation: advanced problems.}
\newblock Springer.

\bibitem[\protect\citeauthoryear{Lv, Lin, Lian, and Huang}{Lv
  et~al.}{2018}]{ShaogaoLv}
Lv, S., H.~Lin, H.~Lian, and J.~Huang (2018).
\newblock Oracle inequalities for sparse additive quantile regression in
  reproducing kernel {H}ilbert space.
\newblock {\em The Annals of Statistics\/}~{\em 46\/}(2), 781--813.

\bibitem[\protect\citeauthoryear{Ma and He}{Ma and He}{2016}]{MaS}
Ma, S. and X.~He (2016).
\newblock Inference for single-index quantile regression models with profile
  optimization.
\newblock {\em The Annals of Statistics\/}~{\em 44\/}(3), 1234--1268.

\bibitem[\protect\citeauthoryear{Madan and Seneta}{Madan and
  Seneta}{1990}]{Madan-Seneta}
Madan, D.~B. and E.~Seneta (1990).
\newblock The variance gamma (v.g.) model for share market returns.
\newblock {\em Journal of Business\/}~{\em 63\/}(4), 511--524.

\bibitem[\protect\citeauthoryear{Nassiri and Loris}{Nassiri and
  Loris}{2013}]{Nassiri}
Nassiri, V. and I.~Loris (2013).
\newblock A generalized quantile regression model.
\newblock {\em Journal of Applied Statistics\/}~{\em 40\/}(5), 1090--1105.

\bibitem[\protect\citeauthoryear{Newey and Powell}{Newey and
  Powell}{1987}]{Newey}
Newey, W.~K. and J.~L. Powell (1987).
\newblock Asymmetric least squares estimation and testing.
\newblock {\em Econometrica\/}~{\em 55\/}(4), 819--847.

\bibitem[\protect\citeauthoryear{O'Hagan and Leonard}{O'Hagan and
  Leonard}{1976}]{OHagan}
O'Hagan, A. and T.~Leonard (1976).
\newblock Bayes estimation subject to uncertainty about parameters constraints.
\newblock {\em Biometrika\/}~{\em 63\/}(1), 201--203.

\bibitem[\protect\citeauthoryear{Pollard}{Pollard}{1991}]{Pollard}
Pollard, D. (1991).
\newblock Asymptotics for least absolute deviation regression estimators.
\newblock {\em Econometric Theory\/}~{\em 7\/}(2), 186--199.

\bibitem[\protect\citeauthoryear{Pollard}{Pollard}{2005}]{Pollard2}
Pollard, D. (2005).
\newblock Hellinger differentiability.
\newblock Available at
  {\url{http://www.stat.yale.edu/~pollard/Courses/607.spring05/handouts/DQM.pdf}}.

\bibitem[\protect\citeauthoryear{Raskutti, Wainwright, and Yu}{Raskutti
  et~al.}{2012}]{Raskutti}
Raskutti, G., M.~J. Wainwright, and B.~Yu (2012).
\newblock Minimax-optimal rates for sparse additive models over kernel classes
  via convex programming.
\newblock {\em Journal of Machine Learning Research\/}~{\em 13}, 389--427.

\bibitem[\protect\citeauthoryear{Shan and Yang}{Shan and
  Yang}{2009}]{KejiaShan}
Shan, K. and Y.~Yang (2009).
\newblock Combining regression quantile estimators.
\newblock {\em Statistica Sinica\/}~{\em 19}, 1171--1191.

\bibitem[\protect\citeauthoryear{Sherwood and Wang}{Sherwood and
  Wang}{2016}]{Sherwood}
Sherwood, B. and L.~Wang (2016).
\newblock Partially linear additive quantile regression in ultra-high
  dimension.
\newblock {\em The Annals of Statistics\/}~{\em 44\/}(1), 288--317.

\bibitem[\protect\citeauthoryear{Sobotka, Kauermann, Waltrup, and
  Kneib}{Sobotka et~al.}{2013}]{Sobotka}
Sobotka, F., G.~Kauermann, L.~S. Waltrup, and T.~Kneib (2013).
\newblock On confidence intervals for semiparametric expectile regression.
\newblock {\em Statistics and Computing\/}~{\em 23}, 135--148.

\bibitem[\protect\citeauthoryear{Spiegel, Kneib, von Gablenz, and
  Otto-Sobotka}{Spiegel et~al.}{2021}]{Spiegel2}
Spiegel, E., T.~Kneib, P.~von Gablenz, and F.~Otto-Sobotka (2021).
\newblock Generalized expectile regression with flexible response function.
\newblock {\em Biometrical Journal\/}.
\newblock DOI: 10.1002/bimj.202000203.

\bibitem[\protect\citeauthoryear{Taylor}{Taylor}{2008}]{Taylor}
Taylor, J.~W. (2008).
\newblock Using exponentially weighted quantile regression to estimate value at
  risk and expected shortfall.
\newblock {\em Journal of Financial Econometrics\/}~{\em 6\/}(3), 382--406.

\bibitem[\protect\citeauthoryear{Taylor}{Taylor}{2019}]{Taylor2}
Taylor, J.~W. (2019).
\newblock Forecasting value at risk and expected shortfall using a
  semiparametric approach based on the asymmetric laplace distribution.
\newblock {\em Journal of Business \& Economic Statistics\/}~{\em 37\/}(1),
  121--133.

\bibitem[\protect\citeauthoryear{Wang, Wu, and Li}{Wang et~al.}{2012}]{LanWang}
Wang, L., Y.~Wu, and R.~Li (2012).
\newblock Quantile regression for analyzing heterogeneity in ultra-high
  dimension.
\newblock {\em Journal of the American Statistical Association\/}~{\em
  107\/}(497), 214--222.

\bibitem[\protect\citeauthoryear{Wang, Wang, and Lai}{Wang
  et~al.}{2011}]{Yongqiao}
Wang, Y., S.~Wang, and K.~Lai (2011).
\newblock Measuring financial risk with generalized asymmetric least squares
  regression.
\newblock {\em Applied Soft Computing\/}~{\em 11}, 5793--5800.

\bibitem[\protect\citeauthoryear{Wang, Paterlini, Gao, and Yang}{Wang
  et~al.}{2014}]{WangZhan}
Wang, Z., S.~Paterlini, F.~Gao, and Y.~Yang (2014).
\newblock Adaptive minimax regression estimation over sparse $l_q$-hulls.
\newblock {\em Journal of Machine Learning Research\/}~{\em 15}, 1675--1711.

\bibitem[\protect\citeauthoryear{Wu, Yu, and Yu}{Wu et~al.}{2010}]{TracyWu}
Wu, T.~Z., K.~Yu, and Y.~Yu (2010).
\newblock Single-index quantile regression.
\newblock {\em Journal of Multivariate Analysis\/}~{\em 101}, 1607--1621.

\bibitem[\protect\citeauthoryear{Xing and Qian}{Xing and Qian}{2017}]{JijiXing}
Xing, J.-J. and X.-Y. Qian (2017).
\newblock Bayesian expectile regression with asymmetric normal distribution.
\newblock {\em Communications in Statistics - Theory and Methods\/}~{\em
  46\/}(9), 4545--4555.

\bibitem[\protect\citeauthoryear{Xu, Ding, Jiang, Yu, and Shi}{Xu
  et~al.}{2021}]{QFXu}
Xu, Q., X.~Ding, C.~Jiang, K.~Yu, and L.~Shi (2021).
\newblock An elastic-net penalized expectile regression with applications.
\newblock {\em Journal of Applied Statistics\/}~{\em 48\/}(12), 2205--2230.

\bibitem[\protect\citeauthoryear{Yang and Yang}{Yang and
  Yang}{2019}]{YangYang2019}
Yang, G. and Y.~Yang (2019).
\newblock Minimax-rate adaptive nonparametric regression with unknown
  correlations of errors.
\newblock {\em Science China Mathematics\/}~{\em 62\/}(2), 227--244.

\bibitem[\protect\citeauthoryear{Yang}{Yang}{1999}]{Yang4}
Yang, Y. (1999).
\newblock Minimax nonparametric classification—part {I}: Rates of
  convergence.
\newblock {\em IEEE Transactions on Information Theory\/}~{\em 45\/}(7),
  2271--2284.

\bibitem[\protect\citeauthoryear{Yang}{Yang}{2001}]{Yang2001}
Yang, Y. (2001).
\newblock Nonparametric regression with dependent errors.
\newblock {\em Bernoulli\/}~{\em 7\/}(4), 633--655.

\bibitem[\protect\citeauthoryear{Yang}{Yang}{2004}]{Yang3}
Yang, Y. (2004).
\newblock Combining forecasting procedures: some theoretical results.
\newblock {\em Econometric Theory\/}~{\em 20}, 176--222.

\bibitem[\protect\citeauthoryear{Yang and Barron}{Yang and Barron}{1999}]{Yang}
Yang, Y. and A.~Barron (1999).
\newblock Information-theoretic determination of minimax rates of convergence.
\newblock {\em The Annals of Statistics\/}~{\em 27\/}(5), 1564--1599.

\bibitem[\protect\citeauthoryear{Yang, Teng, and Zou}{Yang
  et~al.}{2018}]{YiYang}
Yang, Y., Z.~Teng, and H.~Zou (2018).
\newblock Flexible expectile regression in reproducing kernel {H}ilbert spaces.
\newblock {\em Technometrics\/}~{\em 60\/}(1), 26--35.

\bibitem[\protect\citeauthoryear{Yang and Zou}{Yang and Zou}{2015}]{YiYang2}
Yang, Y. and H.~Zou (2015).
\newblock Nonparametric multiple expectile regression via {ER}-boost.
\newblock {\em Journal of Statistical Computation and Simulation\/}~{\em
  85\/}(7), 1442--1458.

\bibitem[\protect\citeauthoryear{Yao and Tong}{Yao and Tong}{1996}]{YaoQiwei}
Yao, Q. and H.~Tong (1996).
\newblock Asymmetric least squares regression estimation: {A} non-parametric
  approach.
\newblock {\em Journal of Nonparametric Statistics\/}~{\em 6\/}(2-3), 273--292.

\bibitem[\protect\citeauthoryear{Yatracos}{Yatracos}{1985}]{Yatracos}
Yatracos, Y.~G. (1985).
\newblock Rates of convergence of minimum distance estimators and
  {K}olmogorov’s entropy.
\newblock {\em The Annals of Statistics\/}~{\em 13}, 768--774.

\bibitem[\protect\citeauthoryear{Zhao, Yan, and Zhang}{Zhao
  et~al.}{2019}]{JunZhao2}
Zhao, J., G.~Yan, and Y.~Zhang (2019).
\newblock Semiparametric expectile regression for high-dimensional heavy-tailed
  and heterogeneous data.
\newblock arXiv: 1908.06431.

\end{thebibliography}

\newpage
\section{Supplementary Materials -- Proofs of Corollaries 1 and 2}\label{supplementary materials}

\subsection{Proof of \Cref{cor1}}

Based on \Cref{th1}, we only need to show that \Cref{con1} and \Cref{con2} are satisfied for the error distribution location-scale family.

\subsubsection{Proof of \Cref{cor1} with ALD error distribution}
\begin{proof}

If the error distribution is ALD with location parameter $\eta$ and scale parameter $\sigma$, our target location family has the density
\begin{equation*} 
f_{\eta,\sigma}(y)=\frac{\tau(1-\tau)}{\sigma}\exp\Big[{-\rho_{\tau}\Big(\frac{y-\eta}{\sigma}\Big)}\Big]. 
\end{equation*}
We first show \Cref{con1} holds by showing $D(f_{0,1}||f_{\eta,\sigma})$ is twice differentiable with respect to $(\eta,\sigma)$ at $(0,1)$. Given $\tau$, we have

\begin{equation*}
\begin{aligned}
	D(f_{0,1}||f_{\eta,\sigma})=\frac{1}{\sigma}\mathbb{E}(\rho_\tau(Y-\eta)-\rho_\tau(Y)),
\end{aligned}
\end{equation*}
where $Y\sim f_{0,1}$. If $\eta>0$, by direct calculation, we have
\begin{equation*}
\begin{aligned}
\mathbb{E}[\rho_{\tau}(Y-\eta)]
&=\mathbb{E}(\tau(Y-\eta)I(Y>\eta)+(\tau-1)(Y-\eta)I(Y\le\eta))\\
&=\frac{\tau(1-\tau)}{\sigma}\Big[\frac{\sigma^2}{\tau^2}e^{-\frac{\tau}{\sigma}\eta}+\frac{\sigma}{\tau}\eta+\frac{\sigma^2}{1-\tau}-\frac{(1-\tau)\sigma^2}{\tau^2}\Big].\\
\end{aligned}
\end{equation*}

If $\eta\le 0$, similarly we have
\begin{equation*}
\begin{aligned}
\mathbb{E}[\rho_{\tau}(Y-\eta)]
&=\mathbb{E}(\tau(Y-\eta)I(Y>\eta)+(\tau-1)(Y-\eta)I(Y\le\eta))\\
&=\frac{\tau(1-\tau)}{\sigma}\Big[\frac{\sigma^2}{(1-\tau)^2}e^{\frac{1-\tau}{\sigma}\eta}-\frac{\sigma}{1-\tau}\eta+\frac{\sigma^2}{\tau}-\frac{\tau\sigma^2}{(1-\tau)^2}\Big].\\
\end{aligned}
\end{equation*}

In particular, 
\begin{equation*}
\begin{aligned}
	\mathbb{E}\rho_{\tau}(Y)
	&=\frac{\tau(1-\tau)}{\sigma}\Big(\frac{\sigma^2}{\tau}+\frac{\sigma^2}{1-\tau}\Big)=\sigma.
\end{aligned}
\end{equation*}

It is seen that the K-L divergence $D(f_{0,1}||f_{\eta,\sigma})$ is twice differentiable with respect to $(\eta,\sigma)$ at $(0,1)$. Hence, we have that \Cref{con1} is satisfied.

Next, we verify \Cref{con2}. We check the Hellinger differentiability of the error distribution family by applying \Cref{lm2}. First, $g(y)$ is absolutely continuous, which means \Cref{lm2}$(i)$ holds. Second, for $y\neq 0$, $g(y)$ has almost sure derivatives
\begin{equation*}
	\dot g(y)=\tau(1-\tau)\{-\tau\exp[-\tau yI(y>0)]+(1-\tau)\exp[(1-\tau)yI(y<\eta)]\},
\end{equation*}
which means \Cref{lm2}$(ii)$ is satisfied.

Also, for $y\neq \eta$, $\dot{\xi}_{(\eta,1)_\eta}^2(y)$ is
\begin{equation*}
\begin{aligned}
\dot{\xi}_{(\eta,1)_\eta}^2(y)
&=\frac{\tau(1-\tau)}{4}\{\tau^2\exp[-\tau (y-\eta)I(y>\eta)]+(\tau-1)^2\exp[(1-\tau)(y-\eta)I(y<\eta)]\},
\end{aligned}
\end{equation*}
and $\int \dot{\xi}_{(\eta,1)_\eta}^2(y)dy=\frac{\tau(1-\tau)}{4}$, we have the function $\dot\xi_{(\eta,1)_\eta}(y):=-\frac{1}{2}\dot g(y-\eta)I(g(y-\eta)>0)/\sqrt{g(y-\eta)}$ is squared integrable and $\mathbb{E}\dot{\xi}_{(\eta,1)_\eta}^2(y)\to \mathbb{E}\dot{\xi}_{(0,1)_\eta}^2(y)$ as $\eta\to 0$, which implies \Cref{lm2}$(iii)$. Moreover, since
\begin{equation*}
\begin{aligned}
\int &y^2\dot{\xi}^2_{(0,1)_\eta}(y)dy\\
&=\frac{\tau(1-\tau)}{4}\int\{\tau^2y^2\exp[-\tau yI(y>0)]+(\tau-1)^2y^2\exp[(1-\tau)yI(y<0)]\}dy<\infty,
\end{aligned}
\end{equation*}
we have \Cref{lm2}$(iv)$ holds. Finally, for each $y$, by Taylor expansion of the function $s_\sigma(y)=\dot\xi_{(0,\sigma)_\eta}(y)$ with respect to $\sigma$ at 1, we have
\begin{equation*}
	s_\sigma(y) = s_{\sigma=1}(y)+(\sigma-1)\dot s_{\sigma=1}(y)+o(\lvert \sigma-1\rvert),
\end{equation*}
where $\dot s_{\sigma=1}(y)$ is square integrable, we have 
\begin{equation*}
	\begin{aligned}
		\lVert \dot\xi_{(0,\sigma)_\eta}(y)-\dot\xi_{(0,1)_\eta}(y)\lVert_2=\lVert (\sigma-1)\dot s_{\sigma=1}(y)+o(\lvert \sigma-1\rvert)\lVert_2=O(\lvert \sigma-1\rvert).
	\end{aligned}
\end{equation*}

Hence, \Cref{lm2}$(v)$ is satisfied and we have that the location-scale family $\{\frac{1}{\sigma}f(\frac{x-\eta}{\sigma}),\eta\in\mathbb{R},\sigma\in(0,+\infty)\}$ is Hellinger differentiable with respect to $(\eta,\sigma)$ at $(0,1)$. Therefore, \Cref{con2} is satisfied.

\end{proof}

\subsubsection{Proof of \Cref{cor1} with ACDTG error distribution}
\begin{proof}
Although we have the shape parameter $\alpha\ge 0$, the following proof is valid only for $\alpha>0$. Fortunately, when $\alpha=0$, the ACDTG is the same as ALD, which has already been handled.

For the error distribution ACDTG with location parameter $\eta$ and scale parameter $\sigma$, our target location-scale family then has the density 
\begin{equation*}
f_{\eta,\sigma}(y)=\frac{\tau(1-\tau)}{\sigma\Gamma(\alpha+1,\alpha)}[\alpha+\rho_\tau(\frac{y-\eta}{\sigma})]^\alpha \exp\{-[\alpha+\rho_\tau(\frac{y-\eta}{\sigma})]\}.
\end{equation*}

For the K-L divergence, given $\tau$, we have
\begin{equation*}
\begin{aligned}
    D(f_{0,1}||f_{\eta,\sigma})=\alpha \mathbb{E}[\log(\alpha+\rho_\tau(Y))-\log(\alpha+\rho_\tau(\frac{Y-\eta}{\sigma}))]+\mathbb{E}[\rho_{\tau}(\frac{Y-\eta}{\sigma})-\rho_{\tau}(Y)]+\log\sigma,
\end{aligned}
\end{equation*}
where $Y\sim f_{0,1}$.

To show \Cref{con1} is satisfied, it suffice to show that $D(f_{0,1}||f_{\eta,\sigma})$ is finite and twice differentiable with respect to $(\eta,\sigma)$ at $(0,1)$. If $\eta>0$, we have
\begin{equation*}
	\begin{aligned}
		&\mathbb{E}\log(\alpha+\rho_\tau(\frac{Y-\eta}{\sigma}))\\
		&=\frac{\tau(1-\tau)}{\Gamma(\alpha+1,\alpha)}\Big[\int_{\eta}^\infty\log(\alpha+\tau (\frac{y-\eta}{\sigma}))(\alpha+\tau y)^{\alpha}e^{-(\alpha+\tau y)}dy\\
		&~~~~~~~~~~~~~~~~~~~+\int^{\eta}_0\log(\alpha+(\tau-1)(\frac{y-\eta}{\sigma}))(\alpha+\tau y)^{\alpha}e^{-(\alpha+\tau y)}dy\\
		&~~~~~~~~~~~~~~~~~~~+\int_{-\infty}^0\log(\alpha+(\tau-1)(\frac{y-\eta}{\sigma}))(\alpha+(\tau-1) y)^{\alpha}e^{-(\alpha+(\tau-1)y)}dy\Big],\\
	\end{aligned}
\end{equation*}
and
\begin{equation*}
	\begin{aligned}
		&\mathbb{E}\log(\alpha+\rho_\tau(Y))\\
		&=\frac{\tau(1-\tau)}{\Gamma(\alpha+1,\alpha)}\Big[\int_{0}^\infty\log(\alpha+\tau y)(\alpha+\tau y)^{\alpha}e^{-(\alpha+\tau y)}dy\\
		&~~~~~~~~~~~~~~~~~~~+\int_{-\infty}^0\log(\alpha+(\tau-1)y)(\alpha+(\tau-1) y)^{\alpha}e^{-(\alpha+(\tau-1)y)}dy\Big].\\
	\end{aligned}
\end{equation*}

Also, we have
\begin{equation*}
\begin{aligned}
&\mathbb{E}\rho_{\tau}(\frac{Y-\eta}{\sigma})\\
&=\mathbb{E}(\tau(\frac{Y-\eta}{\sigma})I(Y>\eta)+(\tau-1)(\frac{Y-\eta}{\sigma})I(Y\le\eta))\\
&=\frac{\tau(1-\tau)}{\Gamma(\alpha+1,\alpha)}\Big[\tau\int_{\eta}^\infty(\frac{y-\eta}{\sigma})(\alpha+\tau y)^\alpha e^{-(\alpha+\tau y)}dy-(1-\tau)\int_0^{\eta}(\frac{y-\eta}{\sigma})(\alpha+\tau y)^\alpha e^{-(\alpha+\tau y)}dy\\
&~~~~~~~~~~~~~~~~~~~~~~~-(1-\tau)\int_{-\infty}^0(\frac{y-\eta}{\sigma})(\alpha+(\tau-1) y)^\alpha e^{-(\alpha+(\tau-1) y)}dy\Big],
\end{aligned}
\end{equation*}
and
\begin{equation*}
\begin{aligned}
\mathbb{E}\rho_{\tau}(Y)
&=\mathbb{E}(\tau YI(Y>0)+(\tau-1)YI(Y\le 0))\\
&=\frac{\tau(1-\tau)}{\Gamma(\alpha+1,\alpha)}\Big[\tau\int_0^\infty y(\alpha+\tau y)^\alpha e^{-(\alpha+\tau y)}dy\\
&~~~~~~~~~~~~~-(1-\tau)\int_{-\infty}^0 y(\alpha+(\tau-1)y)^\alpha e^{-(\alpha+(\tau-1)y)}dy\Big].
\end{aligned}
\end{equation*}

Thus, the K-L divergence between $f_{0,1}$ and $f_{\eta,\sigma}$ is
\begin{equation*}
\begin{aligned}
	&\alpha \mathbb{E}[\log(\alpha+\rho_\tau(Y))-\log(\alpha+\rho_\tau(\frac{Y-\eta}{\sigma}))]+\mathbb{E}[\rho_{\tau}(\frac{Y-\eta}{\sigma})-\rho_{\tau}(Y)]+\log\sigma \\
	&=\alpha\frac{\tau(1-\tau)}{\Gamma(\alpha+1,\alpha)}\Big[(\int_{0}^\infty\log(\alpha+\tau y)(\alpha+\tau y)^{\alpha}e^{-(\alpha+\tau y)}dy\\
		&~~~~~~~~~~~~+\int_{-\infty}^0\log(\alpha+(\tau-1)y)(\alpha+(\tau-1) y)^{\alpha}e^{-(\alpha+(\tau-1)y)}dy\\
		&~~~~~~~~~~~~-\int_{\eta}^\infty\log(\alpha+\tau (\frac{y-\eta}{\sigma}))(\alpha+\tau y)^{\alpha}e^{-(\alpha+\tau y)}dy\\
		&~~~~~~~~~~~~-\int^{\eta}_0\log(\alpha+(\tau-1)(\frac{y-\eta}{\sigma}))(\alpha+\tau y)^{\alpha}e^{-(\alpha+\tau y)}dy\\
		&~~~~~~~~~~~~-\int_{-\infty}^0\log(\alpha+(\tau-1)(\frac{y-\eta}{\sigma}))(\alpha+(\tau-1) y)^{\alpha}e^{-(\alpha+(\tau-1)y)}dy\\
	&~~~~~~~~~~~~+\frac{\tau}{\alpha}\int_{\eta}^\infty(\frac{y-\eta}{\sigma})(\alpha+\tau y)^\alpha e^{-(\alpha+\tau y)}dy-\frac{1-\tau}{\alpha}\int_0^{\eta}(\frac{y-\eta}{\sigma})(\alpha+\tau y)^\alpha e^{-(\alpha+\tau y)}dy\\
    &~~~~~~~~~~~~-\frac{1-\tau}{\alpha}\int_{-\infty}^0(\frac{y-\eta}{\sigma})(\alpha+(\tau-1) y)^\alpha e^{-(\alpha+(\tau-1) y)}dy\\
    &~~~~~~~~~~~~-\frac{\tau}{\alpha}\int_0^\infty y(\alpha+\tau y)^\alpha e^{-(\alpha+\tau y)}dy+\frac{1-\tau}{\alpha}\int_{-\infty}^0 y(\alpha+(\tau-1)y)^\alpha e^{-(\alpha+(\tau-1)y)}dy\Big]+\log\sigma.
	\end{aligned}
\end{equation*}

It is straightforward to verify that the above K-L divergence is finite for each $(\eta,\sigma)$. By the Leibnitz integral rule, take the derivative of $D(f_{0,1}||f_{\eta,\sigma})$ with respect to $\eta$, we have 
\begin{equation*}
	\begin{aligned}
		&\frac{dD(f_{0,1}||f_{\eta,\sigma})}{d\eta}=\alpha\frac{\tau(1-\tau)}{\Gamma(\alpha+1,\alpha)}\Big[\int_{\eta}^\infty\frac{\tau}{\alpha\sigma+\tau(y-\eta)}(\alpha+\tau y)^{\alpha}e^{-(\alpha+\tau y)}dy\\
		&~~~~~~~~~~~~~~~~~~~~~~~~~~~~~~~-\int_{0}^\eta\frac{1-\tau}{\alpha\sigma+(\tau-1)(y-\eta)}(\alpha+\tau y)^{\alpha}e^{-(\alpha+\tau y)}dy\\
		&~~~~~~~~~~~~~~~~~~~~~~~~~~~~~~~-\int_{-\infty}^0\frac{1-\tau}{\alpha\sigma+(\tau-1)(y-\eta)}(\alpha+(\tau-1)y)^{\alpha}e^{-(\alpha+(\tau-1)y)}dy\\
		&~~~~~~~~~~~~~~~~~~~~~~~~~~~~~~~-\frac{1}{\alpha\sigma}\int_{\eta}^\infty\tau(\alpha+\tau y)^{\alpha}e^{-(\alpha+\tau y)}dy+\frac{1}{\alpha\sigma}\int_{0}^\eta(1-\tau)(\alpha+\tau y)^{\alpha}e^{-(\alpha+\tau y)}dy\\
		&~~~~~~~~~~~~~~~~~~~~~~~~~~~~~~~+\frac{1}{\alpha\sigma}\int_{-\infty}^0(1-\tau)(\alpha+(\tau-1)y)^{\alpha}e^{-(\alpha+(\tau-1)y)}dy\Big].
	\end{aligned}
\end{equation*}

Take the derivative of $\frac{dD(f_{0,1}||f_{\eta,\sigma})}{d\eta}$ with respect to $\eta$, we have
\begin{equation*}
	\begin{aligned}
		&\frac{d^2D(f_{0,1}||f_{\eta,\sigma})}{d\eta^2}=\alpha\frac{\tau(1-\tau)}{\Gamma(\alpha+1,\alpha)}\Big[\int_{\eta}^\infty\frac{\tau^2}{(\alpha\sigma+\tau(y-\eta))^2}(\alpha+\tau y)^{\alpha}e^{-(\alpha+\tau y)}dy\\
		&~~~~~~~~~~~~~~~~~~~~~~~~~~~~~~~~~~~~+\int_{0}^\eta\frac{(1-\tau)^2}{(\alpha\sigma+(\tau-1)(y-\eta))^2}(\alpha+\tau y)^{\alpha}e^{-(\alpha+\tau y)}dy\\
		&~~~~~~~~~~~~~~~~~~~~~~~~~~~~~~~~~~~~+\int_{-\infty}^0\frac{(1-\tau)^2}{(\alpha\sigma+(\tau-1)(y-\eta))^2}(\alpha+(\tau-1)y)^{\alpha}e^{-(\alpha+(\tau-1)y)}dy.
	\end{aligned}
\end{equation*}

Take the derivative of $\frac{dD(f_{0,1}||f_{\eta,\sigma})}{d\eta}$ with respect to $\sigma$, we have 
\begin{equation*}
	\begin{aligned}
		&\frac{d^2D(f_{0,1}||f_{\eta,\sigma})}{d\eta d\sigma}=\alpha\frac{\tau(1-\tau)}{\Gamma(\alpha+1,\alpha)}\Big[-\alpha\int_{\eta}^\infty\frac{\tau}{(\alpha\sigma+\tau(y-\eta))^2}(\alpha+\tau y)^{\alpha}e^{-(\alpha+\tau y)}dy\\
		&~~~~~~~~~~~~~~~~~~~~~~~~~~~~~~+\alpha\int_{0}^\eta\frac{1-\tau}{(\alpha\sigma+(\tau-1)(y-\eta))^2}(\alpha+\tau y)^{\alpha}e^{-(\alpha+\tau y)}dy\\
		&~~~~~~~~~~~~~~~~~~~~~~~~~~~~~~+\alpha\int_{-\infty}^0\frac{1-\tau}{(\alpha\sigma+(\tau-1)(y-\eta))^2}(\alpha+(\tau-1)y)^{\alpha}e^{-(\alpha+(\tau-1)y)}dy\\
		&~~~~~~~~~~~~~~~~~~~~~~~~~~~~~~+\frac{1}{\alpha\sigma^2}\int_{\eta}^\infty\tau(\alpha+\tau y)^{\alpha}e^{-(\alpha+\tau y)}dy-\frac{1}{\alpha\sigma^2}\int_{0}^\eta(1-\tau)(\alpha+\tau y)^{\alpha}e^{-(\alpha+\tau y)}dy\\
		&~~~~~~~~~~~~~~~~~~~~~~~~~~~~~~-\frac{1}{\alpha\sigma^2}\int_{-\infty}^0(1-\tau)(\alpha+(\tau-1)y)^{\alpha}e^{-(\alpha+(\tau-1)y)}dy\Big].
	\end{aligned}
\end{equation*}

Take the derivative of $D(f_{0,1}||f_{\eta,\sigma})$ with respect to $\sigma$, we have 
\begin{equation*}
	\begin{aligned}
		&\frac{dD(f_{0,1}||f_{\eta,\sigma})}{d\sigma}=\alpha\frac{\tau(1-\tau)}{\Gamma(\alpha+1,\alpha)}\Big[\frac{y-\eta}{\sigma}\int_{\eta}^\infty\frac{\tau}{\alpha\sigma+\tau(y-\eta)}(\alpha+\tau y)^{\alpha}e^{-(\alpha+\tau y)}dy\\
		&~~~~~~~~~~~~~~~~~~~~~~~~~~~~-\frac{y-\eta}{\sigma}\int_{0}^\eta\frac{1-\tau}{\alpha\sigma+(\tau-1)(y-\eta)}(\alpha+\tau y)^{\alpha}e^{-(\alpha+\tau y)}dy\\
		&~~~~~~~~~~~~~~~~~~~~~~~~~~~~-\frac{y-\eta}{\sigma}\int_{-\infty}^0\frac{1-\tau}{\alpha\sigma+(\tau-1)(y-\eta)}(\alpha+(\tau-1)y)^{\alpha}e^{-(\alpha+(\tau-1)y)}dy\\
		&~~~~~~~~~~~~~~~~~~~~~~~~~~~~-\frac{1}{\alpha\sigma^2}\int_{\eta}^\infty\tau(\alpha+\tau y)^{\alpha}e^{-(\alpha+\tau y)}dy+\frac{1}{\alpha\sigma^2}\int_{0}^\eta(1-\tau)(\alpha+\tau y)^{\alpha}e^{-(\alpha+\tau y)}dy\\
		&~~~~~~~~~~~~~~~~~~~~~~~~~~~~+\frac{1}{\alpha\sigma^2}\int_{-\infty}^0(1-\tau)(\alpha+(\tau-1)y)^{\alpha}e^{-(\alpha+(\tau-1)y)}dy\Big]+\frac{1}{\sigma}.
	\end{aligned}
\end{equation*}

Take the derivative of $\frac{dD(f_{0,1}||f_{\eta,\sigma})}{d\sigma}$ with respect to $\sigma$, we have 
\begin{equation*}
	\begin{aligned}
		&\frac{d^2D(f_{0,1}||f_{\eta,\sigma})}{d\sigma^2}=\alpha\frac{\tau(1-\tau)}{\Gamma(\alpha+1,\alpha)}\Big[-\frac{y-\eta}{\sigma^2}\int_{\eta}^\infty\frac{\tau}{\alpha\sigma+\tau(y-\eta)}(\alpha+\tau y)^{\alpha}e^{-(\alpha+\tau y)}dy\\
		&~~~~~~~~~~~~~~~~~~~~~~~~-\frac{\alpha(y-\eta)}{\sigma}\int_{\eta}^\infty\frac{\tau}{(\alpha\sigma+\tau(y-\eta))^2}(\alpha+\tau y)^{\alpha}e^{-(\alpha+\tau y)}dy\\
		&~~~~~~~~~~~~~~~~~~~~~~~~+\frac{y-\eta}{\sigma^2}\int_{0}^\eta\frac{1-\tau}{\alpha\sigma+(\tau-1)(y-\eta)}(\alpha+\tau y)^{\alpha}e^{-(\alpha+\tau y)}dy\\
		&~~~~~~~~~~~~~~~~~~~~~~~~+\frac{\alpha(y-\eta)}{\sigma}\int_{0}^\eta\frac{1-\tau}{(\alpha\sigma+(\tau-1)(y-\eta))^2}(\alpha+\tau y)^{\alpha}e^{-(\alpha+\tau y)}dy\\
		&~~~~~~~~~~~~~~~~~~~~~~~~+\frac{y-\eta}{\sigma^2}\int_{-\infty}^0\frac{1-\tau}{\alpha\sigma+(\tau-1)(y-\eta)}(\alpha+(\tau-1)y)^{\alpha}e^{-(\alpha+(\tau-1)y)}dy\\
		&~~~~~~~~~~~~~~~~~~~~~~~~+\frac{\alpha(y-\eta)}{\sigma}\int_{-\infty}^0\frac{1-\tau}{(\alpha\sigma+(\tau-1)(y-\eta))^2}(\alpha+(\tau-1)y)^{\alpha}e^{-(\alpha+(\tau-1)y)}dy\\
		&~~~~~~~~~~~~~~~~~~~~~~~~+\frac{2}{\alpha\sigma^3}\int_{\eta}^\infty\tau(\alpha+\tau y)^{\alpha}e^{-(\alpha+\tau y)}dy-\frac{2}{\alpha\sigma^3}\int_{0}^\eta(1-\tau)(\alpha+\tau y)^{\alpha}e^{-(\alpha+\tau y)}dy\\
		&~~~~~~~~~~~~~~~~~~~~~~~~-\frac{2}{\alpha\sigma^3}\int_{-\infty}^0(1-\tau)(\alpha+(\tau-1)y)^{\alpha}e^{-(\alpha+(\tau-1)y)}dy\Big]-\frac{1}{\sigma^2}.
	\end{aligned}
\end{equation*}

If $\eta< 0$, we obtain
\begin{equation*}
	\begin{aligned}
		&\mathbb{E}\log(\alpha+\rho_\tau(\frac{Y-\eta}{\sigma}))\\
		&=\frac{\tau(1-\tau)}{\Gamma(\alpha+1,\alpha)}\Big[\int_0^\infty\log(\alpha+\tau (\frac{y-\eta}{\sigma}))(\alpha+\tau y)^{\alpha}e^{-(\alpha+\tau y)}dy\\
		&~~~~~~~~~~~~~~~~~~~+\int_{\eta}^0\log(\alpha+\tau(\frac{y-\eta}{\sigma}))(\alpha+(\tau-1)y)^{\alpha}e^{-(\alpha+(\tau-1)y)}dy\\
		&~~~~~~~~~~~~~~~~~~~+\int_{-\infty}^\eta\log(\alpha+(\tau-1)(\frac{y-\eta}{\sigma}))(\alpha+(\tau-1) y)^{\alpha}e^{-(\alpha+(\tau-1)y)}dy\Big],\\
	\end{aligned}
\end{equation*}
and
\begin{equation*}
	\begin{aligned}
		&\mathbb{E}\log(\alpha+\rho_\tau(Y))\\
		&=\frac{\tau(1-\tau)}{\Gamma(\alpha+1,\alpha)}\Big[\int_{0}^\infty\log(\alpha+\tau y)(\alpha+\tau y)^{\alpha}e^{-(\alpha+\tau y)}dy\\
		&~~~~~~~~~~~~~~~~~~~+\int_{-\infty}^0\log(\alpha+(\tau-1)y)(\alpha+(\tau-1) y)^{\alpha}e^{-(\alpha+(\tau-1)y)}dy\Big].\\
	\end{aligned}
\end{equation*}

Also, we have
\begin{equation*}
\begin{aligned}
&\mathbb{E}\rho_{\tau}(\frac{Y-\eta}{\sigma})\\
&=\mathbb{E}(\tau(\frac{Y-\eta}{\sigma})I(Y>\eta)+(\tau-1)(\frac{Y-\eta}{\sigma})I(Y\le\eta))\\
&=\frac{\tau(1-\tau)}{\Gamma(\alpha+1,\alpha)}\Big[\tau\int_0^\infty(\frac{y-\eta}{\sigma})(\alpha+\tau y)^\alpha e^{-(\alpha+\tau y)}dy+\tau\int^0_{\eta}(\frac{y-\eta}{\sigma})(\alpha+(\tau-1)y)^\alpha e^{-(\alpha+(\tau-1)y)}dy\\
&~~~~~~~~~~~~~~~~~~~~~~~-(1-\tau)\int_{-\infty}^\eta(\frac{y-\eta}{\sigma})(\alpha+(\tau-1) y)^\alpha e^{-(\alpha+(\tau-1) y)}dy\Big],
\end{aligned}
\end{equation*}
and
\begin{equation*}
\begin{aligned}
\mathbb{E}\rho_{\tau}(Y)
&=\mathbb{E}(\tau YI(Y>0)+(\tau-1)YI(Y\le 0))\\
&=\frac{\tau(1-\tau)}{\Gamma(\alpha+1,\alpha)}\Big[\tau\int_0^\infty y(\alpha+\tau y)^\alpha e^{-(\alpha+\tau y)}dy\\
&~~~~~~~~~~~~~~~~~~~-(1-\tau)\int_{-\infty}^0 y(\alpha+(\tau-1)y)^\alpha e^{-(\alpha+(\tau-1)y)}dy\Big].
\end{aligned}
\end{equation*}

Thus, the K-L divergence between $f_{0,1}$ and $f_{\eta,\sigma}$ is
\begin{equation*}
\begin{aligned}
	&\alpha \mathbb{E}[\log(\alpha+\rho_\tau(Y))-\log(\alpha+\rho_\tau(\frac{Y-\eta}{\sigma}))]+\mathbb{E}[\rho_{\tau}(\frac{Y-\eta}{\sigma})-\rho_{\tau}(Y)]\\
	&=\alpha\frac{\tau(1-\tau)}{\Gamma(\alpha+1,\alpha)}\Big[(\int_{0}^\infty\log(\alpha+\tau y)(\alpha+\tau y)^{\alpha}e^{-(\alpha+\tau y)}dy\\
		&~~~~~~+\int_{-\infty}^0\log(\alpha+(\tau-1)y)(\alpha+(\tau-1) y)^{\alpha}e^{-(\alpha+(\tau-1)y)}dy\\
		&~~~~~~-\int_0^\infty\log(\alpha+\tau (\frac{y-\eta}{\sigma}))(\alpha+\tau y)^{\alpha}e^{-(\alpha+\tau y)}dy\\
		&~~~~~~-\int_{\eta}^0\log(\alpha+\tau(\frac{y-\eta}{\sigma}))(\alpha+(\tau-1)y)^{\alpha}e^{-(\alpha+(\tau-1)y)}dy\\
		&~~~~~~-\int_{-\infty}^\eta\log(\alpha+(\tau-1)(\frac{y-\eta}{\sigma}))(\alpha+(\tau-1) y)^{\alpha}e^{-(\alpha+(\tau-1)y)}dy\\
	&~~~~~~+\frac{\tau}{\alpha}\int_0^\infty(\frac{y-\eta}{\sigma})(\alpha+\tau y)^\alpha e^{-(\alpha+\tau y)}dy+\frac{\tau}{\alpha}\int^0_{\eta}(\frac{y-\eta}{\sigma})(\alpha+(\tau-1)y)^\alpha e^{-(\alpha+(\tau-1)y)}dy\\
    &~~~~~~-\frac{1-\tau}{\alpha}\int_{-\infty}^\eta(\frac{y-\eta}{\sigma})(\alpha+(\tau-1) y)^\alpha e^{-(\alpha+(\tau-1) y)}dy\\
    &~~~~~~-\frac{\tau}{\alpha}\int_0^\infty y(\alpha+\tau y)^\alpha e^{-(\alpha+\tau y)}dy+\frac{1-\tau}{\alpha}\int_{-\infty}^0 y(\alpha+(\tau-1)y)^\alpha e^{-(\alpha+(\tau-1)y)}dy\Big]+\log\sigma.
	\end{aligned}
\end{equation*}

Similarly as before, take the derivative of $D(f_{0,1}||f_{\eta,\sigma})$ with respect to $\eta$, we have
\begin{equation*}
	\begin{aligned}
		&\frac{dD(f_{0,1}||f_{\eta,\sigma})}{d\eta}=\alpha\frac{\tau(1-\tau)}{\Gamma(\alpha+1,\alpha)}\Big[\int_0^\infty\frac{\tau}{\alpha\sigma+\tau(y-\eta)}(\alpha+\tau y)^{\alpha}e^{-(\alpha+\tau y)}dy\\
		&~~~~~~~~~~~~~~~~~~~~~~~~~~+\int^{0}_\eta\frac{\tau}{\alpha\sigma+\tau(y-\eta)}(\alpha+(\tau-1)y)^{\alpha}e^{-(\alpha+(\tau-1)y)}dy\\
		&~~~~~~~~~~~~~~~~~~~~~~~~~~-\int_{-\infty}^\eta\frac{1-\tau}{\alpha\sigma+(\tau-1)(y-\eta)}(\alpha+(\tau-1)y)^{\alpha}e^{-(\alpha+(\tau-1)y)}dy\\
		&~~~~~~~~~~~~~~~~~~~~~~~~~~-\frac{1}{\alpha\sigma}\int_0^\infty\tau(\alpha+\tau y)^{\alpha}e^{-(\alpha+\tau y)}dy-\frac{1}{\alpha\sigma}\int^{0}_\eta\tau(\alpha+(\tau-1)y)^{\alpha}e^{-(\alpha+(\tau-1)y)}dy\\
		&~~~~~~~~~~~~~~~~~~~~~~~~~~+\frac{1}{\alpha\sigma}\int_{-\infty}^\eta(1-\tau)(\alpha+(\tau-1)y)^{\alpha}e^{-(\alpha+(\tau-1)y)}dy\Big].
	\end{aligned}
\end{equation*}

Take the derivative of $\frac{dD(f_{0,1}||f_{\eta,\sigma})}{d\eta}$ with respect to $\eta$, we have
\begin{equation*}
	\begin{aligned}
		&\frac{d^2D(f_0||f_{\eta})}{d\eta^2}=\alpha\frac{\tau(1-\tau)}{\Gamma(\alpha+1,\alpha)}\Big[\int_0^\infty\frac{\tau^2}{(\alpha\sigma+\tau(y-\eta))^2}(\alpha+\tau y)^{\alpha}e^{-(\alpha+\tau y)}dy\\
		&~~~~~~~~~~~~~~~~~~~~~~~~~~~~~~~~~~~~~~~~+\int^{0}_\eta\frac{\tau^2}{(\alpha\sigma+\tau(y-\eta))^2}(\alpha+(\tau-1)y)^{\alpha}e^{-(\alpha+(\tau-1)y)}dy\\
		&~~~~~~~~~~~~~~~~~~~~~~~~~~~~~~~~~~~~~~~~+\int_{-\infty}^\eta\frac{(1-\tau)^2}{(\alpha\sigma+(\tau-1)(y-\zeta))^2}(\alpha+(\tau-1)y)^{\alpha}e^{-(\alpha+(\tau-1)y)}dy.
	\end{aligned}
\end{equation*}

Take the derivative of $\frac{dD(f_{0,1}||f_{\eta,\sigma})}{d\eta}$ with respect to $\sigma$, we have 
\begin{equation*}
	\begin{aligned}
		&\frac{d^2D(f_{0,1}||f_{\eta,\sigma})}{d\eta d\sigma}=\alpha\frac{\tau(1-\tau)}{\Gamma(\alpha+1,\alpha)}\Big[-\alpha\int_0^\infty\frac{\tau}{\alpha\sigma+\tau(y-\eta)}(\alpha+\tau y)^{\alpha}e^{-(\alpha+\tau y)}dy\\
		&~~~~~~~~~~~~~~~~~~~~~~~~~~-\alpha\int^{0}_\eta\frac{\tau}{\alpha\sigma+\tau(y-\eta)}(\alpha+(\tau-1)y)^{\alpha}e^{-(\alpha+(\tau-1)y)}dy\\
		&~~~~~~~~~~~~~~~~~~~~~~~~~~+\alpha\int_{-\infty}^\eta\frac{1-\tau}{\alpha\sigma+(\tau-1)(y-\eta)}(\alpha+(\tau-1)y)^{\alpha}e^{-(\alpha+(\tau-1)y)}dy\\
		&~~~~~~~~~~~~~~~~~~~~~~~~~~+\frac{1}{\alpha\sigma^2}\int_0^\infty\tau(\alpha+\tau y)^{\alpha}e^{-(\alpha+\tau y)}dy+\frac{1}{\alpha\sigma^2}\int^{0}_\eta\tau(\alpha+(\tau-1)y)^{\alpha}e^{-(\alpha+(\tau-1)y)}dy\\
		&~~~~~~~~~~~~~~~~~~~~~~~~~~-\frac{1}{\alpha\sigma^2}\int_{-\infty}^\eta(1-\tau)(\alpha+(\tau-1)y)^{\alpha}e^{-(\alpha+(\tau-1)y)}dy\Big].
	\end{aligned}
\end{equation*}

Take the derivative of $D(f_{0,1}||f_{\eta,\sigma})$ with respect to $\sigma$, we have 
\begin{equation*}
	\begin{aligned}
		&\frac{dD(f_{0,1}||f_{\eta,\sigma})}{d\sigma}=\alpha\frac{\tau(1-\tau)}{\Gamma(\alpha+1,\alpha)}\Big[\frac{y-\eta}{\sigma}\int_0^\infty\frac{\tau}{\alpha\sigma+\tau(y-\eta)}(\alpha+\tau y)^{\alpha}e^{-(\alpha+\tau y)}dy\\
		&~~~~~~~~~~~~~~~~~~~~~~~~~+\frac{y-\eta}{\sigma}\int^{0}_\eta\frac{\tau}{\alpha\sigma+\tau(y-\eta)}(\alpha+(\tau-1)y)^{\alpha}e^{-(\alpha+(\tau-1)y)}dy\\
		&~~~~~~~~~~~~~~~~~~~~~~~~~-\frac{y-\eta}{\sigma}\int_{-\infty}^\eta\frac{1-\tau}{\alpha\sigma+(\tau-1)(y-\eta)}(\alpha+(\tau-1)y)^{\alpha}e^{-(\alpha+(\tau-1)y)}dy\\
		&~~~~~~~~~~~~~~~~~~~~~~~~~-\frac{1}{\alpha\sigma^2}\int_0^\infty\tau(\alpha+\tau y)^{\alpha}e^{-(\alpha+\tau y)}dy-\frac{1}{\alpha\sigma^2}\int^{0}_\eta\tau(\alpha+(\tau-1)y)^{\alpha}e^{-(\alpha+(\tau-1)y)}dy\\
		&~~~~~~~~~~~~~~~~~~~~~~~~~+\frac{1}{\alpha\sigma^2}\int_{-\infty}^\eta(1-\tau)(\alpha+(\tau-1)y)^{\alpha}e^{-(\alpha+(\tau-1)y)}dy\Big]-\frac{1}{\sigma}.
	\end{aligned}
\end{equation*}

Take the derivative of $\frac{dD(f_{0,1}||f_{\eta,\sigma})}{d\sigma}$ with respect to $\sigma$, we have 
\begin{equation*}
	\begin{aligned}
		&\frac{d^2D(f_{0,1}||f_{\eta,\sigma})}{d\sigma^2}=\alpha\frac{\tau(1-\tau)}{\Gamma(\alpha+1,\alpha)}\Big[-\frac{y-\eta}{\sigma^2}\int_0^\infty\frac{\tau}{\alpha\sigma+\tau(y-\eta)}(\alpha+\tau y)^{\alpha}e^{-(\alpha+\tau y)}dy\\
		&~~~~~~~~~~~~~~~~~~~~~~~~-\frac{\alpha(y-\eta)}{\sigma}\int_0^\infty\frac{\tau}{(\alpha\sigma+\tau(y-\eta))^2}(\alpha+\tau y)^{\alpha}e^{-(\alpha+\tau y)}dy\\
		&~~~~~~~~~~~~~~~~~~~~~~~~-\frac{y-\eta}{\sigma^2}\int^{0}_\eta\frac{\tau}{\alpha\sigma+\tau(y-\eta)}(\alpha+(\tau-1)y)^{\alpha}e^{-(\alpha+(\tau-1)y)}dy\\
		&~~~~~~~~~~~~~~~~~~~~~~~~-\frac{\alpha(y-\eta)}{\sigma}\int^{0}_\eta\frac{\tau}{(\alpha\sigma+\tau(y-\eta))^2}(\alpha+(\tau-1)y)^{\alpha}e^{-(\alpha+(\tau-1)y)}dy\\
		&~~~~~~~~~~~~~~~~~~~~~~~~+\frac{y-\eta}{\sigma^2}\int_{-\infty}^\eta\frac{1-\tau}{\alpha\sigma+(\tau-1)(y-\eta)}(\alpha+(\tau-1)y)^{\alpha}e^{-(\alpha+(\tau-1)y)}dy\\
		&~~~~~~~~~~~~~~~~~~~~~~~~+\frac{\alpha(y-\eta)}{\sigma}\int_{-\infty}^\eta\frac{1-\tau}{(\alpha\sigma+(\tau-1)(y-\eta))^2}(\alpha+(\tau-1)y)^{\alpha}e^{-(\alpha+(\tau-1)y)}dy\\
		&~~~~~~~~~~~~~~~~~~~~~~~~+\frac{2}{\alpha\sigma^3}\int_0^\infty\tau(\alpha+\tau y)^{\alpha}e^{-(\alpha+\tau y)}dy+\frac{2}{\alpha\sigma^3}\int^{0}_\eta\tau(\alpha+(\tau-1)y)^{\alpha}e^{-(\alpha+(\tau-1)y)}dy\\
		&~~~~~~~~~~~~~~~~~~~~~~~~-\frac{2}{\alpha\sigma^3}\int_{-\infty}^\eta(1-\tau)(\alpha+(\tau-1)y)^{\alpha}e^{-(\alpha+(\tau-1)y)}dy\Big]-\frac{1}{\sigma^2}.
	\end{aligned}
\end{equation*}

Since $\frac{d^2D(f_{0,1}||f_{\eta,\sigma})}{d\eta^2}$, $\frac{d^2D(f_{0,1}||f_{\eta,\sigma})}{d\eta d\sigma}$ and $\frac{d^2D(f_{0,1}||f_{\eta,\sigma})}{d\sigma^2}$ are well defined and continuous with respect to $(\eta,\sigma)$ near $(0,1)$, we have $D(f_{0,1}||f_{\eta,\sigma})$ is second differentiable with respect to $(\eta,\sigma)$ at $(0,1)$, which means \Cref{con1} is satisfied.

Next, we verify \Cref{con2}. By \Cref{lm2}, we check the Hellinger differentiability  of the error distribution family. First, $g(y)$ is absolutely continuous, which means \Cref{lm2}$(i)$ holds. Second, for $y\neq 0$, $g(y)$ has almost sure derivatives
\begin{equation*}
\begin{aligned}
\dot{g}(y)
&=\frac{\tau(1-\tau)}{\Gamma(\alpha+1,\alpha)}\big\{\alpha\tau[\alpha+\tau y]^{\alpha-1}\exp[-(\alpha+\tau y)]I(y>0)\\
&~~~~~~~~~~~~~~~~~~~~~~~~-\tau[\alpha+\tau y]^{\alpha}\exp[-(\alpha+\tau y)]I(y>0)\\
&~~~~~~~~~~~~~~~~~~~~~~~~+\alpha(\tau-1)[\alpha+(\tau-1)y]^{\alpha-1}\exp[-(\alpha+(\tau-1)y]I(y<0)\\
&~~~~~~~~~~~~~~~~~~~~~~~~-(\tau-1)[\alpha+(\tau-1)y]^{\alpha}\exp[-(\alpha+(\tau-1)y)]I(y<0)\big\},
\end{aligned}
\end{equation*}
which means \Cref{lm2}$(ii)$ is satisfied.

For $y\neq \eta$, $\dot{\xi}_{(\eta,1)_\eta}^2(y)$ is as following
\begin{equation*}
\begin{aligned}
\dot{\xi}_{(\eta,1)_\eta}^2(y)
&=\frac{\tau(1-\tau)}{\Gamma(\alpha+1,\alpha)}\big\{\frac{\alpha^2\tau^2}{4}[\alpha+\tau (y-\eta)]^{\alpha-2}\exp[-(\alpha+\tau (y-\eta))]I(y>\eta)\\
&~~~~~~~~+\frac{\tau^2}{4}[\alpha+\tau (y-\eta)]^\alpha \exp[-(\alpha+\tau (y-\eta)]I(y>\eta)\\
&~~~~~~~~-\frac{\alpha\tau^2}{2}[\alpha+\tau (y-\eta)]^{\alpha-1} \exp[-(\alpha+\tau (y-\eta))]I(y>\eta)\\
&~~~~~~~~+\frac{\alpha^2(\tau-1)^2}{4}[\alpha+(\tau-1) (y-\eta)]^{\alpha-2}\exp[-(\alpha+(\tau-1)(y-\eta))]I(y< \eta)\\
&~~~~~~~~+\frac{(\tau-1)^2}{4}[\alpha+(\tau-1)(y-\eta)]^\alpha \exp[-(\alpha+(\tau-1)(y-\eta))]I(y< \eta)\\
&~~~~~~~~-\frac{\alpha(\tau-1)^2}{2}[\alpha+(\tau-1)(y-\eta)]^{\alpha-1} \exp[-(\alpha+(\tau-1)(y-\eta))]I(y< \eta)\big\}.
\end{aligned}
\end{equation*}

Through a direct calculation, we can get the upper bound of integral of $\dot{\xi}_{(\eta,1)_\eta}^2(y)$
\begin{equation*}
\begin{aligned}
&\int\dot{\xi}_{(\eta,1)_\eta}^2(y)dy\\
&\le \frac{\tau(1-\tau)}{\Gamma(\alpha+1,\alpha)}\big\{\max\{\frac{\alpha^2\tau^2}{4},\frac{\alpha^2(\tau-1)^2}{4}\}\int_{-\infty}^\infty[\alpha+\rho(y-\eta)]^{\alpha-2}\exp[-(\alpha+\rho(y-\eta))]dy\\
&~~~~~~~~~~~~~~~~~~~~~~~+\max\{\frac{\tau^2}{4},\frac{(\tau-1)^2}{4}\}\int_{-\infty}^\infty[\alpha+\rho(y-\eta)]^\alpha \exp[-(\alpha+\rho(y-\eta)]dy\\
&~~~~~~~~~~~~~~~~~~~~~~~-\min\{\frac{\alpha\tau^2}{2},\frac{\alpha(\tau-1)^2}{2}\}\int_{-\infty}^\infty[\alpha+\rho(y-\eta)]^{\alpha-1} \exp[-(\alpha+\rho(y-\eta))]dy\big\}.\\
\end{aligned}
\end{equation*}

Also, we have 
\begin{equation*}
	\begin{aligned}
		\int_{-\infty}^\infty[\alpha+\rho(y-\eta)]^\alpha \exp[-(\alpha+\rho(y-\eta))]dy=\frac{\Gamma(\alpha+1,\alpha)}{\tau(1-\tau)},
	\end{aligned}
\end{equation*}
\begin{equation*}
	\begin{aligned}
		\int_{-\infty}^\infty[\alpha+\rho(y-\eta)]^{\alpha-1} \exp[-(\alpha+\rho(y-\eta))]dy\le \frac{\Gamma(\alpha+1,\alpha)}{\alpha\tau(1-\tau)},
	\end{aligned}
\end{equation*}
\begin{equation*}
	\begin{aligned}
		\int_{-\infty}^\infty[\alpha+\rho(y-\eta)]^{\alpha-2} \exp[-(\alpha+\rho(y-\eta))]dy\le \frac{\Gamma(\alpha+1,\alpha)}{\alpha^2\tau(1-\tau)}.
	\end{aligned}
\end{equation*}

Therefore, we obtain
\begin{equation*}
\begin{aligned}
\int\dot{\xi}_{(\eta,1)_\eta}^2(y)dy\le \max\{\frac{\tau^2}{2},\frac{(\tau-1)^2}{2}\},
\end{aligned}
\end{equation*} 
which is a constant for given $\alpha$ and $\tau$. Hence, we know that the function $\dot\xi_{(\eta,1)_\eta}(y):=-\frac{1}{2}\dot g(y-\eta)I(g(y-\eta)>0)/\sqrt{g(y-\eta)}$ is square integrable and $\mathbb{E}\dot{\xi}_{(\eta,1)_\eta}^2(y)\to \mathbb{E}\dot{\xi}_{(0,1)_\eta}^2(y)$ as $\eta\to 0$, which implies \Cref{lm2}$(iii)$. Moreover, since
\begin{equation*}
\begin{aligned}
\int y^2\dot{\xi}^2_{(0,1)_\eta}(y)dy
&=\frac{\tau(1-\tau)}{\Gamma(\alpha+1,\alpha)}\int y^2\big\{\frac{\alpha^2\tau^2}{4}(\alpha+\tau y)^{\alpha-2}\exp[-(\alpha+\tau y)]I(y>0)\\
&~~~~~~~+\frac{\tau^2}{4}(\alpha+\tau y)^\alpha \exp[-(\alpha+\tau y)]I(y>0)\\
&~~~~~~~-\frac{\alpha\tau^2}{2}(\alpha+\tau y)^{\alpha-1} \exp[-(\alpha+\tau y)]I(y>0)\\
&~~~~~~~+\frac{\alpha^2(\tau-1)^2}{4}[\alpha+(\tau-1)y]^{\alpha-2}\exp[-(\alpha+(\tau-1)y)]I(y<0)\\
&~~~~~~~+\frac{(\tau-1)^2}{4}[\alpha+(\tau-1)y]^\alpha \exp[-(\alpha+(\tau-1)y)]I(y<0)\\
&~~~~~~~-\frac{\alpha(\tau-1)^2}{2}[\alpha+(\tau-1)y]^{\alpha-1} \exp[-(\alpha+(\tau-1)y)]I(y<0)\big\}dy<\infty,
\end{aligned}
\end{equation*} 
we have that \Cref{lm2}$(iv)$ holds. Finally, for each $y$, by Taylor expansion of the function $s_\sigma(y)=\dot\xi_{(0,\sigma)_\eta}(y)$ with respect to $\sigma$ at 1, we have
\begin{equation*}
	s_\sigma(y) = s_{\sigma=1}(y)+(\sigma-1)\dot s_{\sigma=1}(y)+o(\lvert \sigma-1\rvert),
\end{equation*}
where $\dot s_{\sigma=1}(y)$ is square integrable, we have 
\begin{equation*}
	\begin{aligned}
		\lVert \dot\xi_{(0,\sigma)_\eta}(y)-\dot\xi_{(0,1)_\eta}(y)\lVert_2=\lVert (\sigma-1)\dot s_{\sigma=1}(y)+o(\lvert \sigma-1\rvert)\lVert_2=O(\lvert \sigma-1\rvert).
	\end{aligned}
\end{equation*}

Hence, \Cref{lm2}$(v)$ is satisfied and we obtain that the location-scale family $\{\frac{1}{\sigma}f(\frac{x-\eta}{\sigma}),\eta\in\mathbb{R},\sigma\in(0,+\infty)\}$ is Hellinger differentiable with respect to $(\eta,\sigma)$ at $(0,1)$. Therefore, \Cref{con2} is satisfied.

\end{proof}

\subsubsection{Proof of \Cref{cor1} with CNL error distribution}
\begin{proof}
Our location-scale family has density
\begin{equation*}
	f_{\eta,\sigma}(y)=\frac{1-\tau}{\beta\sigma}\exp[-\frac{1}{\beta\sigma}(y-\eta)]I(y> \eta)+\frac{2\tau}{\sqrt{2\pi}\sigma}\exp[-\frac{(y-\eta)^2}{2\sigma^2}]I(y\le \eta),-\infty<y<\infty.
\end{equation*}
We first show \Cref{con1} holds. If $\eta>0$, we have
\begin{equation*}
	\begin{aligned}
		D(f_{0,1}||f_{\eta,\sigma})&=\mathbb{E}\log \frac{f_{0,1}}{f_{\eta,\sigma}}\\
		&=\mathbb{E}\log \frac{\frac{1-\tau}{\beta}e^{-\frac{1}{\beta}y}I(y>0)+\frac{2\tau}{\sqrt{2\pi}}e^{-\frac{y^2}{2}}I(y\le 0)}{\frac{1-\tau}{\beta\sigma}e^{-\frac{1}{\beta\sigma}(y-\eta)}I(y>\eta)+\frac{2\tau}{\sqrt{2\pi}\sigma}e^{-\frac{(y-\eta)^2}{2\sigma^2}}I(y\le \eta)}\\
		&=\frac{1-\tau}{\beta}\int_{\eta}^\infty (\frac{1}{\beta\sigma}(y-\eta)-\frac{1}{\beta}y+\log\sigma)e^{-\frac{1}{\beta}y}dy\\
		&~~~~+\frac{1-\tau}{\beta}\int_0^\eta[\log((1-\tau)\frac{\sqrt{2\pi}\sigma}{2\beta \tau})-\frac{1}{\beta}y+\frac{(y-\eta)^2}{2\sigma^2}]e^{-\frac{1}{\beta}y}dy\\
		&~~~~+\frac{2\tau}{\sqrt{2\pi}}\int_{-\infty}^0(\log\sigma-\frac{y^2}{2}+\frac{(y-\eta)^2}{2\sigma^2})e^{-\frac{y^2}{2}}dy.
	\end{aligned}
\end{equation*}

By Leibniz rule, we have
\begin{equation*}
	\begin{aligned}
		\frac{dD(f_{0,1}||f_{\eta,\sigma})}{d\eta}&=-\frac{1-\tau}{\beta\sigma} e^{-\frac{1}{\beta}\eta}+\frac{1-\tau}{\beta}\log(\frac{(1-\tau)\sqrt{2\pi}}{2\beta \tau})e^{-\frac{1}{\beta}\eta}\\
		&~~~~~+\frac{1-\tau}{\beta\sigma^2}\int_0^\eta(\eta-y)e^{-\frac{1}{\beta}y}dy+\frac{2\tau}{\sqrt{2\pi}\sigma^2}\int_{-\infty}^0(\eta-y)e^{-\frac{y^2}{2}}dy,
	\end{aligned}
\end{equation*}
\begin{equation*}
	\begin{aligned}
		\frac{d^2D(f_0||f_\eta)}{d\eta^2}&=\frac{1-\tau}{\beta^2\sigma}e^{-\frac{1}{\beta}\eta}-\frac{1-\tau}{\beta^2}\log(\frac{(1-\tau)\sqrt{2\pi}}{2\beta \tau})e^{-\frac{1}{\beta}\eta}\\
		&~~~~+\frac{1-\tau}{\beta\sigma^2}\int_0^\eta e^{-\frac{1}{\beta}y}dy+\frac{2\tau}{\sqrt{2\pi}\sigma^2}\int_{-\infty}^0 e^{-\frac{y^2}{2}}dy,
	\end{aligned}
\end{equation*}

\begin{equation*}
	\begin{aligned}
		\frac{d^2D(f_{0,1}||f_{\eta,\sigma})}{d\eta d\sigma}&=\frac{1-\tau}{\beta\sigma^2} e^{-\frac{1}{\beta}\eta}-\frac{2(1-\tau)}{\beta\sigma^3}\int_0^\eta(\eta-y)e^{-\frac{1}{\beta}y}dy-\frac{4\tau}{\sqrt{2\pi}\sigma^3}\int_{-\infty}^0(\eta-y)e^{-\frac{y^2}{2}}dy,
	\end{aligned}
\end{equation*}

\begin{equation*}
	\begin{aligned}
		\frac{dD(f_{0,1}||f_{\eta,\sigma})}{d\sigma}&=\frac{1-\tau}{\beta}\int_\eta^\infty[\frac{1}{\sigma}-\frac{1}{\beta\sigma^2}(y-\eta)]e^{-\frac{1}{\beta}y}dy\\
		&+\frac{1-\tau}{\beta}\int_0^\eta[\frac{1}{\sigma}-\frac{(y-\eta)^2}{\sigma^3}]e^{-\frac{1}{\beta}y}dy\\
		&+\frac{2\tau}{\sqrt{2\pi}}\int_{-\infty}^0[\frac{1}{\sigma}-\frac{(y-\eta)^2}{\sigma^3}]e^{-\frac{y^2}{2}}dy,
	\end{aligned}
\end{equation*}
and
\begin{equation*}
	\begin{aligned}
		\frac{d^2D(f_{0,1}||f_{\eta,\sigma})}{d\sigma^2}&=\frac{1-\tau}{\beta}\int_\eta^\infty[-\frac{1}{\sigma^2}+\frac{2}{\beta\sigma^3}(y-\eta)]e^{-\frac{1}{\beta}y}dy\\
		&+\frac{1-\tau}{\beta}\int_0^\eta[-\frac{1}{\sigma^2}+\frac{3(y-\eta)^2}{\sigma^4}]e^{-\frac{1}{\beta}y}dy\\
		&+\frac{2\tau}{\sqrt{2\pi}}\int_{-\infty}^0[-\frac{1}{\sigma^2}+\frac{3(y-\eta)^2}{\sigma^4}]e^{-\frac{y^2}{2}}dy.
	\end{aligned}
\end{equation*}

Hence, if $\eta>0$, take the second derivative with respect to $(\eta,\sigma)$ at $(0,1)$ and the Hessian matrix at $(\eta,\sigma)=(0,1)$ is
 \[
  \left( {\begin{array}{cc}
   \frac{1-\tau}{\beta^2}-\frac{1-\tau}{\beta^2}\log\frac{(1-\tau)\sqrt{2\pi}}{2\beta \tau}+\tau & \frac{1-\tau}{\beta}+\frac{4\tau}{\sqrt{2\pi}} \\
   \frac{1-\tau}{\beta}+\frac{4\tau}{\sqrt{2\pi}} & 1+\tau \\
  \end{array} } \right)
\]

If $\eta<0$, we have
\begin{equation*}
	\begin{aligned}
		D(f_{0,1}||f_{\eta,\sigma})&=\mathbb{E}\log \frac{f_{0,1}}{f_{\eta,\sigma}}\\
		&=\mathbb{E}\log \frac{\frac{1-\tau}{\beta}e^{-\frac{1}{\beta}y}I(y>0)+\frac{2\tau}{\sqrt{2\pi}}e^{-\frac{y^2}{2}}I(y\le 0)}{\frac{1-\tau}{\beta\sigma}e^{-\frac{1}{\beta\sigma}(y-\eta)}I(y>\eta)+\frac{2\tau}{\sqrt{2\pi}\sigma}e^{-\frac{(y-\eta)^2}{2\sigma^2}}I(y\le \eta)}\\
		&=\frac{1-\tau}{\beta}\int_{0}^\infty [\log\sigma-\frac{1}{\beta}y+\frac{1}{\beta\sigma}(y-\eta)] e^{-\frac{1}{\beta}y}dy\\
		&~~~~+\frac{2\tau}{\sqrt{2\pi}}\int_\eta^0[\log(\frac{2\tau\beta\sigma}{(1-\tau)\sqrt{2\pi}})-\frac{y^2}{2}+\frac{1}{\beta\sigma}(y-\eta)]e^{-\frac{y^2}{2}}dy\\
		&~~~~+\frac{2\tau}{\sqrt{2\pi}}\int_{-\infty}^\eta[\log\sigma-\frac{y^2}{2}+\frac{(y-\eta)^2}{2\sigma^2}]e^{-\frac{y^2}{2}}dy.
	\end{aligned}
\end{equation*}

By Leibniz rule, we have
\begin{equation*}
	\begin{aligned}
		\frac{dD(f_{0,1}||f_{\eta,\sigma})}{d\eta}&=-\frac{1-\tau}{\beta\sigma}-\log(\frac{2\tau\beta}{(1-\tau)\sqrt{2\pi}})\frac{2\tau}{\sqrt{2\pi}}e^{-\frac{\eta^2}{2}}-\frac{2\tau}{\sqrt{2\pi}\beta\sigma}\int_{\eta}^0e^{-\frac{y^2}{2}}dy\\
		&~~~~~+\frac{2\tau}{\sqrt{2\pi}\sigma^2}\int_{-\infty}^\eta(\eta-y)e^{-\frac{y^2}{2}}dy,
	\end{aligned}
\end{equation*}
\begin{equation*}
	\begin{aligned}
		\frac{d^2D(f_{0,1}||f_{\eta,\sigma})}{d\eta^2}&=\log(\frac{2\tau\beta}{(1-\tau)\sqrt{2\pi}})\frac{2\tau\eta}{\sqrt{2\pi}}e^{-\frac{\eta^2}{2}}+\frac{2\tau}{\sqrt{2\pi}\beta\sigma}e^{-\frac{\eta^2}{2}}\\
		&~~~~~+\frac{2\tau}{\sqrt{2\pi}\sigma^2}\int_{-\infty}^\eta e^{-\frac{y^2}{2}}dy,
	\end{aligned}
\end{equation*}

\begin{equation*}
	\begin{aligned}
		\frac{d^2D(f_{0,1}||f_{\eta,\sigma})}{d\eta d\sigma}&=\frac{1-\tau}{\beta\sigma^2}+\frac{2\tau}{\sqrt{2\pi}\beta\sigma^2}\int_{\eta}^0e^{-\frac{y^2}{2}}dy\\
		&~~~~~-\frac{4\tau}{\sqrt{2\pi}\sigma^3}\int_{-\infty}^\eta(\eta-y)e^{-\frac{y^2}{2}}dy,
	\end{aligned}
\end{equation*}

\begin{equation*}
	\begin{aligned}
		\frac{dD(f_{0,1}||f_{\eta,\sigma})}{d\sigma}&=\frac{1-\tau}{\beta}\int_{0}^\infty [\frac{1}{\sigma}-\frac{1}{\beta\sigma^2}(y-\eta)] e^{-\frac{1}{\beta}y}dy\\
		&~~~~+\frac{2\tau}{\sqrt{2\pi}}\int_\eta^0[\frac{1}{\sigma}-\frac{1}{\beta\sigma^2}(y-\eta)]e^{-\frac{y^2}{2}}dy\\
		&~~~~+\frac{2\tau}{\sqrt{2\pi}}\int_{-\infty}^\eta[\frac{1}{\sigma}-\frac{(y-\eta)^2}{\sigma^3}]e^{-\frac{y^2}{2}}dy,
	\end{aligned}
\end{equation*}
and
\begin{equation*}
	\begin{aligned}
		\frac{d^2D(f_{0,1}||f_{\eta,\sigma})}{d\sigma^2}&=\frac{1-\tau}{\beta}\int_{0}^\infty [-\frac{1}{\sigma^2}+\frac{2}{\beta\sigma^3}(y-\eta)] e^{-\frac{1}{\beta}y}dy\\
		&~~~~+\frac{2\tau}{\sqrt{2\pi}}\int_\eta^0[-\frac{1}{\sigma^2}+\frac{2}{\beta\sigma^3}(y-\eta)]e^{-\frac{y^2}{2}}dy\\
		&~~~~+\frac{2\tau}{\sqrt{2\pi}}\int_{-\infty}^\eta[-\frac{1}{\sigma^2}+\frac{3(y-\eta)^2}{\sigma^4}]e^{-\frac{y^2}{2}}dy.
	\end{aligned}
\end{equation*}

Hence, if $\eta<0$, take the second derivative with respect to $(\eta,\sigma)$ at $(0,1)$ and the Hessian matrix at $(\eta,\sigma)=(0,1)$ is
 \[
  \left( {\begin{array}{cc}
   \frac{2\tau}{\sqrt{2\pi}\beta}+\tau & \frac{1-\tau}{\beta}+\frac{4\tau}{\sqrt{2\pi}} \\
    \frac{1-\tau}{\beta} +\frac{4\tau}{\sqrt{2\pi}} & 1+\tau \\
  \end{array} } \right)
\]

If $\frac{2\tau}{\sqrt{2\pi}}=\frac{1-\tau}{\beta}$, we have the Hessian matrix for $\eta>0$ and $\eta<0$ are equivalent. Also, since $\frac{d^2D(f_{0,1}||f_{\eta,\sigma})}{d\eta^2}$, $\frac{d^2D(f_{0,1}||f_{\eta,\sigma})}{d\eta d\sigma}$ and $\frac{d^2D(f_{0,1}||f_{\eta,\sigma})}{d\sigma^2}$ are well defined and continuous with respect to $(\eta,\sigma)$ near $(0,1)$, we have $D(f_{0,1}||f_{\eta,\sigma})$ is twice differentiable with respect to $(\eta,\sigma)$ at $(0,1)$. Hence, \Cref{con1} is satisfied.

Next, we verify \Cref{con2}. By \Cref{lm2}, we check the Hellinger differentiability  of the error distribution. First, $g(y)$ is absolutely continuous, which implies \Cref{lm2}$(i)$ Second, for $y\neq 0$, $g(y)$ has almost sure derivatives
\begin{equation*}
	\dot g(y)=-\frac{1-\tau}{\beta^2}\exp[-\frac{1}{\beta}yI(y>0)]-\frac{2\tau y}{\sqrt{2\pi}}\exp[-\frac{y^2}{2}I(y<0)],
\end{equation*}
which means \Cref{lm2}$(ii)$ holds.

Moreover, for $y\neq \eta$, $\dot{\xi}_{(\eta,1)_\eta}^2(y)$ is
\begin{equation*}
\begin{aligned}
\dot{\xi}_{(\eta,1)_\eta}^2(y)
&=\frac{1-\tau}{4\beta^3}\exp[-\frac{1}{\beta}(y-\eta)I(y>\eta)]+\frac{\tau(y-\eta)^2}{2\sqrt{2\pi}}\exp[-\frac{(y-\eta)^2}{2}I(y<\eta)],
\end{aligned}
\end{equation*}
and $\int \dot{\xi}_{(\eta,1)_\eta}^2(y)dy=\frac{1-\tau}{4\beta^2}+\frac{\tau}{4}$, we have that the function $\dot\xi_{(\eta,1)_\eta}(y):=-\frac{1}{2}\dot g(y-\eta)I(g(y-\eta)>0)/\sqrt{g(y-\eta)}$ is squared integrable and $\mathbb{E}\dot{\xi}_{(\eta,1)_\eta}^2(y)\to \mathbb{E}\dot{\xi}_{(0,1)_\eta}^2(y)$ as $\eta\to 0$, which means \Cref{lm2}$(iii)$ is satisfied. Moreover, since
\begin{equation*}
	\begin{aligned}
		\int y^2\dot\xi_{(0,1)_\eta}^2(y)dy=\frac{1-\tau}{4\beta^3}\int y^2\exp[-\frac{1}{\beta}yI(y>0)]dy+\frac{\tau}{2\sqrt{2\pi}}\int y^4\exp[-\frac{y^2}{2}I(y<\eta)]dy<\infty,
	\end{aligned}
\end{equation*}
we have \Cref{lm2}$(iv)$ holds. Finally, for each $y$, by Taylor expansion of the function $s_\sigma(y)=\dot\xi_{(0,\sigma)_\eta}(y)$ with respect to $\sigma$ at 1, we have
\begin{equation*}
	s_\sigma(y) = s_{\sigma=1}(y)+(\sigma-1)\dot s_{\sigma=1}(y)+o(\lvert \sigma-1\rvert),
\end{equation*}
where $\dot s_{\sigma=1}(y)$ is square integrable, we obtain 
\begin{equation*}
	\begin{aligned}
		\lVert \dot\xi_{(0,\sigma)_\eta}(y)-\dot\xi_{(0,1)_\eta}(y)\lVert_2=\lVert (\sigma-1)\dot s_{\sigma=1}(y)+o(\lvert \sigma-1\rvert)\lVert_2=O(\lvert \sigma-1\rvert).
	\end{aligned}
\end{equation*}

Hence, \Cref{lm2}$(v)$ is satisfied and the location-scale family $\{\frac{1}{\sigma}f(\frac{x-\eta}{\sigma}),\eta\in \mathbb{R},\sigma\in (0,+\infty)\}$ is Hellinger differentiable with respect to $(\eta,\sigma)$ at $(0,1)$. Therefore, \Cref{con2} is satisfied.
\end{proof}

\subsubsection{Proof of \Cref{cor1} with Cauchy error distribution}
\begin{proof}
	We first show \Cref{con1} holds by showing $D(f_{0,1}||f_{\eta,\sigma})$ is twice differentiable with respect to $(\eta,\sigma)$ at $(0,1)$. By \cite{Chyzak}, we have
	\begin{equation*}
		D(f_{0,1}||f_{\eta,\sigma})=\log\Big[\frac{(1+\sigma)^2+\eta^2}{4\sigma}\Big].
	\end{equation*}
	It is seen that the K-L divergence $D(f_{0,1}||f_{\eta,\sigma})$ is twice differentiable with respect to $(\eta,\sigma)$ at $(0,1)$. Hence, we have that \Cref{con1} is satisfied.\\
	Next, by \Cref{lm2}, we verify \Cref{con2}. It is obvious that Cauchy density is absolutely continuous and differentiable, hence \Cref{lm2}$(i)$ and \Cref{lm2}$(ii)$ hold. Also, since 
	\begin{equation*}
		\int \dot\xi_{(\eta,1)_\eta}^2(y)dy=\frac{1}{\pi}\int \frac{y^2}{(1+y^2)^3}dy=\frac{1}{\pi}[(\frac{1}{8}\arctan y +\frac{y^3-y}{8(y^2+1)^2})|^{\infty}_{-\infty}]=\frac{1}{8},
	\end{equation*}
	and 
	\begin{equation*}
		\int y^2\dot\xi_{(\eta,1)_\eta}^2(y)dy=\frac{1}{\pi^2}\int \frac{y^4}{(1+y^2)^3}dy=\frac{1}{\pi^2}[(\frac{3}{8}\arctan y -\frac{5y^3+3y}{8(y^2+1)^2})|^{\infty}_{-\infty}]=\frac{3}{8\pi},
	\end{equation*}
	we have that \Cref{lm2}$(iii)$ and \Cref{lm2}$(iv)$ are satisfied. Finally, for each $y$, by Taylor expansion of the function $s_{\sigma}(y)=\dot \xi_{(0,\sigma)_\eta}(y)$ at $\sigma=1$, we have
	\begin{equation*}
		s_\sigma(y)=s_{\sigma=1}(y)+(\sigma-1)\dot s_{\sigma=1}(y)+o(\lvert \sigma-1\rvert),
	\end{equation*}
	where $\dot s_{\sigma=1}(y)$ is square integrable, we have
	\begin{equation*}
		\lVert \dot \xi_{(0,\sigma)_\eta}(y)- \dot \xi_{(0,1)_\eta}(y)\rVert_2=\lVert (\sigma-1)\dot s_{\sigma=1}(y)+o(\lvert \sigma-1\rvert)\rVert=O(\lvert \sigma-1\rvert),
	\end{equation*}
	which means \Cref{lm2}$(v)$ holds. Therefore, we have verified \Cref{con2}.
\end{proof}

\subsubsection{Proof of \Cref{cor2}}
\begin{proof}
	Our location-scale family has density
	\begin{equation*}
		f_{\mu,\sigma}(y)=\frac{2}{\sqrt{\pi}\sigma}\frac{\sqrt{\phi(1-\phi)}}{\sqrt{\phi}+\sqrt{1-\phi}}\exp[-\frac{\phi}{\sigma^2}(y-\eta)^2I(y\ge \eta)-\frac{1-\phi}{\sigma^2}(y-\eta)^2I(y<\eta)], -\infty<y<\infty,
	\end{equation*}
	
	We first show \Cref{con1} holds. If $\eta > 0$, we have
	\begin{equation*}
		\begin{aligned}
			D(f_{0,1}||f_{\mu,\sigma})&=\mathbb{E}\log\frac{f_{0,1}}{f_{\mu,\sigma}}\\
			&=\mathbb{E}\log\frac{\sigma \exp [-\phi y^2I(y\ge 0)-(1-\phi)y^2I(y<0)]}{\exp [-\frac{\phi}{\sigma^2}(y-\eta)^2I(y\ge \eta)-\frac{1-\phi}{\sigma^2}(y-\eta)^2I(y< \eta)]}\\
			&=\log\sigma +\frac{2}{\sqrt{\pi}}\frac{\sqrt{\phi(1-\phi)}}{\sqrt{\phi}+\sqrt{1-\phi}}\{\int_\eta^\infty [\frac{\phi}{\sigma^2}(y-\eta)^2-\phi y^2]e^{-\phi y^2}dy\\
			&~~~~~+\int_0^\eta [\frac{1-\phi}{\sigma^2}(y-\eta)^2-\phi y^2]e^{-\phi y^2}dy\\
			&~~~~~+\int_{-\infty}^0 [\frac{1-\phi}{\sigma^2}(y-\eta)^2-(1-\phi) y^2]e^{-(1-\phi) y^2}dy\}.
		\end{aligned}
	\end{equation*}

	By Leibniz rule, we have
	\begin{equation*}
		\begin{aligned}
			\frac{dD(f_{0,1}||f_{\mu,\sigma})}{d\eta}&= \frac{2}{\sqrt{\pi}}\frac{\sqrt{\phi(1-\phi)}}{\sqrt{\phi}+\sqrt{1-\phi}}\{\int_\eta^\infty [-\frac{2\phi}{\sigma^2}(y-\eta)]e^{-\phi y^2}dy\\
			&~~~~~+\int^\eta_0 [-\frac{2(1-\phi)}{\sigma^2}(y-\eta)]e^{-\phi y^2}dy+\int_{-\infty}^0 [-\frac{2(1-\phi)}{\sigma^2}(y-\eta)]e^{-(1-\phi) y^2}dy\},
		\end{aligned}
	\end{equation*}
	\begin{equation*}
		\begin{aligned}
			\frac{d^2D(f_{0,1}||f_{\mu,\sigma})}{d\eta^2}&= \frac{2}{\sqrt{\pi}}\frac{\sqrt{\phi(1-\phi)}}{\sqrt{\phi}+\sqrt{1-\phi}}\{\int_\eta^\infty \frac{2\phi}{\sigma^2}e^{-\phi y^2}dy\\
			&~~~~~+\int^\eta_0 \frac{2(1-\phi)}{\sigma^2}e^{-\phi y^2}dy+\int_{-\infty}^0 \frac{2(1-\phi)}{\sigma^2}e^{-(1-\phi) y^2}dy\},
		\end{aligned}
	\end{equation*}
	\begin{equation*}
		\begin{aligned}
			\frac{d^2D(f_{0,1}||f_{\mu,\sigma})}{d\eta d\sigma}&= \frac{2}{\sqrt{\pi}}\frac{\sqrt{\phi(1-\phi)}}{\sqrt{\phi}+\sqrt{1-\phi}}\{\int_\eta^\infty [\frac{4\phi}{\sigma^3}(y-\eta)]e^{-\phi y^2}dy\\
			&~~~~~+\int^\eta_0 [\frac{4(1-\phi)}{\sigma^3}(y-\eta)]e^{-\phi y^2}dy+\int_{-\infty}^0 [\frac{4(1-\phi)}{\sigma^3}(y-\eta)]e^{-(1-\phi) y^2}dy\},
		\end{aligned}
	\end{equation*}
	\begin{equation*}
		\begin{aligned}
			\frac{dD(f_{0,1}||f_{\mu,\sigma})}{d\sigma}&=\frac{1}{\sigma}+\frac{2}{\sqrt{\pi}}\frac{\sqrt{\phi(1-\phi)}}{\sqrt{\phi}+\sqrt{1-\phi}}\{\int_\eta^\infty [-\frac{2\phi}{\sigma^3}(y-\eta)^2]e^{-\phi y^2}dy\\
			&~~~~~+\int^\eta_0 [-\frac{2(1-\phi)}{\sigma^3}(y-\eta)^2]e^{-\phi y^2}dy+\int_{-\infty}^0 [-\frac{2(1-\phi)}{\sigma^3}(y-\eta)^2]e^{-(1-\phi) y^2}dy\},
		\end{aligned}
	\end{equation*}
	and
	\begin{equation*}
		\begin{aligned}
			\frac{d^2D(f_{0,1}||f_{\mu,\sigma})}{d\sigma^2}&=-\frac{1}{\sigma^2}+\frac{2}{\sqrt{\pi}}\frac{\sqrt{\phi(1-\phi)}}{\sqrt{\phi}+\sqrt{1-\phi}}\{\int_\eta^\infty [\frac{6\phi}{\sigma^4}(y-\eta)^2]e^{-\phi y^2}dy\\
			&~~~~~+\int_0^\eta [\frac{6(1-\phi)}{\sigma^4}(y-\eta)^2]e^{-\phi y^2}dy+\int_{-\infty}^0 [\frac{6(1-\phi)}{\sigma^4}(y-\eta)^2]e^{-(1-\phi) y^2}dy\}.
		\end{aligned}
	\end{equation*}

	Hence, if $\eta>0$, take the second derivative with respect to $(\eta,\sigma)$ at (0,1) and we get the Hessian matrix at $(\eta,\sigma)=(0,1)$ is 
  \[
  \left( {\begin{array}{cc}
    2\sqrt{\phi(1-\phi)} & \frac{4}{\sqrt{\pi}}\frac{\sqrt{\phi(1-\phi)}}{\sqrt{\phi}+\sqrt{1-\phi}} \\
    \frac{4}{\sqrt{\pi}}\frac{\sqrt{\phi(1-\phi)}}{\sqrt{\phi}+\sqrt{1-\phi}} & 2 \\
  \end{array} } \right)
  \]

   If $\eta<0$, we have
   \begin{equation*}
		\begin{aligned}
			D(f_{0,1}||f_{\mu,\sigma})&=\mathbb{E}\log\frac{f_{0,1}}{f_{\mu,\sigma}}\\
			&=\mathbb{E}\log\frac{\sigma \exp [-\phi y^2I(y\ge 0)-(1-\phi)y^2I(y<0)]}{\exp [-\frac{\phi}{\sigma^2}(y-\eta)^2I(y\ge \eta)-\frac{1-\phi}{\sigma^2}(y-\eta)^2I(y< \eta)]}\\
			&=\log\sigma +\frac{2}{\sqrt{\pi}}\frac{\sqrt{\phi(1-\phi)}}{\sqrt{\phi}+\sqrt{1-\phi}}\{\int_0^\infty [\frac{\phi}{\sigma^2}(y-\eta)^2-\phi y^2]e^{-\phi y^2}dy\\
			&~~~~~+\int^0_\eta [\frac{\phi}{\sigma^2}(y-\eta)^2-(1-\phi) y^2]e^{-(1-\phi) y^2}dy\\
			&~~~~~+\int_{-\infty}^\eta [\frac{1-\phi}{\sigma^2}(y-\eta)^2-(1-\phi) y^2]e^{-(1-\phi) y^2}dy\}.
		\end{aligned}
	\end{equation*}

	By Leibniz rule, we have
	\begin{equation*}
		\begin{aligned}
			\frac{dD(f_{0,1}||f_{\mu,\sigma})}{d\eta}&= \frac{2}{\sqrt{\pi}}\frac{\sqrt{\phi(1-\phi)}}{\sqrt{\phi}+\sqrt{1-\phi}}\{\int_0^\infty [-\frac{2\phi}{\sigma^2}(y-\eta)]e^{-\phi y^2}dy\\
			&~~~~~+\int_\eta^0 [-\frac{2\phi}{\sigma^2}(y-\eta)]e^{-(1-\phi) y^2}dy+\int_{-\infty}^\eta [-\frac{2(1-\phi)}{\sigma^2}(y-\eta)]e^{-(1-\phi) y^2}dy\},
		\end{aligned}
	\end{equation*}
	\begin{equation*}
		\begin{aligned}
			\frac{d^2D(f_{0,1}||f_{\mu,\sigma})}{d\eta^2}&= \frac{2}{\sqrt{\pi}}\frac{\sqrt{\phi(1-\phi)}}{\sqrt{\phi}+\sqrt{1-\phi}}\{\int_0^\infty \frac{2\phi}{\sigma^2}e^{-\phi y^2}dy\\
			&~~~~~+\int_\eta^0 \frac{2\phi}{\sigma^2}e^{-(1-\phi) y^2}dy+\int_{-\infty}^\eta \frac{2(1-\phi)}{\sigma^2}e^{-(1-\phi) y^2}dy\},
		\end{aligned}
	\end{equation*}
	\begin{equation*}
		\begin{aligned}
			\frac{d^2D(f_{0,1}||f_{\mu,\sigma})}{d\eta d\sigma}&= \frac{2}{\sqrt{\pi}}\frac{\sqrt{\phi(1-\phi)}}{\sqrt{\phi}+\sqrt{1-\phi}}\{\int_0^\infty [\frac{4\phi}{\sigma^3}(y-\eta)]e^{-\phi y^2}dy\\
			&~~~~~+\int_\eta^0 [\frac{4\phi}{\sigma^3}(y-\eta)]e^{-(1-\phi) y^2}dy+\int_{-\infty}^\eta [\frac{4(1-\phi)}{\sigma^3}(y-\eta)]e^{-(1-\phi) y^2}dy\},
		\end{aligned}
	\end{equation*}
	\begin{equation*}
		\begin{aligned}
			\frac{dD(f_{0,1}||f_{\mu,\sigma})}{d\sigma}&=\frac{1}{\sigma}+\frac{2}{\sqrt{\pi}}\frac{\sqrt{\phi(1-\phi)}}{\sqrt{\phi}+\sqrt{1-\phi}}\{\int_0^\infty [-\frac{2\phi}{\sigma^3}(y-\eta)^2]e^{-\phi y^2}dy\\
			&~~~~~+\int_\eta^0 [-\frac{2\phi}{\sigma^3}(y-\eta)^2]e^{-(1-\phi) y^2}dy+\int_{-\infty}^\eta [-\frac{2(1-\phi)}{\sigma^3}(y-\eta)^2]e^{-(1-\phi) y^2}dy\},
		\end{aligned}
	\end{equation*}
	and
	\begin{equation*}
		\begin{aligned}
			\frac{d^2D(f_{0,1}||f_{\mu,\sigma})}{d\sigma^2}&=-\frac{1}{\sigma^2}+\frac{2}{\sqrt{\pi}}\frac{\sqrt{\phi(1-\phi)}}{\sqrt{\phi}+\sqrt{1-\phi}}\{\int_0^\infty [\frac{6\phi}{\sigma^4}(y-\eta)^2]e^{-\phi y^2}dy\\
			&~~~~~+\int_\eta^0 [\frac{6\phi}{\sigma^4}(y-\eta)^2]e^{-(1-\phi) y^2}dy+\int_{-\infty}^\eta [\frac{6(1-\phi)}{\sigma^4}(y-\eta)^2]e^{-(1-\phi) y^2}dy\}.
		\end{aligned}
	\end{equation*}

	Hence, if $\eta>0$, take the second derivative with respect to $(\eta,\sigma)$ at (0,1) and we get the Hessian matrix at $(\eta,\sigma)=(0,1)$ is 
  \[
  \left( {\begin{array}{cc}
    2\sqrt{\phi(1-\phi)} & \frac{4}{\sqrt{\pi}}\frac{\sqrt{\phi(1-\phi)}}{\sqrt{\phi}+\sqrt{1-\phi}} \\
    \frac{4}{\sqrt{\pi}}\frac{\sqrt{\phi(1-\phi)}}{\sqrt{\phi}+\sqrt{1-\phi}} & 2 \\
  \end{array} } \right)
  \]
  
We have the Hessian matrix for $\eta>0$ and $\eta<0$ are equivalent. Also, since $\frac{d^2D(f_{0,1}||f_{\eta,\sigma})}{d\eta^2}$, $\frac{d^2D(f_{0,1}||f_{\eta,\sigma})}{d\eta d\sigma}$ and $\frac{d^2D(f_{0,1}||f_{\eta,\sigma})}{d\sigma^2}$ are well defined and continuous with respect to $(\eta,\sigma)$ near $(0,1)$, we have $D(f_{0,1}||f_{\eta,\sigma})$ is twice differentiable with respect to $(\eta,\sigma)$ at $(0,1)$. Hence, \Cref{con1} is satisfied.

Next, we verify \Cref{con2}. By \Cref{lm2}, we check the Hellinger differentiability  of the error distribution. First, $g(y)$ is absolutely continuous, which means \Cref{lm2}$(i)$ is satisfied. Second, $g(y)$ has almost sure derivatives
\begin{equation*}
	\dot g(y)=-\frac{4}{\sqrt{\pi}}\frac{\sqrt{\phi(1-\phi)}}{\sqrt{\phi}+\sqrt{1-\phi}}[\phi ye^{-\phi y^2}I(y\ge 0)+(1-\phi)ye^{-(1-\phi)y^2}I(y<0)],
\end{equation*}
which means \Cref{lm2}$(ii)$ holds.

Moreover, for $y\neq \eta$, $\dot{\xi}_{(\eta,1)_\eta}^2(y)$ is
\begin{equation*}
\begin{aligned}
\dot{\xi}_{(\eta,1)_\eta}^2(y)
=\frac{2}{\sqrt{\pi}}\frac{\sqrt{\phi(1-\phi)}}{\sqrt{\phi}+\sqrt{1-\phi}}[\phi (y-\eta)e^{-\phi (y-\eta)^2}I(y\ge \eta)+(1-\phi)(y-\eta)e^{-(1-\phi)(y-\eta)^2}I(y<\eta)],
\end{aligned}
\end{equation*}
and $\int \dot{\xi}_{(\eta,1)_\eta}^2(y)dy=\frac{2}{\sqrt{\pi}}\frac{\sqrt{\phi(1-\phi)}}{\sqrt{\phi}+\sqrt{1-\phi}}$, we have that the function $\dot\xi_{(\eta,1)_\eta}(y):=-\frac{1}{2}\dot g(y-\eta)I(g(y-\eta)>0)/\sqrt{g(y-\eta)}$ is squared integrable and $\mathbb{E}\dot{\xi}_{(\eta,1)_\eta}^2(y)\to \mathbb{E}\dot{\xi}_{(0,1)_\eta}^2(y)$ as $\eta\to 0$, which implies \Cref{lm2}$(iii)$. Moreover, since
\begin{equation*}
	\begin{aligned}
		\int y^2\dot\xi_{(0,1)_\eta}^2(y)dy=\frac{2}{\sqrt{\pi}}\frac{\sqrt{\phi(1-\phi)}}{\sqrt{\phi}+\sqrt{1-\phi}}\int[\phi y^3e^{-\phi y^2}I(y\ge 0)+(1-\phi)y^3e^{-(1-\phi)y^2}I(y<0)]dy<\infty,
	\end{aligned}
\end{equation*}
we have \Cref{lm2}$(iv)$ holds. Finally, for each $y$, by Taylor expansion of the function $s_\sigma(y)=\dot\xi_{(0,\sigma)_\eta}(y)$ with respect to $\sigma$ at 1, we have
\begin{equation*}
	s_\sigma(y) = s_{\sigma=1}(y)+(\sigma-1)\dot s_{\sigma=1}(y)+o(\lvert \sigma-1\rvert),
\end{equation*}
where $\dot s_{\sigma=1}(y)$ is square integrable, we have 
\begin{equation*}
	\begin{aligned}
		\lVert \dot\xi_{(0,\sigma)_\eta}(y)-\dot\xi_{(0,1)_\eta}(y)\lVert_2=\lVert (\sigma-1)\dot s_{\sigma=1}(y)+o(\lvert \sigma-1\rvert)\lVert_2=O(\lvert \sigma-1\rvert).
	\end{aligned}
\end{equation*}

Hence, \Cref{lm2}$(v)$ holds and the location-scale family $\{\frac{1}{\sigma}f(\frac{x-\eta}{\sigma}),\eta\in \mathbb{R},\sigma\in (0,+\infty)\}$ is Hellinger differentiable with respect to $(\eta,\sigma)$ at $(0,1)$. Therefore, \Cref{con2} is satisfied.

\end{proof}

%

\end{document}